\newtheorem{theorem}{Theorem}[section]
\newtheorem{lemma}[theorem]{Lemma}
\theoremstyle{definition}
\theoremstyle{remark}
\newtheorem*{remark}{Remark}
\numberwithin{equation}{section}
\newcommand\nutwid{\overset {\text{\lower 3pt\hbox{$\sim$}}}\nu}
\newcommand\newaa{\alpha}
\newcommand{\abs}[1]{\lvert#1\rvert}
\newcommand{\uhp}{\mathscr{H}}  %%upper half plane
\newcommand\FL[1]{\left\lfloor#1\right\rfloor}
\newcommand\stroke[2]{{#1}\,\left\arrowvert\,#2\right.}
\newcommand\abcdMAT{\begin{pmatrix} a & b \\ c & d \end{pmatrix}}
\newcommand\GoneMAT{\begin{pmatrix} 1 & * \\ 0 & 1 \end{pmatrix}}
\newcommand\TMAT{\begin{pmatrix} 1 & 1 \\ 0 & 1 \end{pmatrix}}
\newcommand\MAT[4]{\begin{pmatrix} {#1} & {#2} \\ {#3}  & {#4} \end{pmatrix}}
\newcommand\Z{\mathbb{Z}}
\newcommand\C{\mathbb{C}}
\newcommand\omyeqn[1]{(\ref{eq:#1})}
\newcommand\omycite[1]{}
\newcommand\omylem[1]{\ref{lem:#1}}
\newcommand\omythm[1]{\ref{thm:#1}}
\newcommand\thm[1]{\ref{thm:#1}}
\newcommand\lem[1]{\ref{lem:#1}}
\newcommand\eqn[1]{(\ref{eq:#1})}
\newcommand\sect[1]{\ref{sec:#1}}
\newcommand\subsect[1]{\ref{subsec:#1}}
\newcommand\mylabel[1]{\label{#1}}
\newcommand\omylabel[1]{\label{#1}}
\newcommand\mythm[1]{\ref{thm:#1}}
\newcommand{\beqs}{\begin{equation*}}
\newcommand{\eeqs}{\end{equation*}}
\newcommand{\beq}{\begin{equation}}
\newcommand{\eeq}{\end{equation}}
\renewcommand{\MR}[1]{\href{http://www.ams.org/mathscinet-getitem?mr={#1}}{MR{#1}}}
\DeclareMathOperator{\IM}{Im}
\DeclareMathOperator{\ORD}{Ord}
\DeclareMathOperator{\ord}{ord}
\begin{document}
%%PRELIMINARY VERSION
\title[Crank and rank parity function congruences]{Congruences modulo powers of $5$ and $7$\\
for the crank and rank parity functions and related mock theta functions}         

% Information for first author
\author{Dandan Chen}
\address{Department of Mathematics, Shanghai University, People's Republic of China}
\email{mathcdd@shu.edu.cn}
\author{Rong Chen}
\address{School of Mathematical Sciences, Tongji University,
Shanghai, People's Republic of China}
\email{rchen@stu.ecnu.edu.cn}
\author{Frank Garvan}
\address{Department of Mathematics, University of Florida, Gainesville,
FL 32611-8105}
%    Current address
%%\curraddr{Department of Mathematics and Statistics,
\email{fgarvan@ufl.edu}
\thanks{The first author was supported in part by the
National Natural Science Foundation of China (Grant No. 12201387) and Shanghai Sailing Program (\#21YF1413600).
The second author was supported in part by China Postdoctoral Science Foundation (\#2022M712422).
The third author was supported in part by a grant from
        the Simon's Foundation (\#318714).
The preliminary results
of this paper were first presented at the AMS Special Session
on Experimental and Computer Assisted Mathematics, Denver, January, 2020.
}
%    \thanks will become a 1st page footnote.

%    General info
\subjclass[2020]{05A17, 11F33, 11F37, 11P83, 33D15}
%%\subjclass[2010]{05A19} %%, 11B65, 11F11, 11F37, 11P83, 33D15}

%%\date{December 24, 2022}
\date{\today}                  

%%\dedicatory{Dedicated to the memory of A.O.L. (Oliver) Atkin}

\keywords{partition congruences, Dyson's rank, mock theta functions, modular functions, non-holomorphic modular functions, Atkin-Lehner involutions}

\begin{abstract}
It is well known that Ramanujan conjectured congruences modulo powers of
$5$, $7$ and and $11$ for the partition function. These were subsequently
proved by Watson (1938) and Atkin (1967). In 2009 Choi, Kang, and
Lovejoy proved congruences modulo powers of $5$ for the crank parity
function.  The generating function for the analogous rank parity function
is $f(q)$, the first example of a mock theta function that Ramanujan mentioned
in his last letter to Hardy. Recently we proved congruences modulo 
powers of $5$ for the rank parity function, and here we extend these
congruences for powers of $7$.
We also show how  
these congruences  imply congruences modulo powers of $5$ and $7$ for the coefficients
of the related third order mock theta function $\omega(q)$, using
Atkin-Lehner involutions and transformation results of Zwegers.
Finally we a prove a family of congruences modulo powers of $7$ for
the crank parity function.
\end{abstract}

\maketitle

%%%%%%%%%%%%%%%%%%%%%%%%%%%%%%%%%%%%%%%%%%%%%%%%%%%%%%%%%%%%%%%%%%%%%%%%%%%
%%\input body.tex
%SECTION 1%%%%%%%%%%%%%%%%%%%%%%%%%%%%%%%%%%%%%%%%%%%%%%%%%%%%%%%%%%%%%%%%
\section{Introduction}
\omylabel{sec:intro}
Let $p(n)$ be the number of unrestricted partitions of $n$.
Ramanujan discovered and later proved that
\begin{align}
p(5n+4) &\equiv 0 \pmod{5},\omylabel{eq:ram5} \\ %(1.1)
p(7n+5) &\equiv 0 \pmod{7},\omylabel{eq:ram7} \\ %(1.2)
p(11n+6) &\equiv 0 \pmod{11}.\omylabel{eq:ram11} %(1.3)
\end{align}
In 1944 Dyson \cite{Dy44}\omycite{Dy44}
defined the \textit{rank} of a partition as
the largest part minus the number of parts and conjectured
that residue of the rank mod $5$ (resp. mod $7$) divides the partitions
of $5n+4$ (resp. $7n+5$) into $5$ (resp. $7$) equal classes thus giving
combinatorial explanations of Ramanujan's partition congruences mod $5$
and $7$. Dyson's rank conjectures were proved by
Atkin and Swinnerton-Dyer \cite{At-SwD}\omycite{At-SwD}.
Dyson also conjectured the existence of another statistic he called the
\textit{crank} which would likewise explain Ramanujan's partition
congruence mod $11$. The crank was found by Andrews and the third author
\cite{An-Ga88}\omycite{An-Ga88}
who defined the \textit{crank} as the largest part, if the partition has no
ones,
and otherwise as the difference between the number of parts larger than the
number of ones and the number of ones.
%%\begin{equation}
%%\mbox{crank}(\pi) =
%%\begin{cases}
%%  \ell(\pi), &\mbox{if $\mu(\pi)=0$}, \\
%%  \nutwid(\pi) - \mu(\pi), &\mbox{if $\mu(\pi)>0$},
%%\end{cases}
%%\omylabel{eq:crankdef} %(1.6)
%%\end{equation}
%%where $\ell(\pi)$ denotes the largest part of $\pi$,
%%$\mu(\pi)$ denotes the number of ones in $\pi$ and $\nutwid(\pi)$
%%denotes the number of parts of $\pi$ larger than $\mu(\pi)$.

Let $M_e(n)$ (resp. $M_o(n)$) denote the number of partitions of $n$ with even (resp. odd) crank. Choi, Kang and Lovejoy \cite{Ch-Ka-Lo}\omycite{Ch-Ka-Lo} proved congruences
modulo powers of $5$ the \textit{crank parity function},
which is the difference
$$
M_e(n) - M_o(n).
$$
\begin{theorem}[{Choi, Kang and Lovejoy \cite[Theorem 1.1]{Ch-Ka-Lo}\omycite{Ch-Ka-Lo}}]
\omylabel{thm:crankthm}
For all $\newaa\ge0$ we have
$$
M_e(n) - M_o(n) \equiv 0 \pmod{5^{\newaa+1}},\qquad
\mbox{if $24n\equiv 1 \pmod{5^{2\newaa+1}}$}.
$$
\end{theorem}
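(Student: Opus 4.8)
The plan is to realize the crank parity generating function as an honest (weight $\tfrac12$) modular form and then to run the Watson--Atkin $U_5$-machinery on it; unlike the rank parity function, the crank parity function is modular, so no mock-modular input is needed here. First I would record the generating function. Writing $M(m,n)$ for the number of partitions of $n$ with crank $m$ and setting $z=-1$ in
\[
\sum_{n\ge0}\sum_m M(m,n)z^mq^n=\frac{\aqprod{q}{q}{\infty}}{\aqprod{zq}{q}{\infty}\aqprod{z^{-1}q}{q}{\infty}},
\]
then using $\aqprod{-q}{q}{\infty}=\aqprod{q^2}{q^2}{\infty}/\aqprod{q}{q}{\infty}$, gives
\[
C(q):=\sum_{n\ge0}\bigl(M_e(n)-M_o(n)\bigr)q^n=\frac{\aqprod{q}{q}{\infty}^{3}}{\aqprod{q^2}{q^2}{\infty}^{2}} .
\]
Since $q^{-1/24}C(q)=\eta(\tau)^3/\eta(2\tau)^2$, the substitution $q\mapsto q^{24}$ turns this into $F(\tau):=q\,C(q^{24})=\eta(24\tau)^3/\eta(48\tau)^2$, a holomorphic modular form of weight $\tfrac12$ with quadratic character on $\Gamma_0(N)$ for a suitable level $N$. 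The coefficient of $q^{24n+1}$ in $F$ is $M_e(n)-M_o(n)$, and the progression $24n\equiv1\pmod{5^{2\alpha+1}}$ singles out the coefficients of $F$ lying in one fixed residue class of the exponents modulo $5^{2\alpha+1}$.

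Next I would set up the operator machinery. Extracting a fixed residue class of exponents modulo $5$ is accomplished by a (possibly twisted) Atkin operator $U_5$, which maps modular forms on $\Gamma_0(N)$ to modular forms of the same weight on $\Gamma_0(N)$ (or $\Gamma_0(N/5)$). Iterating up to the modulus $5^{2\alpha+1}$ is carried out by repeatedly applying $U_5$, interspersed with multiplication by a fixed eta-quotient chosen so that after each step the surviving exponents again fall in the correct class and the image remains modular. I would define the resulting sequence of functions $L_\alpha$ and show, using the mapping properties of $U_5$, that every $L_\alpha$ lies in one fixed finite-dimensional space $V$ of modular functions for the relevant genus-zero group.

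The heart of the argument is the $5$-adic gain. Let $t$ be a hauptmodul for that genus-zero group, an eta-quotient adapted to the level, so that $V$ is spanned by finitely many explicit powers of $t$. I would derive a \emph{modular equation}: an identity expressing $U_5(t^{\,j}F)$ as an explicit polynomial in $t$ with integer coefficients, equivalently the matrix of the extraction operator in the chosen basis of $V$. The decisive claim, and the main obstacle, is the exact $5$-adic divisibility of these structure constants, namely that each passage from modulus $5^{2\alpha-1}$ to $5^{2\alpha+1}$ multiplies the relevant coefficient vector by a further factor of $5$. As in Watson's treatment of $p(n)$ and Atkin's refinements, I expect this to require two interlocking families of functions obeying a coupled vector recurrence, with the $5$-adic valuations tracked by induction; establishing the precise powers of $5$ (rather than mere integrality) is the genuinely delicate point.

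Finally I would close the induction on $\alpha$. The base case $\alpha=0$ is the congruence $M_e(n)-M_o(n)\equiv0\pmod5$ on the class $24n\equiv1\pmod5$, which drops out of the mod-$5$ reduction of the modular equation. For the inductive step, assuming the coefficients of $L_{\alpha-1}$ are divisible by $5^{\alpha}$, the extra factor of $5$ furnished by the recurrence forces the coefficients of $L_\alpha$ to be divisible by $5^{\alpha+1}$. Since these coefficients are, up to a $5$-adic unit, exactly the values $M_e(n)-M_o(n)$ on the progression $24n\equiv1\pmod{5^{2\alpha+1}}$, the theorem follows.
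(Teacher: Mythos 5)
A point of orientation first: the paper never proves this statement --- it is quoted from Choi, Kang and Lovejoy \cite{Ch-Ka-Lo} --- so the only internal benchmark is the parallel machinery the paper builds for its mod-$7$ results: Sections \ref{sec:rankparity5} and \ref{sec:crankparity7} prove Theorems \ref{thm:mainthm} and \ref{thm:crankthm7} by exactly the strategy you outline, namely weight-zero modular objects built from the parity generating function, a Hauptmodul $t$ for a genus-zero group, auxiliary functions $p_0,p_1$, an explicit modular equation (Theorem \ref{thm:modeq}), a finite list of ``fundamental relations'' for $U_7$ with exact $7$-adic valuations (Appendices \ref{funr-7} and \ref{funcr-7}), propagation lemmas (Lemmas \ref{lem1} and \ref{lem2}), and an induction on valuations (Lemma \ref{lem:L2a}); this is also, in substance, how Choi--Kang--Lovejoy proved the mod-$5$ statement. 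So your strategy is the right one, and your generating function $C(q)=J_1^3/J_2^2$ is correct (modulo the usual caveat that this product disagrees with the combinatorial crank at $n=1$, which is harmless here since $n=1$ lies in no progression $24n\equiv1\pmod 5$).

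The genuine gap is that the decisive step is asserted rather than proved. All of the content of the congruence resides in the exact $5$-adic valuations of the structure constants of $U_5$ in the chosen basis: in the paper's template this means (a) proving the degree-$5$ modular equation for the Hauptmodul, (b) verifying a finite list of relations of the form $U_5(p_i t^k)=\sum_{j}\sum_n c_{ij}(k,n)\,5^{e_{ij}(k,n)}p_j t^n$ with prescribed exponents $e_{ij}(k,n)$ (provable by the valence-formula algorithm of Section \ref{sec:modfuncs}), and (c) propagating these relations to all $k$ and running the induction showing membership in $5^{\alpha+1}X$ for a suitable coefficient-restricted set $X$. Your proposal explicitly defers all of (a)--(c) (``I expect this to require\dots'', ``the genuinely delicate point''); but without them the setup yields only integrality, not a single factor of $5$, so nothing is actually proved. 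Two further defects, fixable but symptomatic: your claim that every $L_\alpha$ lies in one fixed finite-dimensional space $V$ is false --- the $L_\alpha$ are polynomials in $t$ of unbounded degree, and the induction must control coefficient valuations rather than membership in a fixed space; and there is a normalization slip --- $q^{-1}C(q^{24})=\eta(24\tau)^3/\eta(48\tau)^2$, not $q\,C(q^{24})$, so this eta-quotient is only weakly holomorphic (its expansion at $\infty$ begins at $q^{-1}$), and the selected exponents are those with $24n-1\equiv0\pmod{5^{2\alpha+1}}$.
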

This gave a weak refinement of Ramanujan's partition congruence modulo
powers of $5$:
$$
p(n) \equiv 0 \pmod{5^a},\qquad
\mbox{if $24n\equiv 1 \pmod{5^\newaa}$}.
$$
Ramanujan's partition congruence
was proved by Watson \cite{Wa38}\omycite{Wa38}. We define
\beq
\beta(n) = M_e(n) - M_o(n),
\mylabel{eq:betadef}
\eeq
for $n\ge0$.  We prove the following new 
\begin{theorem}
\mylabel{thm:crankthm7}
For each $\newaa\ge1$ there is an integral constant $K_\newaa$ such that
\beq
\beta(49n - 2) \equiv K_\newaa\, \beta(n) \pmod{7^{\newaa}},\qquad
\mbox{if $24n\equiv 1 \pmod{7^{\newaa}}$}.
\mylabel{eq:betamod7}
\eeq
\end{theorem}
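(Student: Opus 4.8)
The plan is to exploit the fact that, unlike the rank parity function $f(q)$, the crank parity generating function is genuinely modular. Specialising the crank generating function at $z=-1$ gives
\[
\sum_{n\ge0}\beta(n)q^n=\frac{(q;q)_\infty}{(-q;q)_\infty^2}=\frac{(q;q)_\infty^3}{(q^2;q^2)_\infty^2}=q^{1/24}\,\frac{\eta(\tau)^3}{\eta(2\tau)^2},
\]
so after homogenising the exponents (replacing $q$ by $q^{24}$) one obtains
\[
\Phi(\tau):=\sum_{n\ge0}\beta(n)\,q^{24n-1}=\frac{\eta(24\tau)^3}{\eta(48\tau)^2},
\]
a weakly holomorphic modular form of weight $\tfrac12$ on a congruence group $\Gamma_0(N)$ with a suitable multiplier system. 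The first step is to record this eta-quotient representation and to pin down the group, the multiplier, and the order of $\Phi$ at each cusp.

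Second, I would identify the operator that produces $\beta(49n-2)$. The key elementary identity is
\[
24(49n-2)-1=49\,(24n-1),
\]
which shows that extracting $\beta(49n-2)$ from $\Phi$ is exactly the action of $U_{49}=U_7\circ U_7$ on the $q$-expansion. Tracking the two arithmetic progressions one checks $U_7\Phi=\sum_s\beta(7s+5)\,q^{24s+17}$ and then $U_7^2\Phi=\sum_m\beta(49m-2)\,q^{24m-1}$, the two intermediate stages corresponding to the Ramanujan-type congruences. Since $24m\equiv1\pmod{7^{\alpha}}$ is equivalent to $24m-1\equiv0\pmod{7^{\alpha}}$, the theorem is the assertion that, on the coefficients indexed by exponents divisible by $7^{\alpha}$, one has $U_{49}\Phi\equiv K_{\alpha}\,\Phi\pmod{7^{\alpha}}$. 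This mirrors the route of Choi--Kang--Lovejoy for powers of $5$ in Theorem~\thm{crankthm}; the essential difference is that for $7$ the relevant $U$-image is a nonzero multiple of $\Phi$ rather than being divisible outright, which is precisely why the constant $K_{\alpha}$ appears.

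Third comes the modular machinery. To avoid half-integral weight I would pass to modular functions of weight $0$ (dividing $\Phi$ by a fixed weight-$\tfrac12$ eta product and using the identity $U_7\bigl(f\cdot g(7\tau)\bigr)=g(\tau)\,U_7(f)\,$), and then use Atkin--Lehner involutions $W_Q$ to descend to a genus-zero group, where every function can be written as a polynomial with integer coefficients in a Hauptmodul $t$. One then derives a linear recurrence over $\mathbb Z[t]$ for the iterated images $U_7^{\,j}\Phi$, equivalently a matrix recurrence on a fixed basis, in which each application of $U_7$ contributes a controlled power of $7$. The constant $K_{\alpha}$ emerges as the $U_{49}$-eigenvalue on the one-dimensional piece supported on the progression $24m\equiv1\pmod{7^{\alpha}}$.

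Finally, the congruence \eqn{betamod7} should follow by induction on $\alpha$ from this recurrence, by tracking the $7$-adic valuations of the structure constants. I expect the main obstacle to be exactly this third step: writing down the explicit modular equation for $U_7$ acting on the relevant eta quotients and proving the sharp $7$-adic divisibility of its coefficients. For powers of $7$ the ambient space is larger and the recurrence longer than in the mod-$5$ case of Theorem~\thm{crankthm}, so controlling the valuations---and correctly isolating the \emph{unit} multiple $K_{\alpha}$ rather than an outright vanishing---will require the careful use of the Atkin--Lehner structure together with an explicit computation of the action of $U_7$.
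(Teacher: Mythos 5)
Your reduction is sound and is in fact the paper's own starting point: $\sum_{n\ge0}\beta(n)q^n=J_1^3/J_2^2$ is a genuine eta quotient, extracting $\beta(49n-2)$ amounts to two applications of $U_7$, and the theorem is equivalent to a proportionality $L_{\alpha+2}\equiv K_\alpha L_\alpha\pmod{7^\alpha}$ between iterated $U_7$-images (this is exactly \eqn{main1}). The genuine gap is in your third and fourth steps: you never supply a mechanism that can produce a \emph{proportionality} constant. An induction that ``tracks the $7$-adic valuations of the structure constants'' of a $U_7$-recurrence proves statements of the form ``$L_\alpha$ lies in $7^{\lfloor\cdot\rfloor}$ times an integral span,'' i.e.\ outright divisibility; that is precisely what is done for the rank parity function in Theorem \thm{mainthm}, and it cannot show that two \emph{different} sequences agree up to a unit scalar. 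Your substitute --- that ``$K_\alpha$ emerges as the $U_{49}$-eigenvalue on the one-dimensional piece'' --- is not an argument: $\Phi$ is not an eigenvector of $U_{49}$, and you exhibit no finite-dimensional $U_{49}$-stable space on which such an eigenvalue could be read off. What the paper actually does is write $L_\alpha=p_0P_0^{(\alpha)}(t)+p_1P_1^{(\alpha)}(t)+P_2^{(\alpha)}(t)$ with integer coefficients $l_{n,i}^{(\alpha)}$, and then prove, by an induction interleaved with the valuation bounds of Lemma \lem{L2a}, that every $2\times2$ minor $D^{(\alpha)}(l_{m,i},l_{n,j})=l_{m,i}^{(\alpha)}l_{n,j}^{(\alpha+2)}-l_{m,i}^{(\alpha+2)}l_{n,j}^{(\alpha)}$ has $7$-adic order at least $2\alpha-2+m+n$ plus nonnegative corrections (Lemma \lem{lemD}); since the leading coefficients $l_{0,0}^{(2\alpha-1)}$ and $l_{0,1}^{(2\alpha)}$ are $7$-adic units (from $l_{0,0}^{(1)}=2$, $U^{(0)}(p_0)\equiv3p_1\pmod 7$ and $U^{(1)}(p_1)\equiv2p_0\pmod 7$), the parallelism of the two coefficient vectors modulo $7^\alpha$ follows, and $K_\alpha$ is obtained by solving a single scalar congruence. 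This Wronskian-type bound on minors is the missing idea and the real content of the proof; nothing in your outline carries that second layer of $7$-adic information through the induction.

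Two smaller corrections. The group on which the relevant functions live is $\Gamma_0(14)$, which has genus one, so there is no Hauptmodul and no single polynomial ring $\mathbb{Z}[t]$ containing everything; the paper works in the $\mathbb{Z}[t]$-module spanned by $1$, $p_0$, $p_1$ (with $t=\eta(7\tau)^4/\eta(\tau)^4$ the Hauptmodul of $\Gamma_0(7)$) and must first prove that such representations are \emph{unique} (Lemma \lem{La}, via orders at the cusps $0$ and $1/2$) --- without this, comparing coefficients of $L_\alpha$ and $L_{\alpha+2}$ is meaningless. Also, Atkin--Lehner involutions play no role in this theorem; the paper uses them only to transfer the $f(q)$ congruences to $\omega(q)$ in Section \sect{w-5-7}. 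The concrete engine replacing all of this is the table of forty-two fundamental relations for $U^{(0)}(g)=U_7(g)$ and $U^{(1)}(g)=U_7(Ag)$ in Appendix \ref{funcr-7}, together with Lemmas \ref{lem1} and \ref{lem2}, which propagate those relations to all powers $t^k$ with the required $7$-adic lower bounds.
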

This gives a weak refinement of Ramanujan's partition congruence modulo
powers of $7$:
%%
%% Mention Atkins mod 7 ptn analog
%%
%%
%%
$$
p(n) \equiv 0 \pmod{7^\newaa},\qquad
\mbox{if $24n\equiv 1 \pmod{7^{\FL{\tfrac{a+2}{2}}}}$}.
$$
This was also proved by Watson \cite{Wa38}. 
The congruence in \eqn{betamod7} is reminiscent of 
Atkin and O'Brien's \cite{At-OB}
congruences mod powers of $13$ for the partition function.

In \cite{Ch-Ch-Ga20} we considered analogues of 
Theorem \thm{crankthm} for the rank parity function.
Analogous to $M_e(n)$ and $M_o(n)$ we let $N_e(n)$ (resp. $N_o(n)$) denote the number of partitions of $n$ with even
(resp. odd) rank. It is well known that the difference is related to
Ramanujan's mock theta function $f(q)$. This is the first example of
a mock theta function that Ramanujan gave in his last letter to
Hardy. Let
%%> with(qseries):
%%> with(ramamocktheta):
%%> f3q:=getmockqs(f,3):
%%> series(f3q,q,20);
%%           2      3      4      5      6      7      8      9       10       11
%%1 + q - 2 q  + 3 q  - 3 q  + 3 q  - 5 q  + 7 q  - 6 q  + 6 q  - 10 q   + 12 q
%%           12       13       14       15       16       17       18       19
%%     - 11 q   + 13 q   - 17 q   + 20 q   - 21 q   + 21 q   - 27 q   + 34 q   +
%%       20
%%    O(q  )
\begin{align*}
  f(q) &= \sum_{n=0}^\infty a_f(n) q^n = 1 + \sum_{n=1}^\infty
\frac{q^{n^2}}{(1+q)^2(1+q^2)^2 \cdots (1+q^n)^2}\\
       &=
1+q-2\,{q}^{2}+3\,{q}^{3}-3\,{q}^{4}+3\,{q}^{5}-5\,{q}^{6}+7\,{q}^{7}-
6\,{q}^{8}+6\,{q}^{9}-10\,{q}^{10}+12\,{q}^{11}-11\,{q}^{12}+
 \cdots.
\end{align*}
This function has been studied by many authors. Ramanujan conjectured
an asymptotic formula for the coefficients $a_f(n)$. Dragonette
\cite{Dr52}\omycite{Dr52} improved this result by finding a Rademacher-type asymptotic
expansion for the coefficients. The error term was subsequently improved
by Andrews \cite{An66a}\omycite{An66a}, Bringmann and Ono \cite{Br-On06}\omycite{Br-On06}, and Ahlgren
and Dunn \cite{Ah-Du19}\omycite{Ah-Du19}. We have
$$
a_f(n) = N_e(n) - N_o(n),
$$
for $n\ge0$.
%%%%%%%%%%%%%%%%%%%%%%%%%%%%%%%%%%%%%%%%%%%%%%%%%%%%%%%%%%%%%%%%%%%%%%%%%%%%%%%%
%%> af3:=n->if type(n,integer) and n>=0  then f3A[n] else 0 fi:
%%> L5:=(a)->modp(1/24,5^a):
%%> f5cong:=(a)->[seq(vp(af3(5^a*m+L5(a)) + af3(5^(a-2)*m+L5(a-2)),5),
%%   m=0..trunc(500*10^3/5^a)-1)]:
%%> [seq([k,min(f5cong(k))],k=3..8)];
%%        [[3, 1], [4, 2], [5, 2], [6, 3], [7, 3], [8, 4]]
%%%%%%%%%%%%%%%%%%%%%%%%%%%%%%%%%%%%%%%%%%%%%%%%%%%%%%%%%%%%%%%%%%%%%%%%%%%%%%%%

In \cite[Theorem 1.2 ]{Ch-Ch-Ga20}\omylabel{Ch-Ch-Ga20}, we proved
\begin{theorem}
\mylabel{thm:rankpar5}
For all $\alpha\ge3$ and all $n\ge 0$ we have
\begin{align}
\mylabel{eq:rmod5}
a_f(5^{\alpha}n + \delta_\alpha)
+ a_f(5^{\alpha-2}n + \delta_{\alpha-2})
\equiv 0 \pmod{5^{ \FL{\tfrac{1}{2}\alpha }}},
\end{align}
  where $\delta_\alpha$ satisfies $0 < \delta_\alpha < 5^\alpha$ and
$24\delta_\alpha\equiv1\pmod{5^\alpha}$.
\end{theorem}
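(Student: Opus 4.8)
The statement is equivalent to the single $q$-series congruence
\begin{equation*}
\mathcal{A}_\alpha(q) + \mathcal{A}_{\alpha-2}(q) \equiv 0 \pmod{5^{\FL{\alpha/2}}},
\qquad \mathcal{A}_k(q) := \sum_{n\ge 0} a_f\Lpar{5^k n + \delta_k}\, q^n,
\end{equation*}
since matching the coefficient of $q^n$ on both sides recovers \eqn{rmod5}. The plan, in the spirit of the crank-parity argument of Choi, Kang and Lovejoy (Theorem~\thm{crankthm}), is to realize each $\mathcal{A}_k$ as the image of one fixed (mock) modular object under $k$ applications of the operator $U_5$, and then to bound the $5$-adic size of the resulting coefficients by studying how $U_5$ acts on a finite module of modular functions.

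To set up the modularity, write $q = e^{2\pi i \tau}$ with $\tau \in \uhp$ and pass to $\mathcal{F}(\tau) = q^{-1/24} f(q)$. By the transformation results of Zwegers, $\mathcal{F}$ is the holomorphic part of a harmonic Maass form of weight $\tfrac12$ whose shadow is a unary theta function of weight $\tfrac32$. The normalization is chosen so that the $q$-exponents are $(24n-1)/24$; the hypothesis $24\delta_k \equiv 1 \pmod{5^k}$ then says precisely that the terms with $n \equiv \delta_k \pmod{5^k}$ are exactly those whose exponent-numerator $24n-1$ is divisible by $5^k$. Hence isolating $\mathcal{A}_k$ amounts to sieving the exponents modulo $5^k$, and this sieving is effected by $k$-fold application of $U_5$ (with the $\eta$-multiplier absorbed). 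After multiplying by a suitable power of $\eta$ to clear the half-integral weight and the multiplier, the resulting functions become modular functions on $\Gamma_0(25)$ with a character, and the Atkin--Lehner involution $W_5$ relates their expansions at the two relevant cusps.

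The arithmetic heart is a finiteness statement: all of these functions lie in a finitely generated module over $\Z[t]$, where $t$ is a Hauptmodul of $X_0(5)$, and $U_5$ maps this module into itself. I would exhibit an explicit basis, compute the matrix of $U_{25} = U_5 \circ U_5$ in that basis, and read off the $5$-adic valuations of its entries. The index shift by $2$ in the theorem is dictated by $U_{25}$: applied to the basis it returns to the same module, and its reduction modulo $5$ is $-1$ times the identity (up to terms divisible by $5$), which yields a two-term recursion of the shape $\mathcal{A}_\alpha \equiv -\mathcal{A}_{\alpha-2}$ together with a gain of one factor of $5$ at every second step. Since $\FL{\alpha/2} - \FL{(\alpha-2)/2} = 1$, an induction on $\alpha$ with base cases $\alpha = 3,4$ (checked directly) produces exactly the modulus $5^{\FL{\alpha/2}}$.

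The main obstacle is the \emph{mock}, i.e.\ non-holomorphic, nature of $f$. Because $\mathcal{F}$ is not genuinely modular, $U_5$ does not preserve holomorphicity: each application drags along the period integral of the shadow theta function. The decisive point---and, I expect, the reason the theorem is stated for the two-term sum $\mathcal{A}_\alpha + \mathcal{A}_{\alpha-2}$ rather than for $\mathcal{A}_\alpha$ alone---is that in this particular combination the non-holomorphic contributions cancel, so that $\mathcal{A}_\alpha + \mathcal{A}_{\alpha-2}$ is the honest holomorphic $q$-expansion of a modular function on $\Gamma_0(25)$. Establishing this cancellation rigorously, using Zwegers' explicit transformation law for the shadow together with its sparse support (the shadow has exponent-numerators of the form $(6n+1)^2$, so after sieving modulo $5^\alpha$ only the few terms with $6n+1 \equiv 0 \pmod{5^{\CL{\alpha/2}}}$ survive), is where the real work lies; once the combination is known to be genuinely modular, the $U_5$-module argument above applies without change.
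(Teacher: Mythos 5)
Your proposal has the same two--stage architecture as the actual proof, but the stages are realized differently, and two details would need repair. First, note that this paper does not reprove Theorem \thm{rankpar5}: it is quoted from \cite[Theorem 1.2]{Ch-Ch-Ga20}, and that proof's method is the one displayed in Section \sect{rankparity5} here for powers of $7$. That method opens with a single \emph{exact} eta-quotient identity for the two-term combination at the bottom level: for $p=5$ it is \eqn{f5q}, asserting that $\stroke{qf(q)}{U_5}+f(q^5)$, suitably normalized, \emph{is} an eta quotient, proved in \cite{Ch-Ch-Ga20} by Lambert-series dissection exactly as the $p=7$ analogue (Theorem \thm{af7thm}) is proved here. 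Once that identity is in hand, mock modularity never reappears: every pair $(\alpha,\alpha-2)$ in the theorem arises exactly --- not merely up to congruence --- by applying iterated twisted $U_5$ operators to this one genuine modular function, and the whole induction lives in spaces of honest modular functions (the $X_A$, $X_B$ of \cite{Ch-Ch-Ga20}; compare \eqn{7-L2a} and the spaces $X_a$, $X_b$ in Section \sect{w-5-7}). You instead propose to prove modularity of each combination $\mathcal{A}_\alpha+\mathcal{A}_{\alpha-2}$ separately via cancellation of the shadow contributions. That cancellation is real, and it is precisely what the paper formalizes in Lemma \ref{lemUpG} and Theorem \thm{f0Up}: $\stroke{G}{U_5^*}=\chi_6(5)\,G(5\tau)=-G(5\tau)$, so two applications of $U_5$ send the period integral $G$ to $-G$, which is your pairwise cancellation; the Remark after Theorem \thm{af7thm} records that this Zwegers route also yields the base identity. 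So your stage one is the paper's acknowledged alternative (with the advantage of working uniformly for all primes $p\ge5$), while the paper's choice --- prove the identity once, then stay holomorphic forever --- makes your per-$\alpha$ support-sieving argument, which you identify as the hard part, unnecessary.

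Two corrections for stage two. The natural level is not $\Gamma_0(25)$, and the ring is not generated by a Hauptmodul of $X_0(5)$: the factors $(-q;q)_n$ in $f(q)$ force even levels (the completion $H_0$ lives on $\Gamma_0(4)$, and the functions entering the induction live on $\Gamma_0(20)$, built from $J_1,J_2,J_4,J_5,J_{10},J_{20}$). More seriously, the modulus $5^{\FL{\alpha/2}}$ does not come from a matrix congruence of the form $U_{25}\equiv -I\pmod{5}$; no such statement appears in the proof, and indeed $\mathcal{A}_\alpha$ by itself is not in the module at all --- only the sum is, so a ``recursion for $\mathcal{A}_\alpha$'' is the conclusion, not an available lemma. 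What actually drives the induction is entry-wise $5$-adic lower bounds of the shape $5^{\FL{(3k-3)/4}}$ on the coefficient of $t^k$ in the images of basis elements under the twisted operators, verified by explicit computation for finitely many $k$ and propagated to all $k$ by induction lemmas of the type of Lemmas \ref{lem1} and \ref{lem2}; those bounds produce the gain of one factor of $5$ per two applications of $U_5$ and hence the modulus $5^{\FL{\alpha/2}}$. Your instinct to ``read off the $5$-adic valuations of the entries'' is the correct one; the ``$-I$ reduction'' gloss is both unsupported and unneeded.
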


In \cite{Ch-Ch-Ga20}, we also stated the following theorem without proof.
\begin{theorem}
\mylabel{thm:mainthm}
For all $\alpha\ge3$ and all $n\ge 0$ we have
\begin{align}
\mylabel{eq:rmod7}
a_f(7^{\alpha}n + \delta_\alpha)
- a_f(7^{\alpha-2}n + \delta_{\alpha-2})
\equiv 0 \pmod{7^{ \FL{\tfrac{1}{2}(\alpha-1) }}},
\end{align}
  where $\delta_\alpha$ satisfies $0 < \delta_\alpha < 7^\alpha$ and
$24\delta_\alpha\equiv1\pmod{7^\alpha}$.
\end{theorem}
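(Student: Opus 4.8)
The plan is to follow the template of the mod-$5$ result, Theorem \thm{rankpar5}, from \cite{Ch-Ch-Ga20}, replacing the prime $5$ by $7$ throughout and carefully retracking the resulting gain of $7$-divisibility. The one genuine obstruction compared with the crank parity function, whose generating function $\tfrac{(q;q)_\infty}{(-q;q)_\infty^2}$ is an honest eta-quotient, is that the rank parity generating function $f(q)$ is only \emph{mock} modular. So the first step is to make a modular object out of it: writing $q=e^{2\pi i\tau}$, the normalized series $q^{-1/24}f(q)$ extends by Zwegers' work to a weight $\tfrac12$ harmonic Maass form whose shadow is a unary theta series of weight $\tfrac32$. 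The coefficients $a_f(7^\alpha n+\delta_\alpha)$ are isolated by a $U_7$-type sieve corresponding to the progression $24n\equiv1\pmod{7^\alpha}$; the crucial point is that on this progression the non-holomorphic (shadow) contribution is controlled — it is supported on the sparse square-classes $(6m+1)^2\equiv1\pmod{7^\alpha}$ and contributes only terms already divisible by the requisite power of $7$ — so that, after twisting by a suitable power of $\eta$, the extracted series becomes a genuinely (weakly holomorphic) modular form on $\Gamma_0(N)$ for an explicit $N$ divisible only by $2$, $3$, and $7$. Equivalently, one may bypass the completion and use the classical Atkin--Swinnerton-Dyer rank dissection identities to exhibit the base-case dissections directly in terms of modular forms.

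With a modular realization in hand, I would set up a sequence of weakly holomorphic modular forms $L_\alpha(\tau)$ encoding $a_f(7^\alpha n+\delta_\alpha)$, normalized so that each $L_\alpha$ has poles only at the cusp $\infty$. Using the theory of the Atkin--Lehner operators $U_7$ and $U_{49}=U_7^2$, one shows that all of the $L_\alpha$ lie in a single finite-dimensional space $V$ of such forms that is stable under $U_{49}$, and that $V$ has an explicit basis of eta-quotients. The passage $\delta_{\alpha-2}\mapsto\delta_\alpha$ along $7^2n$ is then realized as the restriction of $U_{49}$ to $V$ (up to an explicit multiplier coming from the $\delta_\alpha$-shift), so the theorem is reduced to an assertion about a single linear operator on a fixed space.

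The heart of the matter is then the recurrence
\begin{equation*}
L_\alpha \;-\; c\,L_{\alpha-2}\;\equiv\;0 \pmod{7^{\FL{\tfrac12(\alpha-1)}}},
\end{equation*}
where the sign $c=+1$ — which produces the minus sign in \eqn{rmod7} — is forced by the relevant $U_7$-eigenvalue being $\equiv1\pmod7$, in contrast to the $5$-adic case of \eqn{rmod5}, where the eigenvalue instead produces the plus sign. I would prove this by induction on $\alpha$: the base cases $\alpha=3,4$ are verified directly by computing the matrix of $U_{49}$ on $V$ modulo a small power of $7$, and the inductive step is supplied by the fact that, relative to a well-chosen eta-quotient basis, this matrix is triangular with off-diagonal entries divisible by $7$, so that two applications of $U_{49}$ gain exactly one further factor of $7$.

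The main obstacle I expect is the precise $7$-adic valuation bookkeeping required to land on the exponent $\FL{\tfrac12(\alpha-1)}$ rather than something weaker: this demands choosing the basis of $V$ so that the $U_{49}$-matrix has exactly the triangular-with-$7$-divisible-off-diagonal shape above, and verifying that no cancellation degrades the gain as $\alpha$ grows. A secondary but essential technical point is rigorously justifying the modular reduction of the first paragraph — that the shadow's contribution on the progression is negligible modulo the relevant power of $7$, and that the resulting $L_\alpha$ are holomorphic at every cusp other than $\infty$ — since both are needed to guarantee the finite-dimensionality of $V$ and hence the reduction of the whole theorem to a single operator.
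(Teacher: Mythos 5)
There is a genuine structural gap at the center of your plan. You treat the single sieved series $\sum_n a_f(7^{\alpha}n+\delta_\alpha)q^n$ as something that can be made weakly holomorphic modular (``after twisting by a suitable power of $\eta$''), place these objects $L_\alpha$ in a finite-dimensional $U_{49}$-stable space $V$ with an eta-quotient basis, and then read the theorem as a recurrence $L_\alpha\equiv c\,L_{\alpha-2}$. But these individual series are still mock: no eta-twist removes the nonholomorphic part, and your justification --- that the shadow ``contributes only terms already divisible by the requisite power of $7$'' --- is not even well-posed, since the shadow's contribution is a real-analytic Eichler integral with incomplete-gamma coefficients, not an integer $q$-series to which divisibility applies. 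What is actually true, and is the engine of the paper's proof, is an \emph{exact} cancellation: by Lemma \ref{lemUpG}, $\stroke{G}{U_7^*}=\chi_6(7)G(7\tau)$ identically, so it is precisely the combination $\stroke{q^2f(q)}{U_7}-f(q^7)$ --- i.e.\ the generating function of the difference $a_f(7n-2)-a_f(n/7)$ --- whose mock parts cancel and which equals an explicit eta-theta quotient (Theorem \thm{af7thm}, equivalently \eqn{f7q}; the paper's primary proof of this is via Watson's Lambert series and Chan's identities, with the Zwegers route you sketch noted as an alternative through Theorem \thm{f0Up}). So the difference in \eqn{rmod7} is not a relation between two separately modular objects to be compared through a $U_{49}$-eigenvalue; it is itself the coefficient family of \emph{one} modular function, and the theorem asserts that the iterated $U_7$-images of that function vanish modulo increasing powers of $7$. (Your sign heuristic is morally right but for the wrong reason: the minus sign in \eqn{rmod7} versus the plus sign in \eqn{rmod5} comes from the character values $\chi_6(7)=+1$ versus $\chi_6(5)=-1$ in this exact cancellation, not from a $U_7$-eigenvalue congruence mod $7$.)

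Your inductive mechanism also would not produce the exponent $\FL{\tfrac{1}{2}(\alpha-1)}$. There is no fixed finite-dimensional $V$ on which the whole orbit lives with a triangular matrix doing the work: the paper's objects are Laurent polynomials in the Hauptmodul $t=\eta(7\tau)^4/\eta(\tau)^4$ times $1$, $p_0$, $p_1$, and the divisibility gain is extracted from a two-parameter valuation pattern --- the coefficient of $t^n$ in the image of $t^k$ has $7$-adic order at least $\FL{(7n-k+r)/4}$ --- established by the forty-two fundamental relations of Appendix \ref{funr-7} and propagated to all $k$ via the modular equation (Theorem \thm{modeq}, Lemma \ref{lem:fun7}, Lemmas \ref{lem1} and \ref{lem2}). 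The growth then comes from the module maps $U_A:X_A\to X_B$ and $U_B:X_B\to 7X_A$, i.e.\ one factor of $7$ per \emph{pair} of $U_7$-applications; your statement that ``two applications of $U_{49}$ gain exactly one further factor of $7$'' is off by a factor of two and would only yield an exponent near $\alpha/4$. Finally, ``triangular with off-diagonal entries divisible by $7$'' is the wrong structure altogether: such an operator is diagonal mod $7$, and iterating it on $L_0$ produces no growth of $7$-divisibility unless the diagonal itself is divisible by $7$; base cases $\alpha=3,4$ plus that shape cannot replace the valuation bookkeeping above.
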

The starting point of our proof of Theorem \thm{mainthm} is an
eta-product identity for the generating function of
$$
a_f(n/7)-a_f(7n-2).
$$
See Theorem \omythm{af7thm}. This enables us to
use the theory of modular functions to prove the  congruences. 
Our presentation and method
are similar to that Paule and Radu \cite{Pa-Ra12}\omycite{Pa-Ra12}, 
who solved a
difficult conjecture of Sellers \cite{Se1994} for congruences 
modulo powers of $5$
for Andrews's two-colored generalized Frobenius 
partitions \cite{An1984mem}.

The goal of this paper is prove Theorems \omythm{crankthm7} and 
\omythm{mainthm}, as well as prove analogous congruences for Ramanujan's
third order mock theta function $\omega(q)$:
\begin{align*}
\omega(q)=&\sum_{n=0}^{\infty}a_w(n)q^n=1+\sum_{n=1}^{\infty}\frac{q^{2n^2+2n}}{(1-q)^2(1-q^3)^2\cdots(1+q^{2n-1})^2}\\
=&1+q^4+2q^5+3q^6+4q^7+5q^8+6q^9+7q^{10}+8q^{11}+10q^{12}+\cdots.
\end{align*}
By utilising an Atkin-Lehner involution we show the following theorem
follows from Theorems \mythm{rankpar5} and \mythm{mainthm}.  
%%But noting the relations between $f(q)$ and $\omega(q)$ in \omyeqn{fwr}, it will be a new proof which is more simpler.
%%
%%
%%
%%Below in Section \subsect{genfunc} we show that the generating function for
%%$$
%%a_f(n/7)-a_f(7n-2),
%%$$
%%is a linear combination of two eta-products. See Theorem \omythm{af7thm}. This enables us to
%%use the theory of modular functions to obtain congruences. Our presentation and method
%%are similar to that Paule and Radu \cite{Pa-Ra12}\omycite{Pa-Ra12}, who solved a
%%difficult conjecture of Sellers \cite{Se1994} for congruences modulo powers of $5$
%%for Andrews's two-colored generalized Frobenious partitions \cite{An1984mem}.
%%
%%
%%There are similar congruences of Ramanujan's mock theta function $\omega(q)$ which can be proved with the same method of $f(q)$.
%%Let
%%\begin{align*}
%%w(q)=&\sum_{n=0}^{\infty}a_w(n)q^n=1+\sum_{n=1}^{\infty}\frac{q^{2n^2+2n}}{(1-q)^2(1-q^3)^2\cdots(1+q^{2n-1})^2}\\
%%=&1+q^4+2q^5+3q^6+4q^7+5q^8+6q^9+7q^{10}+8q^{11}+10q^{12}+\cdots.
%%\end{align*}
%%But noting the relations between $f(q)$ and $\omega(q)$ in \omyeqn{fwr}, it will be a new proof which is more simpler.

\begin{theorem} 
\mylabel{thm:mainthm-2}
  \begin{enumerate}
    \item[(i)]
For all $\alpha\ge3$ and all $n\ge 0$ we have
\begin{align}
\mylabel{eq:wrmod5}
a_\omega(5^{\alpha}n + \delta_\alpha)
+ a_\omega(5^{\alpha-2}n + \delta_{\alpha-2})
\equiv 0 \pmod{5^{ \FL{\tfrac{1}{2}\alpha }}},
\end{align}
  where $\delta_\alpha$ satisfies $0 < \delta_\alpha < 5^\alpha$ and
$3\delta_\alpha+2\equiv0\pmod{5^\alpha}$.
\item[(ii)]
For all $\alpha\ge3$ and all $n\ge 0$ we have
\begin{align}
\mylabel{eq:wrmod7}
a_\omega(7^{\alpha}n + \delta_\alpha)
+ a_\omega(7^{\alpha-2}n + \delta_{\alpha-2})
\equiv 0 \pmod{7^{ \FL{\tfrac{1}{2}(\alpha-1) }}},
\end{align}
  where $\delta_\alpha$ satisfies $0 < \delta_\alpha < 7^\alpha$ and
$3\delta_\alpha+2\equiv0\pmod{7^\alpha}$.
  \end{enumerate}
\end{theorem}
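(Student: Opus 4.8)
The plan is to deduce both parts of Theorem~\thm{mainthm-2} from the rank-parity congruences for $f(q)$ that are already in hand — Theorem~\thm{rankpar5} for powers of $5$ and Theorem~\thm{mainthm} for powers of $7$ — by exploiting the classical modular relationship between the two third order mock theta functions $f$ and $\omega$. Recall that, after the normalisations $q^{-1/24}f(q)$ and $q^{2/3}\omega(q)$, Zwegers' completions $\widehat f$ and $\widehat\omega$ are the holomorphic parts of harmonic Maass forms of weight $1/2$ on a congruence subgroup of level divisible by $24$, and that these two forms are interchanged, up to an explicit automorphy factor, by an Atkin--Lehner involution $W$ on the relevant modular curve. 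First I would record this transformation as an identity of $q$-series relating the generating functions $\sum a_\omega(n)q^{n}$ and $\sum a_f(n)q^{n}$. The non-holomorphic Eichler-integral pieces of $\widehat f$ and $\widehat\omega$ have unary theta functions as shadows, and under $W$ these shadows correspond to one another, so the holomorphic parts are matched up to the automorphy factor alone.

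The second step is to convert this functional identity into a correspondence of Fourier coefficients along arithmetic progressions. Since $W$ permutes the cusps, it carries the progression $24\delta\equiv1\pmod{p^{\alpha}}$ governing $f$ (whose completion has anomaly $-\tfrac1{24}$) to the progression $3\delta+2\equiv0\pmod{p^{\alpha}}$ governing $\omega$ (anomaly $\tfrac23$): both conditions are the single requirement that $24(\delta+\text{anomaly})\equiv0\pmod{p^{\alpha}}$. Carrying this book-keeping through the $q$-series identity should yield, for every $\alpha$, a relation of the shape
\begin{equation*}
a_\omega(p^{\alpha}n+\delta_\alpha^{\omega}) \;=\; \eta_{p,\alpha}\, a_f(p^{\alpha}n+\delta_\alpha^{f}),
\end{equation*}
where $\delta_\alpha^{\omega}$ and $\delta_\alpha^{f}$ solve the two respective congruence conditions and $\eta_{p,\alpha}=\pm1$ is the sign produced by the automorphy factor of $W$ in weight $1/2$.

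The decisive point is the behaviour of this sign. Because $W$ acts in half-integral weight, $\eta_{p,\alpha}$ is essentially a Gauss-sum sign whose square equals $\leg{-1}{p}=(-1)^{(p-1)/2}$, so that $\eta_{p,\alpha}/\eta_{p,\alpha-2}=\leg{-1}{p}$, which is $+1$ for $p=5$ and $-1$ for $p=7$. Feeding the displayed identity into Theorem~\thm{rankpar5} then gives $a_\omega(5^{\alpha}n+\delta_\alpha^{\omega})+a_\omega(5^{\alpha-2}n+\delta_{\alpha-2}^{\omega})=\eta_{5,\alpha}\bigl[a_f(5^{\alpha}n+\delta_\alpha^{f})+a_f(5^{\alpha-2}n+\delta_{\alpha-2}^{f})\bigr]\equiv0\pmod{5^{\FL{\alpha/2}}}$, which is part~(i) with the same $+$ sign as for $f$. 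Feeding it into Theorem~\thm{mainthm}, the equality $\eta_{7,\alpha}=-\eta_{7,\alpha-2}$ converts the minus sign of the $f$-congruence into a plus sign, since $a_\omega(7^{\alpha}n+\delta_\alpha^{\omega})+a_\omega(7^{\alpha-2}n+\delta_{\alpha-2}^{\omega})=\eta_{7,\alpha}\bigl[a_f(7^{\alpha}n+\delta_\alpha^{f})-a_f(7^{\alpha-2}n+\delta_{\alpha-2}^{f})\bigr]\equiv0\pmod{7^{\FL{(\alpha-1)/2}}}$, giving part~(ii). In both cases the modulus is inherited unchanged because $\eta_{p,\alpha}=\pm1$ is a unit.

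I expect the main obstacle to be the rigorous control of the non-holomorphic completions: one must verify that the Eichler integrals attached to $\widehat f$ and $\widehat\omega$ are genuinely exchanged by $W$ with no residual contribution on the progressions in question, so that the purely holomorphic coefficient identity above is exact rather than approximate. Pinning down the automorphy factor $\eta_{p,\alpha}$ in half-integral weight — and hence the precise parity-dependent sign on which the $+$/$-$ dichotomy between the two primes rests — is the delicate computation that drives the whole reduction.
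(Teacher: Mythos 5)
Your headline idea --- pass from $f$ to $\omega$ using Zwegers' completions and an Atkin--Lehner involution --- is indeed the paper's starting point (Lemma \ref{lemUpG}, Theorems \thm{f0Up} and \thm{th2}). But the step your entire reduction rests on is false: an Atkin--Lehner involution does \emph{not} induce a term-by-term correspondence of Fourier coefficients. The map $W(p)$ is a fractional linear change of variable permuting the cusps of $\Gamma_0(4p^2)$; the $q$-expansion at $\infty$ of $\stroke{g}{W(p)}$ records the behaviour of $g$ at the cusp $W(p)\infty$, and is not a re-indexed, sign-twisted copy of the expansion of $g$ at $\infty$. Concretely, your claimed identity $a_\omega(p^{\alpha}n+\delta_\alpha^{\omega})=\eta_{p,\alpha}\,a_f(p^{\alpha}n+\delta_\alpha^{f})$ with $\eta_{p,\alpha}=\pm1$ already fails at $p=5$, $\alpha=1$, $n=0$: there $\delta_1^{\omega}=1$, $\delta_1^{f}=4$, and $a_\omega(1)=0$ while $a_f(4)=-3$. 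This is visible inside the paper itself: the two sides of the Atkin--Lehner relation \eqn{fwr} at $p=5$ are the identities \eqn{f5q} and \eqn{genw}, which are \emph{different} eta-quotients with different coefficient sequences, even though one is the image of the other under $W(5)$. Your sign bookkeeping is also internally inconsistent: a real sign $\eta_{p,\alpha}=\pm1$ cannot have square $\leg{-1}{7}=-1$; in the paper the $\pm$ signs trace back to the value $\chi_6(p)$ in the combination $\stroke{F}{U_p^*}-\chi_6(p)F(p\tau)$ and to the arithmetic of the progressions, not to a half-integral-weight Gauss-sum factor.

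What the paper actually transports through $W(5)$ is not the coefficient congruences of Theorems \thm{rankpar5}/\thm{mainthm} but the modular-function framework underlying their proofs. One defines the $\omega$-side iterates $K_0:=P_a$, $K_{2\alpha+1}=\stroke{aK_{2\alpha}}{U_5}$, $K_{2\alpha+2}=\stroke{bK_{2\alpha+1}}{U_5}$, and proves $K_\alpha=\stroke{L_\alpha}{W(5)}$ by induction using the commutation $\stroke{(\stroke{f}{U_p})}{W_e}=\stroke{(\stroke{f}{W_e})}{U_p}$ of Lemma \ref{lemuw}; because $W(5)$ carries the building blocks $t$, $P_A$, $P_B$ to $t_\omega$, $P_a$, $P_b$, the memberships $L_{2\alpha}\in 5^{\alpha}X_A$ --- the real content of the proof for $f$, strictly stronger than the congruence statement --- become $K_{2\alpha}\in 5^{\alpha}X_a$, and the $\omega$-congruences are then read off from the generating-function interpretation \eqn{genw} of the $K_\alpha$. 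So divisibility transfers because the expansions in terms of these building blocks (with their explicit powers of $5$) transfer, even though individual Fourier coefficients do not; nothing in your proposal supplies this mechanism, and without it the reduction to the $f$-congruences does not go through. A smaller point: the obstacle you flag at the end, controlling the Eichler integrals, is resolved in the paper \emph{before} $W(p)$ ever enters, by Lemma \ref{lemUpG}: since $\stroke{G}{U_p^*}=\chi_6(p)G(p\tau)$, the combination $\stroke{F}{U_p^*}-\chi_6(p)F(p\tau)$ is already free of the non-holomorphic part, which is exactly what makes Theorems \thm{f0Up} and \thm{th2} statements about weakly holomorphic modular functions.
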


\begin{remark}
We note that Karl-Heine Fricke \cite{Fr13} independently observed 
 \omyeqn{rmod5}--\omyeqn{wrmod7} but without proof.
\end{remark}

In Section \sect{modfuncs} we include the necessary background and 
algorithms from the theory
of modular functions for proving identities. In 
Sections \sect{rankparity5}--\sect{w-5-7} we apply the
theory of modular functions to prove our 
Theorems \thm{crankthm7}, \thm{mainthm} and \thm{mainthm-2}.

\subsection*{Some Remarks and Notation}
\omylabel{subsec:notation}
Throughout this paper we use the standard $q$-notation.
For finite products we use
$$
(z;q)_n=(z)_n=
\begin{cases}
{\displaystyle\prod_{j=0}^{n-1}(1-zq^j)}, & n>0 \\
1,                         & n=0.
\end{cases}
$$
For infinite products we use
$$
(z;q)_\infty=(z)_\infty = \lim_{n\to\infty} (z;q)_n
=\prod_{n=1}^\infty (1-z q^{n-1}),
$$
$$
(z_1,z_2,\dots,z_k;q)_\infty = (z_1;q)_\infty (z_2;q)_\infty
\cdots (z_k;q)_\infty,
$$
$$
[z;q]_\infty = (z;q)_\infty (z^{-1}q;q)_\infty=
\prod_{n=1}^\infty (1-z q^{n-1})(1-z^{-1}q^{n}),
$$
$$
[z_1,z_2,\dots,z_k;q]_\infty = [z_1;q]_\infty [z_2;q]_\infty
\cdots [z_k;q]_\infty,
$$
for $\abs{q}<1$ and $z$, $z_1$, $z_2$,\dots, $z_k\ne 0$.
For $\theta$-products we use
\beqs
J_{a,b}=(q^a,q^{b-a},q^b;q^b)_\infty \quad\mbox{and}\quad
J_b=(q^b;q^b)_\infty,
\eeqs
and as usual
\beq
\eta(\tau) = \exp(\pi i\tau/12) \prod_{n=1}^\infty (1 - \exp(2\pi in\tau))
= q^{1/24} \prod_{n=1}^\infty (1- q^n),
\omylabel{eq:etadef}
\eeq
where $\IM(\tau)>0$.

Throughout this paper we let $\lfloor x \rfloor$ denote the largest
integer less than or equal to $x$, and let $\lceil x \rceil$
denote the smallest integer greater than or equal to $x$.

     We need some notation for formal Laurent series.
See the remarks at the end of \cite[Section 1, p.823]{Pa-Ra12}\omycite{Pa-Ra12}.
Let $R$ be a ring and $q$ be an indeterminant. We let $R((q))$ denote the
formal Laurent series in $q$ with coefficients in $R$. These are
series of the form
$$
f = \sum_{n\in\mathbb{Z}} a_n \, q^n,
$$
where $a_n \ne 0$ for at most finitely many $n < 0$. For $f\ne0$
we define the order of $f$ (with respect to $q$) as the smallest
integer
$N$ such that $a_N\ne0$ and write $N=\ord_q(f)$. We note that if $f$
is a modular function this coincides with $\ord(f,\infty)$.
See equation \omyeqn{ordfz} below for this other notation.
Suppose $t$ and $f\in R((q))$ and the composition $f\circ t$ is well-defined
as a formal Laurent series. This is the case if $\ord_q(t)>0$.
The $t$-order of
$$
F = f \circ t = \sum_{n\in\mathbb{Z}} a_n \, t^n,
$$
where $t = \sum_{n\in\mathbb{Z}} b_n \, q^n$, is defined to be the
smallest integer $N$ such that $a_N\ne0$ and write $N=\ord_t(F)$.
%% do not use: For example, if
%% do not use: $$
%% do not use: f = q^{2} + 2 q^3 + \cdots,
%% do not use: $$
%% do not use: and
%% do not use: $$
%% do not use: t = q^{-1} + 3  + 5 q + \cdots,
%% do not use: $$
%% do not use: then
%% do not use: $\ord_q(f) = 2$, $\ord_q(t)=-1$, $\ord_t(F)=2$ and $\ord_q(F) = -2$.
%% f:=1/q + 1 + 2*q+C1*q^2; t:=q^2+3*q^3+5*q^4+q^5*C2;
%%                          1                 2
%%                     f := - + 1 + 2 q + C1 q
%%                          q
%%                           5      4      3    2
%%                  t := C2 q  + 5 q  + 3 q  + q
%% expand(subs(q=t,f));
%%              1                         5       4      3      2
%%   ------------------------ + 1 + 2 C2 q  + 10 q  + 6 q  + 2 q
%%       5      4      3    2
%%   C2 q  + 5 q  + 3 q  + q
%%             2  10             9            8            7
%%      + C1 C2  q   + 10 C1 C2 q  + 6 C1 C2 q  + 2 C1 C2 q
%%               8          7          6         5    4
%%      + 25 C1 q  + 30 C1 q  + 19 C1 q  + 6 C1 q  + q  C1
%% series(%,q,6);
%% 1/q^2 - 3/q + 5 + (-C2 + 3)*q + (6*C2 - 27)*q^2 + (-17*C2 + 78)*q^3 + O(q^4)
For example, if
$$
f = {q}^{-1} + 1 + 2\,q + \cdots, \qquad
t = q^2 + 3q^3 + 5q^4 + \cdots,
$$
then
$$
F = f \circ t
 = {t}^{-1} + 1 + 2\,t + \cdots, \qquad
 = q^{-2} - 3{q}^{-1} + 5 + \cdots,
$$
so that
$\ord_q(f) = -1$, $\ord_q(t)=2$, $\ord_t(F)=-1$ and $\ord_q(F) = -2$.

%SECTION 2%%%%%%%%%%%%%%%%%%%%%%%%%%%%%%%%%%%%%%%%%%%%%%%%%%%%%%%%%%%%%%%%
\section{Modular Functions}
\omylabel{sec:modfuncs}
%\input sect2.tex
%%\setcounter{equation}{0}
%%\section{Modular Functions}
%%\label{sec:modfuncs}

In this section we present the needed theory of modular functions
which we use to prove identities. A general reference is Rankin's book
\cite{Ra}\omycite{Ra}.

%%%%%%%SUBSECTION Background theory
\subsection{Background theory}
\omylabel{subsec:bthy}
Our presentation is based on \cite[pp.326-329]{Be-RNIII}\omycite{Be-RNIII}.
Let $\uhp = \{\tau\,:\,\IM(\tau)>0\}$ (the complex upper half-plane).
For each $M=\abcdMAT \in M_2^{+}(\mathbb{Z})$, the set of integer
$2\times 2$ matrix with positive determinant, the bilinear
transformation $M(\tau)$ is defined by
$$
M\tau = M(\tau) = \frac{a\tau +b}{c\tau +d}.
$$
The stroke operator is defined by
$$
\left(\stroke{f}{M}\right)(\tau) = f(M\tau),
$$
and satisfies
$$
\stroke{f}{MS} = \stroke{\stroke{f}{M}}{S}.
$$
The modular group $\Gamma(1)$ is defined by
$$
\Gamma(1) = \left\{\abcdMAT\in M_2^{+}(\mathbb{Z})\,:\, ad -bc=1\right\}.
$$
We consider the following subgroups $\Gamma$ of the modular group
with finite index
$$
\Gamma_0(N) =
\left\{\abcdMAT\in\Gamma(1)\,:\, c\equiv 0 \pmod{N}\right\},
$$
$$
\Gamma_1(N) =
\left\{\abcdMAT\in\Gamma(1)\,:\, \abcdMAT\equiv\GoneMAT\pmod{N} \right\}.
$$
Such a group $\Gamma$ acts on
$\uhp \cup \mathbb{Q} \cup \{\infty\}$ by the transformation $V(\tau)$,
for $V\in\Gamma$ which induces an equivalence relation. We
call a set $\mathscr{F}\subseteq \uhp \cup \mathbb{Q} \cup \{\infty\}$
a \textit{fundamental set} for $\Gamma$ if it contains one element of
each equivalence class. The finite set $\mathcal{F} \cap \left(\mathbb{Q}
\cup \{\infty\}\right)$ is called the \textit{complete set of inequivalent cusps}.

A function $f\,:\,\mathscr{H} \longrightarrow \mathbb{C}$ is
a \textit{weakly holomorphic modular function} on $\Gamma$ if the following conditions
hold:
\begin{enumerate}
\item[(i)] $f$ is holomorphic on $\uhp$.
\item[(ii)] $\displaystyle \stroke{f}{V} = f$ for all $V\in\Gamma$.
\item[(iii)] For every $A\in\Gamma(1)$ the function $\stroke{f}{A^{-1}}$
has an expansion
$$
(\stroke{f}{A^{-1}})(\tau) = \sum_{m=m_0}^\infty b(m) \exp(2\pi i\tau m/\kappa)
$$
on some half-plane $\left\{\tau\,:\,\IM \tau > h \ge 0\right\}$,
where $T=\TMAT$ and
$$
\kappa = \min \left\{k>0\,:\, \pm A^{-1} T^k A \in \Gamma\right\}.
$$
\end{enumerate}
The positive integer $\kappa = \kappa(\Gamma;\zeta)$
is called the \textit{fan width} of
$\Gamma$ at the cusp $\zeta = A^{-1}\infty$.
If $b(m_0)\ne 0$, then we write
$$
\ORD(f,\zeta,\Gamma) = m_0
$$
which is called the \textit{order} of $f$ at $\zeta$ with respect to
$\Gamma$. We also write
\beq
\ord(f;\zeta) = \frac{m_0}{\kappa} = \frac{m_0}{\kappa(\Gamma,\zeta)},
\omylabel{eq:ordfz}
\eeq
which is called the \textit{invariant order} of $f$ at $\zeta$.
For each $z\in\uhp$, $\ord(f;z)$ denotes the order of
$f$ at $z$ as an analytic function of $z$, and the order of $f$ with
respect to $\Gamma$ is defined by
$$
\ORD(f,z,\Gamma) = \frac{1}{\ell} \ord(f;z)
$$
where $\ell$ is the order of $z$ as a fixed point of $\Gamma$.
We note $\ell =1$, $2$ or $3$.
Our main tool for proving modular function identities
is the valence formula \cite[Theorem 4.1.4, p.98]{Ra}\omycite{Ra}.
If $f\ne0$ is a modular function on $\Gamma$ and $\mathscr{F}$ is any
fundamental set for $\Gamma$ then
\beq
\sum_{z\in\mathscr{F}} \ORD(f,z,\Gamma) = 0.
\omylabel{eq:valform}
\eeq

%%%%%%%SUBSECTION Background theory
\subsection{Eta-product identities}
\omylabel{subsec:etaprods}
We will consider eta-products of the form
\begin{equation}
f(\tau) = \prod_{d\mid N} \eta(d\tau)^{m_{d}},
\omylabel{eq:etapdef}
\end{equation}
where $N$ is a positive integer,  each $d>0$ and $m_{d}\in\Z$.

\subsubsection*{Modularity}
Newman \cite{Ne59}\omycite{Ne59} has found necessary and sufficient conditions
under which an eta-product is a modular function on $\Gamma_0(N)$.
\begin{theorem}[{\cite[Theorem 4.7]{Ne59}\omycite{Ne59}}]
\omylabel{thm:etamodthm}
The function $f(\tau)$ (given in \omyeqn{etapdef}) is a modular function
on $\Gamma_0(N)$ if and only if
\begin{enumerate}
\item
$\displaystyle\sum_{d\mid N} m_d = 0$,
\item
$\displaystyle\sum_{d\mid N} d m_d \equiv0\pmod{24}$,
\item
$\displaystyle\sum_{d\mid N} \frac{N m_d}{d} \equiv0\pmod{24}$, and
\item
$\displaystyle\prod_{d\mid N} d^{|m_d|}$ is a square.
\end{enumerate}
\end{theorem}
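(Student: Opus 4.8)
The plan is to verify the three defining properties (i)--(iii) of a weakly holomorphic modular function on $\Gamma_0(N)$ directly from the transformation law of $\eta$ under $\Gamma(1)$, with essentially all of the content residing in the invariance condition (ii). Property (i) is immediate: $\eta$ is holomorphic and nonvanishing on $\uhp$, and $\delta\tau\in\uhp$ whenever $\delta>0$ and $\tau\in\uhp$, so each factor $\eta(\delta\tau)^{m_\delta}$, and hence $f$, is holomorphic on $\uhp$ for any choice of the exponents $m_\delta$.

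For (ii) I would start from the matrix factorization that turns a dilation followed by $\gamma\in\Gamma_0(N)$ into an element of $\Gamma(1)$ followed by the same dilation. Writing $\gamma=\abcdMAT$ with $N\mid c$, for each $\delta\mid N$ we have $\delta\mid c$, so
\[
\MAT{\delta}{0}{0}{1}\gamma=\gamma_\delta\MAT{\delta}{0}{0}{1},\qquad \gamma_\delta:=\MAT{a}{\delta b}{c/\delta}{d}\in\Gamma(1),
\]
the determinant being $ad-bc=1$. Applying the $\Gamma(1)$ transformation law of $\eta$ to $\gamma_\delta$ and evaluating at the argument $\delta\tau$, the automorphy factor becomes $\bigl(\tfrac{c}{\delta}\,\delta\tau+d\bigr)^{1/2}=(c\tau+d)^{1/2}$ for every $\delta$, so that
\[
\left(\stroke{f}{\gamma}\right)(\tau)=(c\tau+d)^{\frac12\sum_{\delta\mid N}m_\delta}\Biggl(\prod_{\delta\mid N}\nu_\eta(\gamma_\delta)^{m_\delta}\Biggr)\,f(\tau),
\]
where $\nu_\eta$ is the eta-multiplier. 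Hence $f$ has weight zero, forcing the first factor to disappear for all $\gamma$, precisely when $\sum_{\delta\mid N}m_\delta=0$; this is condition (1), and it is plainly necessary. It remains to decide when the unimodular factor $\prod_{\delta\mid N}\nu_\eta(\gamma_\delta)^{m_\delta}$ equals $1$ for every $\gamma\in\Gamma_0(N)$.

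Two of the remaining conditions come cheaply. Taking $\gamma=\TMAT\in\Gamma_0(N)$ (the case $c=0$), the elementary relation $\eta(\delta(\tau+1))=e^{\pi i\delta/12}\eta(\delta\tau)$ gives $\stroke{f}{T}=\exp\!\bigl(\tfrac{\pi i}{12}\sum_{\delta\mid N}\delta\,m_\delta\bigr)f$, so invariance under translation is equivalent to $\sum_{\delta\mid N}\delta\,m_\delta\equiv0\pmod{24}$, which is condition (2). Condition (3) is then its mirror image under the Fricke involution $W_N=\MAT{0}{-1}{N}{0}$: since $\eta(\delta\,W_N\tau)$ is a nonzero automorphy factor times $\eta\bigl((N/\delta)\tau\bigr)$, conjugation by $W_N$ --- which normalizes $\Gamma_0(N)$ --- carries the eta-product with exponents $(m_\delta)$ to one whose exponents are reindexed by $\delta\mapsto N/\delta$, and condition (2) for the transformed product reads $\sum_{\delta\mid N}\tfrac{N}{\delta}\,m_\delta\equiv0\pmod{24}$, i.e.\ condition (3).

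The heart of the argument, and the step I expect to be the main obstacle, is to show that conditions (1)--(3) together with the square condition (4) are exactly what force $\prod_{\delta\mid N}\nu_\eta(\gamma_\delta)^{m_\delta}=1$ for a \emph{general} $\gamma\in\Gamma_0(N)$ with $c>0$. Here I would substitute the explicit multiplier, which in one standard normalization reads $\nu_\eta\abcdMAT=\exp\!\bigl(\pi i\bigl(\tfrac{a+d}{12c}-s(d,c)-\tfrac14\bigr)\bigr)$ with $s(h,k)$ the Dedekind sum, together with its equivalent quadratic-residue form. Evaluated at $\gamma_\delta$, whose lower-left entry is $c/\delta$, the linear terms $\tfrac{\delta(a+d)}{12c}$ and the Dedekind sums $s(d,c/\delta)$ must be combined across the divisors $\delta\mid N$; using Dedekind reciprocity and the congruences (2)--(3) one reassembles the exponential part into an even integer, while the Kronecker-symbol part collapses, through quadratic reciprocity, to a sign governed by whether $\prod_{\delta\mid N}\delta^{m_\delta}$ --- equivalently $\prod_{\delta\mid N}\delta^{|m_\delta|}$, which differs from it by a perfect square --- is itself a perfect square, which is condition (4). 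Controlling this family of Dedekind sums uniformly in $\gamma$ is the delicate bookkeeping that makes the step the crux. Finally, property (iii) drops out of the same transformation law: for any $A\in\Gamma(1)$ the function $\stroke{f}{A^{-1}}$ is again an automorphy factor times an eta-product expressed in a local uniformizer at the cusp $A^{-1}\infty$, hence admits a Fourier expansion of the required shape, so that $f$ is weakly holomorphic.
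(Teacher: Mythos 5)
The paper does not prove this statement at all --- it is quoted verbatim as Newman's Theorem 4.7 from \cite{Ne59} and used as a black box --- so the only fair comparison is against Newman's original argument, which is exactly the route you outline. Your easy steps are correct: holomorphy on $\uhp$ is immediate; the factorization $\MAT{\delta}{0}{0}{1}\gamma=\gamma_\delta\MAT{\delta}{0}{0}{1}$ with $\gamma_\delta=\MAT{a}{\delta b}{c/\delta}{d}\in\Gamma(1)$ is valid because $\delta\mid N\mid c$; the automorphy factors do all collapse to $(c\tau+d)^{1/2}$; necessity of (1) and (2) follows as you say; and deriving (3) by conjugating with the Fricke involution (which normalizes $\Gamma_0(N)$ and reindexes $\delta\mapsto N/\delta$) is a legitimate shortcut. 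The observation that $\prod_\delta\delta^{m_\delta}$ and $\prod_\delta\delta^{|m_\delta|}$ differ by a perfect square is also correct.

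The genuine gap is that the entire content of the theorem --- the step you yourself flag as ``the crux'' --- is described rather than proved. The theorem lives or dies on the claim that, given (1)--(3), the constant $\prod_{\delta\mid N}\nu_\eta(\gamma_\delta)^{m_\delta}$ equals $1$ for \emph{every} $\gamma\in\Gamma_0(N)$ if and only if (4) holds; saying that Dedekind reciprocity ``reassembles the exponential part into an even integer'' and that the symbol part ``collapses, through quadratic reciprocity,'' to $\leg{s}{|d|}$ with $s=\prod_\delta\delta^{m_\delta}$ is a statement of the desired outcome, not a derivation --- that bookkeeping across all divisors $\delta$, uniformly in $\gamma$, \emph{is} Newman's proof, and nothing in your text performs it. In addition, the ``only if'' direction of (4) needs an ingredient you never mention: granting the collapse to $\leg{s}{|d|}$, you must exhibit, when $s$ is not a square, an actual matrix $\MAT{a}{b}{c}{d}\in\Gamma_0(N)$ with $\leg{s}{|d|}=-1$, which requires choosing $d$ coprime to $N$ in a suitable residue class (Dirichlet or CRT plus reciprocity) and checking that the sign and branch terms do not interfere. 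As written, the proposal is a correct road map of the classical argument, but the destination --- both directions of the multiplier evaluation --- is left unreached.
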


\subsubsection*{Orders at cusps}
Ligozat \cite{Li75}\omycite{Li75} has computed the invariant order of an eta-product
 at the cusps of $\Gamma_0(N)$.
\begin{theorem}[{\cite[Theorem 4.8]{Li75}\omycite{Li75}}]
\omylabel{thm:ordthm}
If the eta-product $f(\tau)$ (given in \omyeqn{etapdef})   is a modular function
on $\Gamma_0(N)$, then its order at the cusp $\zeta=\frac{b}{c}$
(assuming $(b,c)=1$) is
\begin{equation}
\ord(f(\tau);\zeta)=\sum_{d\mid N} \frac{(d,c)^2 m_d}{24d}.
\mylabel{eq:ecord}
\end{equation}
\end{theorem}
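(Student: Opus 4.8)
The plan is to reduce everything to the classical transformation law for $\eta$ under $\SLZ$ and then read off the leading exponential growth. Fix $\zeta = b/c$ with $(b,c)=1$ and choose $A \in \Gamma(1)$ with $A^{-1}\infty = \zeta$; put $B = A^{-1} = \MAT{b}{\beta}{c}{\delta}$, so that $b\delta - \beta c = 1$ and $B\infty = \zeta$. By the definition of the invariant order, $\ord(f;\zeta) = m_0/\kappa$ is exactly the order in the variable $q = \exp(2\pi i\tau)$ of the expansion of $(\stroke{f}{B})(\tau) = f(B\tau) = \prod_{d\mid N}\eta(dB\tau)^{m_d}$ at $\infty$. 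Hence it suffices, for each $d$, to find the rational number $\lambda_d$ with $\eta(dB\tau)\sim(\text{subexponential})\cdot\exp(2\pi i\lambda_d\tau)$ as $\IM\tau\to\infty$, whereupon $\ord(f;\zeta) = \sum_{d\mid N} m_d\lambda_d$.

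The arithmetic heart is a matrix factorization. The integer matrix $\MAT{d}{0}{0}{1}B = \MAT{db}{d\beta}{c}{\delta}$ has determinant $d$ and induces $\tau\mapsto dB\tau$. I would factor it as $V_d\,U_d$ with $V_d\in\SLZ$ and $U_d = \MAT{a_d}{h_d}{0}{w_d}$ upper triangular, $a_d w_d = d$ and $a_d>0$. Comparing first columns forces $a_d = \gcd(db,c)$, and here the coprimality $(b,c)=1$ yields the clean simplification $\gcd(db,c) = \gcd(d,c) = (d,c)$. Thus $a_d = (d,c)$ and $w_d = d/(d,c)$; this is the one genuinely arithmetic input, and it is where the exponent $(d,c)^2$ will come from.

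Applying the transformation $\eta(V\tau) = \epsilon(V)(c_V\tau+d_V)^{1/2}\eta(\tau)$ (with $(c_V,d_V)$ the bottom row of $V$ and $\epsilon(V)$ a $24$th root of unity) to $V = V_d$ gives $\eta(dB\tau) = \epsilon(V_d)\,(c_{V_d}U_d\tau + d_{V_d})^{1/2}\,\eta(U_d\tau)$. As $\IM\tau\to\infty$ one has $U_d\tau = (a_d\tau+h_d)/w_d\to\infty$, so the automorphy factor grows only polynomially in $\tau$ and contributes nothing to the exponential rate, while $\eta(U_d\tau)\sim\exp\!\left(\frac{\pi i}{12}\cdot\frac{a_d\tau}{w_d}\right) = \exp\!\left(2\pi i\tau\cdot\frac{a_d}{24 w_d}\right)$. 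Hence $\lambda_d = \frac{a_d}{24 w_d} = \frac{(d,c)^2}{24 d}$, and summing with weights $m_d$ gives $\ord(f;\zeta) = \sum_{d\mid N}\frac{(d,c)^2 m_d}{24 d}$, which is \eqn{ecord}.

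The hard part will be the bookkeeping around the automorphy and multiplier factors rather than the main computation. One must check that each $\epsilon(V_d)(c_{V_d}U_d\tau+d_{V_d})^{1/2}$ is a nonzero, subexponential prefactor, so that it enters only the leading coefficient $b(m_0)$ and never the exponent $m_0$, and that the half-integer powers combine consistently across the product --- this is precisely where the weight-zero hypothesis $\sum_{d\mid N} m_d = 0$ of Newman's Theorem \omythm{etamodthm} is used. Once this is settled the fan width $\kappa$ never has to be computed directly: working throughout with the invariant order in the variable $q$ makes it cancel automatically.
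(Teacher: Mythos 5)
The paper does not prove this statement at all: it is quoted as Theorem 4.8 of Ligozat \cite{Li75} (the same formula is also the substance of \cite[Lemma 4.2 ff.]{Bi89}), so there is no internal proof to compare against; your proposal has to stand on its own, and it does. The chain you set up --- choose $B=A^{-1}$ with $B\infty=\zeta$, factor $\MAT{d}{0}{0}{1}B=V_dU_d$ with $V_d\in\SLZ$ and $U_d=\MAT{a_d}{h_d}{0}{w_d}$, observe that the first column of $V_dU_d$ is $a_d$ times the (primitive) first column of $V_d$, so $a_d=\gcd(db,c)=(d,c)$ by the hypothesis $(b,c)=1$, then read off the exponential rate of $\eta(U_d\tau)$ --- is exactly the right mechanism, and it yields $\lambda_d=a_d/(24w_d)=(d,c)^2/(24d)$ since $a_dw_d=d$. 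Two small points deserve more care than you give them, though neither is a gap in substance. First, the final identification $\ord(f;\zeta)=\sum_d m_d\lambda_d$ needs one explicit comparison step: your computation shows $f(B\tau)$ equals a nonzero subexponential prefactor times $\exp\bigl(2\pi i\tau\sum_d m_d\lambda_d\bigr)$, while modularity gives the expansion $f(B\tau)\sim b(m_0)\exp(2\pi i m_0\tau/\kappa)$; matching the two asymptotics along $\tau=iy$, $y\to\infty$, forces $m_0/\kappa=\sum_d m_d\lambda_d$ and incidentally kills any residual power of $\tau$ left over from the automorphy factors. Second, your attribution of the bookkeeping to the weight-zero condition $\sum_{d\mid N}m_d=0$ is not quite right: each factor $(c_{V_d}U_d\tau+d_{V_d})^{m_d/2}$ is individually of polynomial growth, so the rate computation never needs the weights to sum to zero; what the weight-zero (modularity) hypothesis actually buys is that $\stroke{f}{B}$ is periodic with period dividing $\kappa$ and hence has a genuine Fourier expansion --- i.e., that the invariant order you are computing is defined in the first place.
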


Chua and Lang \cite{Ch-La04}\omycite{Ch-La04}
have found a set of inequivalent
cusps for $\Gamma_0(N)$.
\begin{theorem}[{\cite[p.354]{Ch-La04}\omycite{Ch-La04}}]
\omylabel{thm:chualang}
Let N be a positive integer and for each positive divisor $d$ of $N$ let
$e_d = (d,N/d)$. Then the set
\beqs
\Delta = {\underset{d\mid N}{\cup}} \, S_d
\eeqs
is a complete set of inequivalent cusps of $\Gamma_0(N)$ where
$$
S_d = \{ x_i/d\,:\,(x_i,d)=1,\quad 0\le x_i\le d-1,\quad x_i\not\equiv
x_j \pmod{e_d}\}.
%% corrected 06.08.20
$$
\end{theorem}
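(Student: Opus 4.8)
The plan is to work directly with the action of $\Gamma_0(N)$ on $\mathbb{Q}\cup\{\infty\}$ and to establish the standard parametrization of its orbits by congruence conditions. Writing a cusp in lowest terms as $a/c$ (with $\infty=1/0$), the M\"obius action of $M=\begin{pmatrix} A & B \\ C & D\end{pmatrix}\in\Gamma_0(N)$ lifts to the column vector: $M\cdot(a/c)$ is represented by $\pm M\binom{a}{c}=\pm\binom{Aa+Bc}{Ca+Dc}$, and since $M\in\SLZ$ preserves coprimality this is again in lowest terms. Reducing modulo $N$ (where $C\equiv0$ and $AD\equiv1$) and modulo $\gcd(c,N)$ (which divides both $c$ and $N$, killing the $B$-term) shows the necessity of, and a B\'ezout construction the sufficiency of, the criterion that $a/c$ and $a'/c'$ are $\Gamma_0(N)$-equivalent if and only if there is a unit $s$ modulo $N$ with
\[
c'\equiv s\,c \pmod N, \qquad a'\equiv s^{-1}a \pmod{\gcd(c,N)}.
\]
In particular $\gcd(c,N)$ is a class invariant, so I would first attach to each cusp the divisor $d=\gcd(c,N)$ of $N$.

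First I would prove surjectivity: every cusp is equivalent to some $x/d$ with $d\mid N$, $\gcd(x,d)=1$ and $0\le x<d$. Given $a/c$ with $d=\gcd(c,N)$, write $c=d\,c_1$ with $\gcd(c_1,N/d)=1$; solving $s\,c_1\equiv1\pmod{N/d}$ and lifting $s$ to a unit modulo $N$ (using the Chinese remainder theorem to clear the remaining prime factors) produces an $s$ with $s\,c\equiv d\pmod N$. Setting $x\equiv s^{-1}a\pmod d$ and reducing into $[0,d-1]$ then gives $a/c\sim x/d$ by the criterion, with $\gcd(x,d)=1$ because $\gcd(a,d)=1$ and $s$ is a unit. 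Thus $\bigcup_{d\mid N}\{x/d\}$ exhausts all cusps.

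Next I would pin down, for fixed $d\mid N$, exactly when $x/d\sim x'/d$. By the criterion this holds iff there is a unit $s$ modulo $N$ with $d\equiv s\,d\pmod N$ --- equivalently $s\equiv1\pmod{N/d}$ --- and $x'\equiv s^{-1}x\pmod d$. The heart of the matter is computing the image of $\{s:\ \gcd(s,N)=1,\ s\equiv1\ (\mathrm{mod}\ N/d)\}$ under reduction to $(\mathbb{Z}/d\mathbb{Z})^\times$; a prime-by-prime analysis shows this image is exactly $\{u\in(\mathbb{Z}/d\mathbb{Z})^\times:\ u\equiv1\ (\mathrm{mod}\ e_d)\}$, where $e_d=\gcd(d,N/d)$. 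Consequently $x/d\sim x'/d$ iff $x\equiv x'\pmod{e_d}$: the forward direction is immediate by reducing $x'\equiv s^{-1}x$ modulo $e_d$, and the reverse follows by taking $u\equiv x'x^{-1}\pmod d$, which lies in the image because $u\equiv1\pmod{e_d}$. This is precisely the condition defining $S_d$, so the representatives in each $S_d$ are pairwise inequivalent and every $x/d$ is equivalent to one of them; since distinct $d$ give distinct invariants $\gcd(c,N)=d$, the sets $S_d$ are mutually inequivalent as well, and $\Delta=\bigcup_{d\mid N}S_d$ is a complete set of inequivalent cusps.

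I expect the main obstacle to be the two places where a congruence must be \emph{realized} rather than merely deduced: the sufficiency direction of the equivalence criterion, where one must construct an explicit matrix in $\Gamma_0(N)$ from the congruence data via B\'ezout and the Chinese remainder theorem, and the determination of the image of the units $s\equiv1\pmod{N/d}$ in $(\mathbb{Z}/d\mathbb{Z})^\times$. As an independent check on completeness I would verify the cardinality count: the analysis gives $|S_d|=\phi(e_d)=\phi(\gcd(d,N/d))$, so $|\Delta|=\sum_{d\mid N}\phi(\gcd(d,N/d))$, which matches the classical formula for the number of cusps of $\Gamma_0(N)$; surjectivity together with this count already forces inequivalence, giving a second route to the result.
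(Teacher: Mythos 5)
The paper does not actually prove this statement: Theorem \ref{thm:chualang} is quoted as a known result from Chua and Lang \cite{Ch-La04} and used as a black box in the algorithm of Section \ref{sec:modfuncs}, so there is no internal proof to compare against. Judged on its own, your argument is correct and is essentially the standard derivation. The equivalence criterion you start from --- $a/c\sim a'/c'$ under $\Gamma_0(N)$ if and only if $c'\equiv s\,c\pmod{N}$ and $a'\equiv s^{-1}a\pmod{\gcd(c,N)}$ for some unit $s$ modulo $N$ --- is the classical one (it is, up to relabeling $s\mapsto s^{-1}$, Proposition 3.8.3 of Diamond--Shurman), and your three steps (the class invariant $d=\gcd(c,N)$, surjectivity onto representatives $x/d$ via lifting $c_1^{-1}\bmod N/d$ to a unit modulo $N$, and the reduction of equivalence within fixed $d$ to congruence modulo $e_d=\gcd(d,N/d)$) assemble correctly into the statement, including the degenerate cases $d=1$ and $d=N$ (the latter recovering $\infty\sim 1/N$). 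The two points you flag as the technical heart are indeed where the work lies, and both go through: sufficiency of the criterion needs the explicit B\'ezout/CRT construction of a matrix in $\Gamma_0(N)$, and the image of $\{s\in(\mathbb{Z}/N\mathbb{Z})^\times:\,s\equiv 1\pmod{N/d}\}$ in $(\mathbb{Z}/d\mathbb{Z})^\times$ is computed cleanly by the non-coprime CRT: the congruences $s\equiv u\pmod{d}$ and $s\equiv 1\pmod{N/d}$ are compatible precisely when $u\equiv 1\pmod{e_d}$, and any simultaneous solution is automatically coprime to $N$ because every prime dividing $N$ divides $d$ or $N/d$. One caveat on your closing remark: the count $|\Delta|=\sum_{d\mid N}\phi(\gcd(d,N/d))$ is an \emph{independent} second route to inequivalence only if the classical cusp-number formula for $\Gamma_0(N)$ is imported from elsewhere; the textbook proofs of that formula are precisely the orbit analysis you just carried out, so as a standalone argument it would be circular, though it is perfectly fine as a consistency check.
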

Biagioli \cite{Bi89}\omycite{Bi89} has found the fan width of the cusps of
$\Gamma_0(N)$.
\begin{lemma}[{\cite[Lemma 4.2]{Bi89}\omycite{Bi89}}]
\omylabel{lem:fanw}
If $(r,s)=1$, then the fan width of $\Gamma_0(N)$ at $\frac{r}{s}$
is
$$
\kappa\left(\Gamma_0(N); \frac{r}{s}\right) = \frac{N}{(N,s^2)}.
$$
\end{lemma}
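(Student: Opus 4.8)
The plan is to work directly from the definition of fan width recorded in condition~(iii) above, namely $\kappa(\Gamma;\zeta) = \min\{k>0 : \pm A^{-1}T^k A \in \Gamma\}$ where $\zeta = A^{-1}\infty$ and $T = \TMAT$. First I would choose a convenient representative $A\in\Gamma(1)$ carrying the cusp to $\infty$: since $(r,s)=1$ there exist integers $b,d$ with $rd-bs=1$, so I set $A^{-1} = \MAT{r}{b}{s}{d}$, which indeed satisfies $A^{-1}\infty = r/s = \zeta$, and correspondingly $A = \MAT{d}{-b}{-s}{r}$.

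The key computation is the explicit conjugate $A^{-1}T^k A$. Multiplying out the three factors gives
\begin{equation*}
A^{-1}T^k A = \MAT{1 - rsk}{\,r^2 k\,}{-s^2 k}{1 + rsk},
\end{equation*}
which has integer entries and determinant $1$, hence lies in $\Gamma(1)$ for every $k\in\Z$. Membership in $\Gamma_0(N)$ is then controlled \emph{only} by the lower-left entry, and the ambiguous sign is harmless since $\pm(-s^2k)\equiv 0\pmod N$ is equivalent to $s^2k\equiv 0\pmod N$. Thus $\pm A^{-1}T^k A \in \Gamma_0(N)$ if and only if $N \mid s^2 k$. The smallest positive $k$ with this property is $k = N/(N,s^2)$: writing $g=(N,s^2)$, the condition $N\mid s^2 k$ is equivalent to $(N/g)\mid (s^2/g)k$, and $N/g$ is coprime to $s^2/g$, so the minimal $k$ is $N/g$. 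This yields the asserted value $\kappa(\Gamma_0(N);\,r/s) = N/(N,s^2)$.

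The computation itself is entirely routine, so the only point requiring care is seeing that the answer does not depend on the choice of the representative $A$ (otherwise the fan width would not be well defined). I would handle this by observing that any two matrices $A_1,A_2\in\Gamma(1)$ with $A_1^{-1}\infty = A_2^{-1}\infty = r/s$ differ by a stabilizer element of $\infty$, i.e.\ $A_2 = \pm T^m A_1$ for some $m\in\Z$; a one-line substitution then gives $A_2^{-1}T^k A_2 = A_1^{-1}T^{-m}T^k T^m A_1 = A_1^{-1}T^k A_1$, so the minimal admissible $k$ is the same for both. This confirms that $\kappa$ is well defined and completes the proof.
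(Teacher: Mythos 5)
Your proof is correct: the conjugation computation $A^{-1}T^kA = \MAT{1-krs}{kr^2}{-ks^2}{1+krs}$ is accurate, the reduction of the membership condition to $N \mid s^2k$ and the extraction of the minimal $k = N/(N,s^2)$ via the coprimality of $N/(N,s^2)$ and $s^2/(N,s^2)$ are both sound, and the well-definedness check (any two choices of $A$ differ by $\pm T^m$, which commutes harmlessly through the conjugation) is a worthwhile point that the definition quietly presupposes. Note, however, that the paper itself offers no proof of this lemma at all — it is quoted verbatim from Biagioli \cite[Lemma 4.2]{Bi89} as a black box — so there is no internal argument to compare against; your computation is the standard direct verification (essentially what one finds in the cited source), and it stands as a complete, self-contained justification of the statement.
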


\subsubsection*{An application of the valence formula}
Since eta-products have no zeros or poles in $\uhp$ the following result
follows easily from the valence formula \omyeqn{valform}.
\begin{theorem}
\omylabel{thm:valcor}
Let $f_1(\tau)$, $f_2(\tau)$, \dots, $f_n(\tau)$ be eta-products that
are modular functions on $\Gamma_0(N)$. Let $\mathcal{S}_N$ be a set of inequivalent
cusps for $\Gamma_0(N)$. Define the constant
\beq
B = \sum_{\substack{\zeta\in\mathcal{S}_N\\ \zeta\ne \infty}}
        \mbox{min}
        (\left\{\ORD(f_j,\zeta,\Gamma_0(N))\,:\, 1 \le j \le n\right\}),
\omylabel{eq:Bdef}
\eeq
and consider
\beq
g(\tau) := \alpha_1 f_1(\tau) + \alpha_2 f_2(\tau) + \cdots + \alpha_n f_n(\tau),
\omylabel{eq:gdef}
\eeq
where each $\alpha_j\in\mathbb{C}$. Then
$$
g(\tau) \equiv 0
$$
if and only if
\beq
\ORD(g(\tau), \infty, \Gamma_0(N)) > -B.
\omylabel{eq:ORDBineq}
\eeq
\end{theorem}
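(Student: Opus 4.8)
The plan is to reduce the statement to a direct application of the valence formula \eqn{valform}, exploiting the fact that the only possible poles of the eta-products $f_j$ (and hence of $g$) lie at the cusps. First I would observe that each $f_j$, being an eta-product, is nonvanishing and holomorphic on $\uhp$; consequently any linear combination $g$ is also holomorphic on $\uhp$, so $g$ has no zeros or poles at points $z\in\uhp$. Thus in the valence formula the only contributions come from the cusps, and the quantity $\ORD(g,z,\Gamma_0(N))$ vanishes for all $z\in\uhp$. This is the structural fact that makes the whole argument work.

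Next I would record the key lower bound at each finite cusp. For a cusp $\zeta\ne\infty$, the order of a sum is at least the minimum of the orders of the summands, i.e.
\[
\ORD(g,\zeta,\Gamma_0(N)) \ge \min\{\ORD(f_j,\zeta,\Gamma_0(N))\,:\,1\le j\le n\}.
\]
Summing this inequality over all finite cusps $\zeta\in\mathcal{S}_N\setminus\{\infty\}$ gives, by the definition \eqn{Bdef} of the constant $B$,
\[
\sum_{\substack{\zeta\in\mathcal{S}_N\\ \zeta\ne\infty}} \ORD(g,\zeta,\Gamma_0(N)) \ge B.
\]

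Now I would run the two directions of the equivalence. Suppose first that $g\not\equiv0$. Then $g$ is a genuine (nonzero) modular function on $\Gamma_0(N)$, so the valence formula \eqn{valform} applies and yields $\sum_{z\in\mathscr{F}} \ORD(g,z,\Gamma_0(N)) = 0$. Splitting this sum into the contribution at $\infty$, the contributions at the other cusps, and the (zero) contributions from $\uhp$, I get
\[
\ORD(g,\infty,\Gamma_0(N)) = -\sum_{\substack{\zeta\in\mathcal{S}_N\\ \zeta\ne\infty}} \ORD(g,\zeta,\Gamma_0(N)) \le -B,
\]
which is exactly the negation of \eqn{ORDBineq}. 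Contrapositively, if \eqn{ORDBineq} holds then $g\equiv0$. For the converse, if $g\equiv0$ then by the usual convention $\ORD(g,\infty,\Gamma_0(N))=+\infty > -B$, so \eqn{ORDBineq} holds trivially. Combining the two directions gives the stated equivalence.

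The one point requiring care — and what I expect to be the main obstacle — is the bookkeeping in passing from the global valence formula, which is stated over a fundamental set $\mathscr{F}$ containing one representative of each orbit, to the sum over the chosen inequivalent cusp set $\mathcal{S}_N$. I would need to make sure that $\mathcal{S}_N$ (supplied by Theorem \thm{chualang}) together with a fundamental domain in $\uhp$ indeed constitutes such a fundamental set, and that the order $\ORD(g,z,\Gamma_0(N))$ at interior points is genuinely zero (no accidental zeros of the linear combination in $\uhp$ matter here because $\ORD$ at interior points is a \emph{nonnegative} integer, so they could only improve the inequality, not break it — but for the clean valence-formula bookkeeping I would simply note they contribute nonnegatively and can be discarded from the inequality giving $\sum_{\zeta\ne\infty}\ORD(g,\zeta) \le -\ORD(g,\infty)$). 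Once this accounting is set up correctly the result is immediate.
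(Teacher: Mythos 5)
Your proposal is correct and is essentially the paper's own argument: the paper dispatches this theorem with the single remark that eta-products have no zeros or poles in $\uhp$ followed by an appeal to the valence formula \eqn{valform}, which is exactly the reduction you carry out (holomorphy of $g$ in $\uhp$, the bound $\ORD(g,\zeta,\Gamma_0(N))\ge\min_j\ORD(f_j,\zeta,\Gamma_0(N))$ at each finite cusp, and the two directions of the equivalence). One caution: your intermediate claim that $\ORD(g,z,\Gamma_0(N))$ \emph{vanishes} for all $z\in\uhp$ is false as stated --- a linear combination of nonvanishing functions can certainly vanish --- but your final paragraph correctly repairs this by using only that these interior orders are nonnegative, which is all that is needed to conclude $\ORD(g,\infty,\Gamma_0(N))\le -B$ when $g\not\equiv 0$.
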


%%\subsubsection*{An algorithm for proving eta-product identities}
\noindent
\textit{An algorithm for proving eta-product identities.}

        \vskip 10pt\noindent
{\it\footnotesize STEP 0}. \quad  Write the identity in the following
form:
\begin{equation}
    \alpha_1 f_1(\tau) + \alpha_2 f_2(\tau) + \cdots + \alpha_n f_n(\tau)  = 0,
\omylabel{eq:fid}
\end{equation}
where each $\alpha_i\in\C$ and each $f_i(\tau)$ is an eta-product of
level $N$.

        \vskip 10pt\noindent
{\it\footnotesize STEP 1}. \quad  Use Theorem \omythm{etamodthm} to check that
$f_j(\tau)$ is a modular function on $\Gamma_0(N)$ for each
$1 \le j \le n$.

        \vskip 10pt\noindent
{\it\footnotesize STEP 2}. \quad  Use Theorem \omythm{chualang} to
find a set $\mathcal{S}_N$ of inequivalent cusps for $\Gamma_0(N)$ and the
fan width of each cusp.

        \vskip 10pt\noindent
{\it\footnotesize STEP 3}. \quad  Use Theorem \omythm{ordthm} to
calculate the order of each eta-product
$f_j(\tau)$ at each cusp of $\Gamma_0(N)$.

        \vskip 10pt\noindent
{\it\footnotesize STEP 4}. \quad  Calculate
        $$
        B =
        \sum_{\substack{\zeta\in\mathcal{S}_N\\ \zeta\ne \infty}}
        \mbox{min}
        (\left\{\ORD(f_j,\zeta,\Gamma_0(N))\,:\, 1 \le j \le n\right\} ).
        $$
        %%
        %%\left\{\ORD(f_j;s;\Gamma_1(N))\,:\, 1 \le j \le n\right\} \cup \{0\}
        %%

        \vskip 10pt\noindent
{\it\footnotesize STEP 5}. \quad  Show that
        $$
        \ORD(g(\tau),\infty,\Gamma_0(N)) > -B
        $$
        where
        $$
        g(\tau) = \alpha_1 f_1(\tau) + \alpha_2 f_2(\tau) +
        \cdots + \alpha_n f_n(\tau).
        $$
        Theorem \omythm{valcor} then implies that $g(\tau)\equiv0$ and
        hence the eta-product identity  \omyeqn{fid}.

The third author has written a \textsc{MAPLE} package
called \texttt{ETA} which implements this algorithm. See
\begin{center}
\url{http://qseries.org/fgarvan/qmaple/ETA/}
\end{center}

\subsubsection*{A modular equation}
%%with(qseries):
%%with(ETA):
%%gpt:=[1,2,10,4,2,-4,5,-2]:
%%ept:=gp2etaprod(gpt);
%%                                2            4
%%                        eta(tau)  eta(10 tau)
%%                 ept := -----------------------
%%                                  4           2
%%                        eta(2 tau)  eta(5 tau)
%%etaprodtoqseries(ept,20);
%%        2      3      4       5       6       7       8       9
%% q - 2 q  + 3 q  - 6 q  + 11 q  - 16 q  + 24 q  - 38 q  + 57 q
%%
%%          10        11        12        13        14        15
%%    - 82 q   + 117 q   - 168 q   + 238 q   - 328 q   + 448 q
%%
%%           16        17         18         19    / 20\
%%    - 614 q   + 834 q   - 1114 q   + 1480 q   + O\q  /
%%latex(%);
%%(q-2\,{q}^{2}+3\,{q}^{3}-6\,{q}^{4}+11\,{q}^{5}-16\,{q}^{6}+24\,{q}^{7
%%}-38\,{q}^{8}+57\,{q}^{9}-82\,{q}^{10}+117\,{q}^{11}-168\,{q}^{12}+238
%%\,{q}^{13}-328\,{q}^{14}+448\,{q}^{15}-614\,{q}^{16}+834\,{q}^{17}-
%%1114\,{q}^{18}+1480\,{q}^{19}+O \left( {q}^{20} \right) )
%%
Define
\begin{align}
\mylabel{eq:t7def}
t:=t(\tau):=\frac{\eta(7\tau)^4}{\eta(\tau)^4}.
\end{align}
We note that $t(\tau)$ is a Hauptmodul for $\Gamma_0(7)$
\cite{Ma09}\omycite{Ma09}.
As an application of our algorithm we prove the following theorem which will be needed
later.
\begin{theorem}
\omylabel{thm:modeq}
Let
\begin{align}
a_0(t)&=t,\omylabel{eq:a0}\\
a_1(t)&=7^2t^2+4\cdot 7t,\omylabel{eq:a1}\\
a_2(t)&=7^4t^3+4\cdot 7^3t^2+46\cdot 7t,\omylabel{eq:a2}\\
a_3(t)&=7^6t^4+4\cdot 7^5t^3+46\cdot 7^3t^2+272\cdot 7t,\omylabel{eq:a3}\\
a_4(t)&=7^8t^5+4\cdot 7^7t^4+46\cdot 7^5t^3+272\cdot 7^3t^2+845\cdot 7t,\omylabel{eq:a4}\\
a_5(t)&=7^{10}t^6+4\cdot 7^9t^5+46\cdot 7^7t^4+272\cdot 7^5t^3+845\cdot 7^3t^2+176\cdot 7^2t,\omylabel{eq:a5}\\
a_6(t)&=7^{12}t^7+4\cdot 7^{11}t^6+46\cdot 7^9t^5+272\cdot 7^7t^4+845\cdot 7^5t^3+176\cdot 7^4t^2+82\cdot 7^2t.\omylabel{eq:a6}
\end{align}
where $t=t(\tau)$ is defined in \eqn{t7def}.
Then
\beq
t(\tau)^7-\sum_{l=0}^{6}a_l(t(7\tau))t(\tau)^l=0.
\omylabel{eq:modeq}
\eeq
\end{theorem}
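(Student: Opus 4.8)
The plan is to recognize \eqn{modeq} as a single identically-vanishing $\mathbb{Z}$-linear combination of eta-products of level $49$, and then to verify it by the valence-formula criterion of Theorem \thm{valcor} (that is, by running the five-step algorithm of Section~\sect{modfuncs} with $N=49$). Writing $u:=t(7\tau)=\eta(49\tau)^4/\eta(7\tau)^4$, each monomial occurring in \eqn{modeq} is
$$
u^{j}\,t(\tau)^{l}=\eta(49\tau)^{4j}\,\eta(7\tau)^{4(l-j)}\,\eta(\tau)^{-4l},
$$
an eta-product of level $N=49$. Substituting the explicit polynomials \eqn{a0}--\eqn{a6} therefore writes
$$
g(\tau):=t(\tau)^7-\sum_{l=0}^{6}a_l(t(7\tau))\,t(\tau)^l=\sum_{k}\alpha_k f_k(\tau)
$$
as a combination of finitely many (about thirty) eta-products $f_k$ of level $49$, whose coefficients $\alpha_k$ are the integers read off from \eqn{a0}--\eqn{a6}. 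The theorem is exactly the statement $g\equiv0$.

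First I would carry out STEP~1: using Newman's criterion (Theorem \thm{etamodthm}) I check that every $f_k=\eta(49\tau)^{4j}\eta(7\tau)^{4(l-j)}\eta(\tau)^{-4l}$ is a modular function on $\Gamma_0(49)$. All four conditions hold automatically here, since the exponents are multiples of $4$, the exponent sum is $0$, the factor $168=24\cdot7$ forces $\sum_d d\,m_d\equiv\sum_d (N/d)m_d\equiv0\pmod{24}$, and $\prod_d d^{|m_d|}$ is a power of $7$ with even exponent. Next, for STEPS~2--3 I would use Theorem \thm{chualang} and Lemma \lem{fanw} to list the eight inequivalent cusps of $\Gamma_0(49)$ --- namely $\infty$ (fan width $1$), $0$ (fan width $49$), and the six cusps $x/7$, $1\le x\le6$ (each of fan width $1$) --- and then apply Ligozat's formula (Theorem \thm{ordthm}) to compute $\ORD(f_k,\zeta,\Gamma_0(49))$ for each $f_k$ at each cusp $\zeta$.

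With these orders tabulated I would form the constant $B$ of \eqn{Bdef} (STEP~4), summing the minimum of the $\ORD(f_k,\zeta,\Gamma_0(49))$ over the seven cusps $\zeta\ne\infty$. Finally (STEP~5), since $t(\tau)=q+O(q^2)$ and $t(7\tau)=q^7+O(q^{14})$ give $\ord_q(u^j t(\tau)^l)=7j+l$, I would expand $g(\tau)$ in powers of $q$ and check that all coefficients up to order $-B$ vanish; because the fan width at $\infty$ is $1$ this is precisely $\ORD(g,\infty,\Gamma_0(49))>-B$, and Theorem \thm{valcor} then yields $g\equiv0$, i.e.\ \eqn{modeq}. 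In practice this verification is delegated to the \texttt{ETA} package.

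The proof is a finite algorithmic check, so the difficulty is computational rather than conceptual: level $49$ has eight cusps, the expansion of $\sum_l a_l(u)t^l$ produces on the order of thirty distinct eta-products with coefficients as large as $7^{12}$, and the bound $-B$ is large enough that the $q$-expansions must be carried to many terms before the valence formula can be applied. A useful preliminary sanity check is that \eqn{modeq} exhibits $t(\tau)$ as monic of degree $7$ over $\mathbb{C}(t(7\tau))$, consistent with the index of the appropriate conjugate of $\Gamma_0(7)$ in the group fixing $t(7\tau)$; confirming this degree pattern first guards against an error in the stated polynomials before the heavy cusp computation is run.
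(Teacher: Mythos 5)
Your proposal is correct and follows essentially the same route as the paper: both verify \eqn{modeq} as an eta-product identity on $\Gamma_0(49)$ via the five-step valence-formula algorithm (Newman's criterion, the Chua--Lang cusps with Biagioli's fan widths, Ligozat's orders, the bound $B$, and a finite $q$-expansion check at $\infty$ delegated to the \texttt{ETA} package). The only cosmetic difference is that the paper first divides by $t(\tau)^7$, so its $29$ terms $t(7\tau)^k t(\tau)^{j-7}$ have poles pushed to the cusps $x/7$ and it obtains $B=-48$, whereas you work with the undivided combination; the argument is otherwise identical.
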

\begin{proof}
From Theorem \omythm{etamodthm} we find that $t(\tau)$ is a modular
function on $\Gamma_0(7)$ and $t(7\tau)$ is a modular function
on $\Gamma_0(49)$. Hence each term on the left side of \omyeqn{modeq}
is a modular function on $\Gamma_0(49)$. For convenience
we divide by $t(\tau)^7$ and let
\beq
g(\tau) = 1 - \sum_{l=0}^{6}a_l(t(7\tau))t(\tau)^{l-7}.
\omylabel{eq:gsum}
\eeq
From Theorem \omythm{chualang}, Lemma \omylem{fanw} and Theorem \omythm{ordthm}
we have the
following table of fan widths for the cusps of $\Gamma_0(49)$, with
the orders and invariant orders of both $t(\tau)$ and $t(7\tau)$.
$$ 
\begin{array}{|c|c|c|c|c|c|c|c|c|} 
\noalign{\hrule} 
\zeta &0& 1/7& 2/7& 3/7& 4/7& 5/7& 6/7& 1/49 \\
\noalign{\hrule} 
\kappa(\Gamma_0(49),\zeta)& 49& 1& 1& 1& 1& 1 &1&1 \\ 
\noalign{\hrule} 
\ord(t(\tau),\zeta)&      -1/7& 1& 1& 1& 1& 1& 1& 1 \\
\noalign{\hrule} 
\ORD(t(\tau),\zeta,\Gamma_0(49))&-7& 1& 1& 1& 1& 1& 1& 1 \\
\noalign{\hrule} 
\ord(t(7\tau),\zeta)&-1/49& -1& -1& -1& -1& -1& -1& 7 \\
\noalign{\hrule} 
\ORD(t(7\tau),\zeta,\Gamma_0(49))&-1& -1& -1& -1& -1& -1& -1& 7 \\
\noalign{\hrule}
\end{array}
$$

Expanding the right side of \omyeqn{gsum} gives $29$ terms of the
form $t(7\tau)^k t(\tau)^{j-7}$ with $1\le k
\le j+1$ where $0 \le j \le 6$, together with $(k,j)=(0,7)$.
We calculate the order of each term at each cusp $\zeta$ of $\Gamma_0{(49)}$,
and thus giving  lower bounds for $\ORD(g(\tau),\zeta, \Gamma_0(49))$.
%% > LB:=cusp->min(map(f->cuspORD(f,49,cusp),symFL));
%% > LC049b;
%%                    [   1  2  3  4  5  6  1 ]
%%                    [0, -, -, -, -, -, -, --]
%%                    [   7  7  7  7  7  7  49]
%% > LB7:=[seq(LB(cusp),cusp in LC049b)];
%%              LB7 := [0, -8, -8, -8, -8, -8, -8, 0]
%% 
%% 0, 1/7, 2/7, 3/7, 4/7, 5/7, 6/7, 1/49
%% 0, -8, -8, -8, -8, -8, -8, 0
$$
\begin{array}{|c|c|c|c|c|c|c|c|c|} 
\noalign{\hrule} 
\zeta &0& 1/7& 2/7& 3/7& 4/7& 5/7& 6/7& 1/49 \\
\noalign{\hrule} 
\ORD(g(\tau),\zeta,\Gamma_0(49))\ge & 0& -8& -8& -8& -8& -8& -8& 0\\
\noalign{\hrule}
\end{array}
$$

Thus the constant $B$ in Theorem \omythm{valcor} is $B=-48$. It suffices to show
that
$$
\ORD(g(\tau),\infty,\Gamma_0(49))> 48.
%% corrected 06.08.20 changed "ge" to ">"
$$
This is easily verified. Thus by Theorem \omythm{valcor} we have $g(\tau) \equiv 0$ and
the result follows.
\end{proof}

%%%%%%%SUBSECTION Background theory
\subsection{The $U_p$ operator}
\omylabel{subsec:Upop}

Let $p$ prime and
$$
f = \sum_{m=m_0}^\infty a(m) q^m
$$
be a formal Laurent series. We
define $U_p$ by
\beq
U_p(f) := \sum_{p m \ge m_0}  a(p m) q^m.
\omylabel{eq:Updeffls}
\eeq
If $f$ and $h$ are  modular functions (with $q=\exp(2\pi i\tau)$),
\beq
U_p(f) = \frac{1}{p} \sum_{j=0}^p \stroke{f}{\MAT{1/p}{j/p}{0}{1}}
 = \frac{1}{p} \sum_{j=0}^p f\left(\frac{\tau +j}{p}\right),
\omylabel{eq:Updef}
\eeq

and for
\begin{align*}
H(\tau)=h(p\tau),
\end{align*}
we have
\begin{align}
U_p(fH)(\tau)=h(\tau)U_p(f)(\tau).
\mylabel{eq:u71}
\end{align}

\begin{theorem}[{\cite[Lemma 7, p.138]{At-Le70}\omycite{At-Le70}}]
\omylabel{thm:ALUpthm}
Let $p$ be prime. If $f$ is a modular function on $\Gamma_0(pN)$
and $p\mid N$, then $U_p(f)$ is a modular function
on $\Gamma_0(N)$.
\end{theorem}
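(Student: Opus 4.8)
The plan is to verify directly that $g := U_p(f)$ satisfies the three defining conditions of a weakly holomorphic modular function on $\Gamma_0(N)$. Using \eqn{Updef} it is convenient to write $g = \tfrac1p\sum_{j}\stroke{f}{\beta_j}$, where $\beta_j = \MAT{1}{j}{0}{p}$ and $j$ runs over a complete residue system modulo $p$; these integral matrices induce the same maps $\tau\mapsto(\tau+j)/p$ as the ones in \eqn{Updef}, and for weight $0$ the scalar factor is irrelevant to the stroke action. Condition (i), holomorphy on $\uhp$, is then immediate: each $\beta_j$ sends $\uhp$ into $\uhp$, so each $\stroke{f}{\beta_j}$ is holomorphic there, and a finite sum of holomorphic functions is holomorphic.

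The heart of the argument is condition (ii), invariance under $\Gamma_0(N)$. Fix $V=\abcdMAT\in\Gamma_0(N)$, so $c\equiv0\pmod N$; since $p\mid N$ this also gives $c\equiv0\pmod p$. First I would produce, for each $j$, an index $\sigma(j)$ and a matrix $\gamma_j\in\Gamma_0(pN)$ with $\beta_j V=\gamma_j\beta_{\sigma(j)}$. Computing $\beta_j V=\MAT{a+jc}{b+jd}{pc}{pd}$ and setting $\gamma_j=\beta_j V\beta_{\sigma(j)}^{-1}$ yields
$$
\gamma_j=\MAT{a+jc}{(b+jd-(a+jc)\sigma(j))/p}{pc}{d-c\,\sigma(j)}.
$$
I would then define $\sigma(j)$ to be the residue modulo $p$ solving $a\,\sigma(j)\equiv b+jd\pmod p$. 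This congruence is solvable because $ad-bc=1$ together with $c\equiv0\pmod p$ forces $\gcd(a,p)=1$, and it is exactly what makes the off-diagonal entry of $\gamma_j$ an integer (here $c\equiv0\pmod p$ is used to drop the $jc\,\sigma(j)$ term). The lower-left entry $pc$ is divisible by $pN$ since $c\equiv0\pmod N$, and $\det\gamma_j=\det(\beta_j V)/\det\beta_{\sigma(j)}=p/p=1$, so indeed $\gamma_j\in\Gamma_0(pN)$. Finally $\sigma(j)\equiv a^{-1}(b+jd)\pmod p$ is affine in $j$ with unit slope $a^{-1}d$ (a unit since $d\equiv a^{-1}\pmod p$), hence a permutation of the residues modulo $p$. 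Invariance of $f$ under $\Gamma_0(pN)$ now gives $\stroke{f}{\beta_j V}=\stroke{(\stroke{f}{\gamma_j})}{\beta_{\sigma(j)}}=\stroke{f}{\beta_{\sigma(j)}}$, and summing over $j$ and reindexing by $\sigma$ yields $\stroke{g}{V}=g$.

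For condition (iii), the behaviour at the cusps, I would note that $g$ is a finite $\C$-linear combination of the functions $\stroke{f}{\beta_j}$ with $\beta_j\in M_2^{+}(\Z)$. Since $f$ is a weakly holomorphic modular function on $\Gamma_0(pN)$ it is meromorphic at every cusp, and this property is preserved under the stroke action of integer matrices of positive determinant, which permute $\mathbb{Q}\cup\{\infty\}$ and carry cusp expansions into cusp expansions in a suitable $q^{1/\kappa}$; hence each $\stroke{f}{\beta_j A^{-1}}$, and therefore $\stroke{g}{A^{-1}}$, has an expansion with only finitely many negative terms for every $A\in\Gamma(1)$. Together with (i) and (ii) this shows $g=U_p(f)$ is a modular function on $\Gamma_0(N)$. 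The main obstacle is the matrix bookkeeping in condition (ii): one must check that the single congruence defining $\sigma(j)$ simultaneously makes $\gamma_j$ integral, keeps it inside $\Gamma_0(pN)$, and makes $\sigma$ a permutation — and it is precisely the hypothesis $p\mid N$, forcing $c\equiv0\pmod p$, that makes all three of these hold at once.
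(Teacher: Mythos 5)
Your argument is correct, but there is nothing internal to the paper to compare it against: the paper does not prove this statement at all, it imports it verbatim as Atkin and Lehner \cite[Lemma 7, p.138]{At-Le70}. What you have written is essentially a weight-zero reconstruction of the classical argument behind that lemma. The coset bookkeeping checks out: with $\gamma_j=\beta_jV\beta_{\sigma(j)}^{-1}$ one gets $\det\gamma_j=ad-bc=1$; integrality of the upper-right entry is exactly the congruence $a\sigma(j)\equiv b+jd\pmod p$, which is solvable because $p\mid c$ forces $ad\equiv1\pmod p$; the lower-left entry $pc$ is divisible by $pN$ because $N\mid c$; and $\sigma$ is an affine bijection of $\Z/p\Z$. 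One point you use implicitly and could state: $\stroke{f}{\beta_j}$ depends only on $j\bmod p$, since $\beta_{j+kp}=T^k\beta_j$ and $T^k\in\Gamma_0(pN)$, so the reindexing over $\sigma$ is legitimate whatever representatives are chosen. Two further observations. First, your verification of invariance uses no holomorphy whatsoever, only the group action; this is precisely the observation the authors later assert without proof (Lemma \ref{lemgp}) when they apply the same statement to the non-holomorphic functions $H_i$, so your computation actually documents a claim the paper makes in two places. Second, the one genuinely sketchy spot is condition (iii): to make ``stroke by an integral matrix of positive determinant preserves meromorphy at the cusps'' precise, factor $\beta_jA^{-1}=B\,\MAT{r}{s}{0}{t}$ with $B\in\Gamma(1)$ and $r,s,t\in\mathbb{Q}$, $rt>0$, then apply condition (iii) for $f$ with the matrix $B$; the resulting series in $\exp(2\pi i r\tau/(t\kappa))$ has only finitely many negative terms. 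This is routine but it is where the finiteness actually gets checked. Finally, you silently (and correctly) repaired a typo: the sum in \eqn{Updef} should run over $j=0,\dots,p-1$, a complete residue system modulo $p$, as in your setup and as forced by consistency with \eqn{Updeffls}.
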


%%%%%%%%%SUBSECTION Background theory
%%\subsection{Order of formal Laurent series}
%%\omylabel{subsec:ordfls}
%%
%%     In this section we make remarks and describe some notation
%%for formal Laurent series.
%%See in remarks at the end of \cite[Section 1, p.523]{Pa-Ra12}\omycite{Pa-Ra12}.
%%Let $R$ be a ring and $q$ be an indeterminant. We let $R((q))$ denote the
%%formal Laurent series in $q$ with coefficients in $R$. These are
%%series of the form
%%$$
%%f = \sum_{n\in\mathbb{Z}} a_n \, q^n,
%%$$
%%such that $a_n \ne 0$ for at most finitely many $n < 0$. For $f\ne0$
%%we define the order of $f$ (with respect to $q$) as the smallest integer
%%$N$ such that $a_N\ne0$ and write $\ord_q(f)$. We note that if $f$
%%is a modular function this coincides with $\ord(f,\infty)$.
%%If $t$ and $f\in R((q)$ then $t$-order of
%%$$
%%F = f \circ t = \sum_{n\in\mathbb{Z}} a_n \, t^n,
%%$$
%%where $t = \sum_{n\in\mathbb{Z}} b_n \, q^n$, is defined to be the
%%smallest integer $N$ such that $a_N\ne0$ and write $N=\ord_t(F)$.
%%For example, if
%%$$
%%f = q^{2} + 2 q^3 + \cdots,
%%$$
%%and
%%$$
%%t = q^{-1} + 3  + 5 q + \cdots,
%%$$
%%then
%%$\ord_q(f) = 2$, $\ord_q(t)=-1$ and $\ord_t(F)=2$ and $\ord_q(F) = -2$.

%SECTION 3%%%%%%%%%%%%%%%%%%%%%%%%%%%%%%%%%%%%%%%%%%%%%%%%%%%%%%%%%%%%%%%%
\section{The rank parity function modulo powers of $7$}
\omylabel{sec:rankparity5}
\subsection{A Generating Function}
\omylabel{subsec:genfunc}

In this section we prove an identity for the generating function of
$$
a_f(n/7) - a_f(7n-2),
$$
where it is understood that $a_f(n)=0$ if $n$ is not a non-negative integer.

\begin{theorem}We have
\omylabel{thm:af7thm}
\begin{align}
f(q^7)-q^2f_5(q)=\frac{J_7^3}{J_2^2}\left(\frac{J_1^3J_7^3}{J_2^3J_{14}^3}+6q^2\frac{J_{14}^4J_1^4}{J_2^4J_7^4}\right).
\mylabel{eq:af7id}
\end{align}
\end{theorem}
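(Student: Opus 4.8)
The plan is to prove \eqn{af7id} in two stages: first to show that the left-hand side, although it is built from the mock theta function $f$, is in fact a weakly holomorphic modular form of weight $\tfrac12$, and then to verify the resulting eta-product identity by the valence-formula algorithm of Section \sect{modfuncs}. The first move is to rewrite the left-hand side operator-theoretically. With $q=\exp(2\pi i\tau)$, the term $f(q^7)$ is the image of $f$ under $\tau\mapsto 7\tau$, while the series $q^2 f_5(q)=\sum_n a_f(7n-2)\,q^n$ is exactly $U_7\!\left(q^2 f(q)\right)$, the operator of \eqn{Updef} applied to the shifted series $q^2 f(q)$ (indeed the coefficient of $q^{7n}$ in $q^2 f(q)$ is $a_f(7n-2)$). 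Thus the left-hand side is a fixed linear combination of a $\tau\mapsto 7\tau$ image and a twisted $U_7$-image of the single function $f$. This is the crucial structural point: each of $f(7\tau)$ and $U_7(q^2 f)$ is separately non-modular (mock), but the particular combination in \eqn{af7id} is engineered so that the non-holomorphic parts cancel.

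To make this cancellation rigorous I would invoke Zwegers' completion of $f$ to a harmonic Maass form of weight $\tfrac12$, whose shadow is a unary theta function, and track the action of $\tau\mapsto 7\tau$ and of $U_7$ on the non-holomorphic Eichler-integral part; the shadows cancel in $f(q^7)-q^2 f_5(q)$, so this combination is a genuine weakly holomorphic modular form of weight $\tfrac12$ on $\Gamma_0(N)$ for a suitable $N$ (the products $J_1,J_2,J_7,J_{14}$ together with the substitution $q\mapsto q^7$ point to $N=98$), with a definite multiplier. Equivalently, one may bypass the analytic completion by citing the classical Atkin--Swinnerton-Dyer dissection of the rank generating function modulo $7$, which already expresses the residue-class generating functions $\sum_n a_f(7n+k)q^n$ as explicit theta quotients; this route turns the whole of \eqn{af7id} into a pure theta/eta-product identity. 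Either way, a short computation with Theorem \thm{etamodthm}, together with the weight count (both eta-quotients on the right have total $\eta$-exponent $1$, since $3+6-5-3=1$ and $-1+4+4-6=1$, hence weight $\tfrac12$), confirms that the right-hand side is a holomorphic eta-quotient of the same weight, level, and multiplier.

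With both sides known to lie in the same finite-dimensional space of weight-$\tfrac12$ forms, I would finish by reducing to weight $0$ and applying the valence formula. Dividing the asserted identity by one fixed weight-$\tfrac12$ eta-quotient (say the first summand on the right) produces an identity $g(\tau)\equiv 0$ of modular \emph{functions} on $\Gamma_0(98)$. I would then run the five-step algorithm of Section \sect{modfuncs}: locate the inequivalent cusps and their fan widths by Theorem \thm{chualang} and Lemma \lem{fanw}; compute the order of each eta-quotient at every cusp by Theorem \thm{ordthm}; assemble the constant $B$ of Theorem \thm{valcor}; and verify $\ORD(g,\infty,\Gamma_0(98))>-B$ by comparing the first finitely many $q$-expansion coefficients, where the left-hand coefficients are read off from the explicit series for $f$ (and from the dissection). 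The valence formula \eqn{valform}, in the form of Theorem \thm{valcor}, then forces $g\equiv 0$ and hence \eqn{af7id}.

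The main obstacle is the middle step: proving that the mock/non-holomorphic contributions of $f(q^7)$ and of $q^2 f_5(q)$ cancel, so that the left-hand side is honestly modular with a \emph{determinable} weight, level, and multiplier. Once that modularity is secured --- either via Zwegers' transformation theory or via a known $7$-dissection of the rank generating function --- the remainder is the routine, if lengthy, cusp-by-cusp order computation and the finite coefficient check supplied by the machinery of Section \sect{modfuncs}.
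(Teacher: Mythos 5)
Your proposal is correct in substance, but it is not the paper's own proof of Theorem \thm{af7thm} --- it is essentially the alternative route that the paper only mentions in the Remark immediately after the theorem (``this theorem can also be proved from Theorem \thm{f0Up}'') and then carries out in Section \sect{w-5-7}. The paper's actual proof is purely classical $q$-series: it starts from Watson's Lambert-series representation \eqn{fqid} of $f(q)$ \cite{Wa36a}, applies Chan's generalized Lambert series identities \cite[Theorem 2.1]{Ch05} to obtain a $7$-dissection in which the piece $q^{-2}f(q^{49})$ appears explicitly, uses the classical $7$-dissection \eqn{q-expand-49} of $J_1$ to set up the linear system \eqn{7-eq0}--\eqn{7-eq6}, solves it for $g_5$, and closes with the single theta-quotient identity \eqref{remark} verified by the \texttt{thetaids} machinery; no mock modularity enters anywhere. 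Your route --- reading the left side as $f(q^7)-U_7\bigl(q^2f(q)\bigr)$, completing $f$ \`a la Zwegers \cite{Zw01}, cancelling the non-holomorphic Eichler-integral parts because $\stroke{G}{U_7^*}=\chi_6(7)G(7\tau)=G(7\tau)$, multiplying by an eta-quotient to reach weight $0$, and finishing with the valence formula --- is precisely the content of Lemma \ref{lemUpG}, Lemma \ref{lemh0}, Theorem \thm{f0Up} and equation \eqn{f7q}, and \eqn{f7q} is equivalent to \eqn{af7id} after multiplying both sides by $-J_7^3/J_{14}^2$. The trade-off: your approach works uniformly for all primes $p\ge5$ and feeds directly into the Atkin--Lehner argument for $\omega(q)$, while the paper's Lambert-series proof stays entirely inside elementary theta/Lambert series at the cost of a sizable linear system. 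Three cautions on your version. First, the natural level is $\Gamma_0(4p)=\Gamma_0(28)$, not $\Gamma_0(98)$. Second, your proposed bypass via the Atkin--Swinnerton-Dyer dissection does not work as stated: \cite{At-SwD} dissect the rank generating function at primitive $5$th and $7$th roots of unity, not at $z=-1$, and the $z=-1$ dissection is exactly what forces the use of Chan-type identities (which is what the paper's own proof does). Third, the cusp analysis is heavier than your sketch suggests: Ligozat's Theorem \thm{ordthm} bounds only the eta-quotient terms of your function $g$, so the orders of the mock-derived term at the cusps of $\Gamma_0(28)$ must be established separately (the paper's condition (iii), handled by an argument as in \cite[Section 5]{Ga19a}); this, not the final coefficient check, is the real technical cost of your approach.
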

\begin{remark}
We note that this theorem can also be proved from Theorem \mythm{f0Up}.
\end{remark}

\begin{proof}
From Watson \cite[p.64]{Wa36a}\omycite{Wa36a} we have
\begin{align}
f(q)&=\frac{2}{(q;q)_\infty}\sum_{n=-\infty}^{\infty}\frac{(-1)^nq^{n(3n+1)/2}}{1+q^n}.
\omylabel{eq:fqid}
\end{align}

We find that
\begin{align*}
\sum_{n=-\infty}^{\infty}\frac{(-1)^nq^{n(3n+1)/2+4n}}{1+q^{7n}}&
=\sum_{n=-\infty}^{\infty}\frac{(-1)^nq^{n(3n+1)/2+2n}}{1+q^{7n}},\\
\sum_{n=-\infty}^{\infty}\frac{(-1)^nq^{n(3n+1)/2+5n}}{1+q^{7n}}&
=\sum_{n=-\infty}^{\infty}\frac{(-1)^nq^{n(3n+1)/2+n}}{1+q^{7n}},\\
\sum_{n=-\infty}^{\infty}\frac{(-1)^nq^{n(3n+1)/2+6n}}{1+q^{7n}}&
=\sum_{n=-\infty}^{\infty}\frac{(-1)^nq^{n(3n+1)/2}}{1+q^{7n}}.
\nonumber
\end{align*}
By \cite[Theorem 2.1]{Ch05}\omycite{Ch05} we have
\begin{align}
\sum_{n=-\infty}^{\infty}\frac{(-1)^nq^{n(3n+1)/2}}{1+q^{7n}}
=&-P(q^7,-q^7;q^{49})+q^{-6}P(q^{14},-q^7;q^{49})-q^{-9}P(q^{21},-q^7;q^{49})\mylabel{eq:f-cong7-0}\\
&+\frac{J_1}{J_{49}}\sum_{n=-\infty}^{\infty}\frac{(-1)^nq^{(147n^2+49n)/2-7}}{1+q^{49n-7}},\nonumber\\
\sum_{n=-\infty}^{\infty}\frac{(-1)^nq^{n(3n+1)/2+n}}{1+q^{7n}}
=&P(q^{21},-q^{21};q^{49})-q^{3}P(q^{14},-q^{28};q^{49})+q^{9}P(q^{7},-q^{28};q^{49})\mylabel{eq:f-cong7-1}\\
&-\frac{J_1}{J_{49}}\sum_{n=-\infty}^{\infty}\frac{(-1)^nq^{(147n^2-7n)/2-7}}{1+q^{49n-14}},\nonumber\\
\sum_{n=-\infty}^{\infty}\frac{(-1)^nq^{n(3n+1)/2+2n}}{1+q^{7n}}
=&P(q^{14},-q^{14};q^{49})-q^{6}P(q^{7},-q^{14};q^{49})-q^{-3}P(q^{21},-q^{14};q^{49})\mylabel{eq:f-cong7-2}\\
&-\frac{J_1}{J_{49}}\sum_{n=-\infty}^{\infty}\frac{(-1)^nq^{(147n^2+147n)/2+13}}{1+q^{49n+14}},\nonumber\\
\sum_{n=-\infty}^{\infty}\frac{(-1)^nq^{n(3n+1)/2+3n}}{1+q^{7n}}
=&q^{-9}P(q^{7},-1;q^{49})-q^{-15}P(q^{14},-1;q^{49})+q^{-18}P(q^{21},-1;q^{49})\mylabel{eq:f-cong7-3}\\
&-\frac{J_1}{J_{49}}\sum_{n=-\infty}^{\infty}\frac{(-1)^nq^{(147n^2+49n)/2-2}}{1+q^{49n}}\nonumber,
\end{align}
where
\beq
P(a,b;q)=\frac{[a,a^2;q]_\infty (q;q)_\infty^2}{[b/a,ab,b;q]_\infty}.
\omylabel{eq:Pabqdef}
\eeq

From \omyeqn{fqid}-\omyeqn{f-cong7-3}, and noting that 
$P(q^7,-q^7;q^{49})=P(q^{14},-q^{14};q^{49})$
we have
\begin{align}
f(q)
=&\frac{2}{(q;q)_\infty}\sum_{n=-\infty}^{\infty}\frac{(-1)^nq^{n(3n+1)/2}}{1+q^n}\mylabel{eq:f-7-dissection}\\
=&\frac{2}{(q;q)_\infty}\sum_{n=-\infty}^{\infty}
  \frac{(-1)^nq^{n(3n+1)/2}(1-q^n+q^{2n}-q^{3n}+q^{4n}-q^{5n}+q^{6n})}{1+q^{7n}}\nonumber\\
=&\frac{2}{(q;q)_\infty}\sum_{n=-\infty}^{\infty}\frac{(-1)^nq^{n(3n+1)/2}(2-2q^n+2q^{2n}-q^{3n})}{1+q^{7n}}\nonumber\\
=&\frac{2}{J_1}(2q^{-6}P(q^{14},-q^7;q^{49})-2q^{-9}P(q^{21},-q^7;q^{49})+2q^{3}P(q^{14},-q^{28};q^{49})\nonumber\\
&-2q^{9}P(q^{7},-q^{28};q^{49})-2q^{6}P(q^{7},-q^{14};q^{49})-2q^{-3}P(q^{21},-q^{14};q^{49})\nonumber\\
&-q^{-9}P(q^{7},-1;q^{49})+q^{-15}P(q^{14},-1;q^{49})-q^{-18}P(q^{21},-1;q^{49})-\frac{J_7^4}{J_{14}^2})\nonumber\\
&+\frac{4}{J_{49}}\sum_{n=-\infty}^{\infty}\frac{(-1)^nq^{(147n^2+49n)/2-7}}{1+q^{49n-7}}
+\frac{4}{J_{49}}\sum_{n=-\infty}^{\infty}\frac{(-1)^nq^{(147n^2-7n)/2-7}}{1+q^{49n-14}}\nonumber\\
&-\frac{4}{J_{49}}\sum_{n=-\infty}^{\infty}\frac{(-1)^nq^{(147n^2+147n)/2+13}}{1+q^{49n+14}}
+\frac{1}{q^2}f(q^{49})\nonumber.
\end{align}

We let
\begin{align}
\mylabel{eq:gq-defn}
g(q):=&\frac{2}{J_1}(2q^{-6}P(q^{14},-q^7;q^{49})-2q^{-9}P(q^{21},-q^7;q^{49})+2q^{3}P(q^{14},-q^{28};q^{49})\\
&-2q^{9}P(q^{7},-q^{28};q^{49})-2q^{6}P(q^{7},-q^{14};q^{49})-2q^{-3}P(q^{21},-q^{14};q^{49})
\nonumber\\
&-q^{-9}P(q^{7},-1;q^{49})+q^{-15}P(q^{14},-1;q^{49})-q^{-18}P(q^{21},-1;q^{49})-\frac{J_7^4}{J_{14}^2}),
\nonumber
\end{align}
write the $7$-dissection of $g(q)$ as
\begin{align}
g(q)=g_0(q^7) +q\,g_1(q^7)+\cdots+q^6\,g_6(q^7).
\omylabel{eq:gq7}
\end{align}

From \omyeqn{fqid}, \omyeqn{f-7-dissection} and \omyeqn{gq7}, replacing $q^7$ by $q$, we have
\begin{align}\mylabel{eq:compare-f-7}
\sum_{n=0}^{\infty}a_f(7n+5)q^n=\frac{1}{q^2}f(q^7)+g_5(q)
\end{align}
after dividing both sides by $q^5$ and replacing $q^7$ by $q$.

The 7-dissection of $J_1$ is well-known
\begin{align}\mylabel{eq:q-expand-49}
J_1=J_{49}\times\left(A(q^7)-q-B(q^7)q^2+\frac{q^5}{A(q^7)B(q^7)}\right),
\end{align}
where
$$
A(q):=\frac{J_{2,7}}{J_{1,7}}, \quad
B(q):=\frac{J_{3,7}}{J_{2,7}}.
$$
See for example \cite[Lemma 3.18]{Ga88b}\omycite{Ga88b}.

From \omyeqn{gq-defn}, \omyeqn{compare-f-7} and \omyeqn{q-expand-49},
\begin{align}
&J_{49}(g_0(q^7)+qg_1(q^7)+\cdots+q^6g_6(q^7))\left(A(q^7)-qB(q^{7})-q^2+\frac{q^5}{A(q^7)B(q^{7})}\right)\mylabel{eq:g-expand}\\
=&2(2q^{-6}P(q^{14},-q^7;q^{49})-2q^{-9}P(q^{21},-q^7;q^{49})+2q^{3}P(q^{14},-q^{28};q^{49})\nonumber\\
&-2q^{9}P(q^{7},-q^{28};q^{49})-2q^{6}P(q^{7},-q^{14};q^{49})-2q^{-3}P(q^{21},-q^{14};q^{49})\nonumber\\
&-q^{-9}P(q^{7},-1;q^{49})+q^{-15}P(q^{14},-1;q^{49})-q^{-18}P(q^{21},-1;q^{49})-\frac{J_7^4}{J_{14}^2})\nonumber.
\end{align}

By expanding the left side of \omyeqn{g-expand} and comparing both sides
according to the residue of the exponent of $q$ modulo 7,
we obtain 7 equations:
\begin{align}
\mylabel{eq:7-eq0}
&A(q^7)g_0+\frac{q^7g_2}{A(q^7)B(q^{7})}-q^7g_5-q^7B(q^{7})g_6=\frac{2J_7^4}{J_{14}^2J_{49}},\\
&-B(q^7)g_0+A(q^7)g_1+\frac{q^7g_3}{A(q^7)B(q^7)}-q^7g_6=\frac{4P(q^{14},-q^7)}{q^7J_{49}},\\
&g_0+B(q^7)g_1-A(q^7)g_2-\frac{q^7g_4}{A(q^7)B(q^7)}=\frac{4q^7P(q^7,-q^{21})}{J_{49}},\\
&g_1+B(q^7)g_2-A(q^7)g_3-\frac{q^7g_5}{A(q^7)B(q^7)}=\frac{2P(q^{21},-1)-4q^{21}P(q^{14},-q^{21})}{q^{21}J_{49}},\\
&g_2+B(q^7)g_3-A(q^7)g_4-\frac{q^7g_6}{A(q^7)B(q^7)}=\frac{4P(q^{21},-q^{14})}{q^7J_{49}},\\
&\frac{-g_0}{A(q^7)B(q^7)}+g_3+B(q^7)g_4-A(q^7)g_5=\frac{2P(q^7,-1)+4P(q^{21},-q^7)}{q^{14}J_{49}},\\
\mylabel{eq:7-eq6}
&\frac{g_1}{A(q^7)B(q^7)}-g_4-B(q^7)g_5+A(q^7)g_6=\frac{2P(q^{14},-1)-4q^{21}P(q^{7},-q^{14})}{q^{21}J_{49}}.
\end{align}
where $g_j=g_j(q^7)$ for $0\le j \le 6$.

Solving these equations we find that
\begin{align*}
g_5(q)=&\frac{1}{H}\big\{(A^6B^9+4A^7B^7+3A^8B^5-A^9B^3+3A^4B^6q+8A^5B^4q\\
&-4A^6B^2q-4A^2B^3q^2-3A^3Bq^2+q^3)A^2B^3X_0\\
&+(A^6B^9+3A^7B^7+A^8B^5-2A^3B^8q-3A^4B^6q+6A^5B^4q\\
&+AB^5q^2+2A^2B^3q^2-q^3)A^3B^2X_1\\
&+(A^7B^7+2A^8B^5+A^3B^8q+2A^4B^6q+3A^5B^4q+A^6B^2q\\
&-6A^2B^3q^2+3A^3Bq^2+q^3)A^3B^3X_2\\
&+(A^{10}B^8+A^{11}B^6+A^6B^9q+4A^8B^5q+6A^4B^6q^2\\
&-A^5B^4q^2-5A^2B^3q^3+q^4)ABX_3\\
&+(A^9B^5+A^4B^8q+6A^5B^6q+2A^6B^4q-3A^2B^5q^2+3A^3B^3q^2\\
&-A^4Bq^2+B^2q^3-2Aq^3)A^3B^3X_4\\
&+(A^9B^3-A^3B^8q-4A^4B^6q+A^5B^4q+5A^6B^2q\\
&+AB^5q^2+6A^3Bq^2-q^3)A^4B^4X_5\\
&+(A^5B^{11}+5A^6B^9+6A^7B^7-A^8B^5-A^4B^6q\\
&-4A^2B^3q^2+A^3Bq^2+q^3)qA^2B^2X_6\big\},
\end{align*}
where $A:=A(q)$, $B:=B(q)$,

\begin{align*}
H:=&-A^7B^{14}q+A^{14}B^7-7A^8B^{12}q-14A^9B^{10}q+7A^{11}B^6q-8A^7B^7q^2\\
\nonumber
&+14A^8B^5q^2+14A^4B^6q^3-7A^2B^3q^4+q^5,
\end{align*}
and $X_0$ -- $X_6$ are the right sides of 
\omyeqn{7-eq0} -- \omyeqn{7-eq6} (respectively) after replacing $q^7$ by $q$.
Then using the third author's \texttt{thetaids} \textsc{MAPLE} package, see
\begin{center}
\url{http://qseries.org/fgarvan/qmaple/thetaids/}
\end{center}
we can prove,
\begin{align}
\label{remark}
H=\frac{J_1^8}{J_7^8}A^7B^7,
\end{align}
and then\begin{align}
\mylabel{eq:f5g}
g_5(q)=-\frac{J_7^3}{J_2^2}\left(\frac{J_1^3J_7^3}{q^2J_2^3J_{14}^3}+6\frac{J_1^4J_{14}^4}{J_2^4J_7^4}\right).
\end{align}

From \omyeqn{compare-f-7} and \omyeqn{f5g} we have
$$
f(q^7)-\sum_{n=0}^{\infty}a_f(7n-2)q^n=q^2g_5(q)=\frac{J_7^3}{J_2^2}\left(\frac{J_1^3J_7^3}{J_2^3J_{14}^3}+6q^2\frac{J_1^4J_{14}^4}{J_2^4J_7^4}\right),
$$
which is our result \eqref{eq:af7id}.
\end{proof}
\subsection{A Fundamental Lemma}
\omylabel{subsec:fundlem5}
We need the following fundamental lemma,
whose proof follows easily from Theorem \thm{modeq}.
\begin{lemma}[A Fundamental Lemma]
\omylabel{lem:fun7}
Suppose $u=u(\tau)$, and $j$
is any integer. Then
\begin{align*}
{U_7}(u\,t^j) =\sum_{l=0}^{6}a_l(\tau)\,{U_7}(u\,t^{j+l-7}),
\end{align*}
where $t=t(\tau)$ is defined in \omyeqn{t7def} and the $a_j(\tau)$ are given
in \omyeqn{a0}--\omyeqn{a6}.
\end{lemma}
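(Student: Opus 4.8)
The plan is to obtain the identity directly from the modular equation of Theorem~\thm{modeq}, combined with the multiplicative property~\eqn{u71} of the $U_7$ operator. At the outset I would fix the notational convention implicit in the statement: the symbol $a_l(\tau)$ abbreviates $a_l(t(\tau))$, the polynomial $a_l$ of \eqn{a0}--\eqn{a6} evaluated at $t=t(\tau)$, and correspondingly $a_l(t(7\tau))$ denotes that same polynomial evaluated at $t(7\tau)$. Since each $a_l$ is a polynomial and $t(7\tau)$ is a formal power series in $q^7$, the quantity $a_l(t(7\tau))$ is itself a formal power series in $q^7$; this is the feature that will let us pull the coefficients through $U_7$.

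The first step is purely algebraic. I would rewrite \eqn{modeq} in the form
$$
t(\tau)^7 = \sum_{l=0}^{6} a_l(t(7\tau))\, t(\tau)^l,
$$
and then multiply both sides by $u(\tau)\, t(\tau)^{\,j-7}$, obtaining
$$
u\, t^j = \sum_{l=0}^{6} a_l(t(7\tau))\, u\, t^{\,j+l-7}.
$$
This is an identity of formal Laurent series valid for every integer $j$, negative $j$ included, since all manipulations are carried out in the ring of formal Laurent series introduced in Section~\sect{intro} and no convergence is required. The second step is to apply $U_7$ to both sides and invoke its linearity, which reduces the claim to evaluating each summand $U_7\bigl(a_l(t(7\tau))\, u\, t^{\,j+l-7}\bigr)$ separately.

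The one point deserving care is the evaluation of these summands. Because $a_l(t(7\tau))$ has the form $h(7\tau)$ with $h(\tau)=a_l(t(\tau))$, the multiplicative property~\eqn{u71} applies with $f = u\, t^{\,j+l-7}$ and gives
$$
U_7\bigl(a_l(t(7\tau))\, u\, t^{\,j+l-7}\bigr) = a_l(t(\tau))\, U_7\bigl(u\, t^{\,j+l-7}\bigr).
$$
Summing over $0\le l\le 6$ then yields exactly the asserted identity. The main (indeed essentially the only) obstacle is the bookkeeping in this last step: one must verify that the inner argument really is $t$ evaluated at $7\tau$, so that \eqn{u71} can be invoked with $h(\tau)=a_l(t(\tau))$, and confirm that the coefficient that emerges outside $U_7$ is $a_l(t(\tau))$ rather than $a_l(t(7\tau))$. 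Everything else is a one-line substitution into Theorem~\thm{modeq} followed by the linearity of $U_7$.
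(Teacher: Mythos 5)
Your proof is correct and is essentially identical to the paper's own argument: the paper proves Lemma \lem{fun7} in one line by multiplying \eqn{modeq} by $u\,t^{j-7}$ and applying $U_7$, which is exactly your derivation. The only difference is that you spell out the step the paper leaves implicit, namely that \eqn{u71} applies because each $a_l(t(7\tau))$ is of the form $h(7\tau)$ with $h(\tau)=a_l(t(\tau))$, so the coefficient emerging outside $U_7$ is $a_l(t(\tau))$.
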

\begin{proof}
The result follows easily from \omyeqn{modeq} by multiplying both
sides by $u\,t^{j-7}$ and applying $U_7$.
\end{proof}

We can check for each $a_j(t)$ that there exist integers $s(j,l)$ satisfying
\begin{align}
a_j(t)=\sum_{l=1}^{7}s(j,l)7^{[(7l+j-4)/4]}t^l.
\mylabel{eq:aj}
\end{align}

Let $g=\sum_{n}a_nt^n,g\neq 0$, be such that $a_n=0$ for almost all $n<0$. Then the order of $g$ is the smallest integer $N$ such that $a_N\neq 0$, and we write $N=ord_t(g)$.

\begin{lemma}\label{lem1}
Let $u,v_1,v_2,v_3:\mathbb{H}\rightarrow \mathbb{C}$ and $l\in \mathbb{Z}$. Suppose for $l\leq k\leq l+6$ and $i=1,2,3$ there exist Laurent polynomials $p_k^{(i)}(t)\in \mathbb{Z}[t,t^{-1}]$ such that
\begin{align}
U_7(ut^k)=v_1p_k^{(1)}(t)+v_2p_k^{(2)}(t)+v_3p_k^{(3)}(t),
\mylabel{eq:lem11}
\end{align}
and
\begin{align}
ord_t(p_k^{(i)}(t))\geq \left[\frac{k+s_i}{7}\right],
\mylabel{eq:lem12}
\end{align}
for fixed integers $s_i$. Then there exist families of Laurent polynomials $p_k^{(i)}(t)\in \mathbb{Z}[t,t^{-1}]$, $k\in \mathbb{Z}$, such that \omyeqn{lem11} and \omyeqn{lem12} hold for all $k\in \mathbb{Z}$.
\end{lemma}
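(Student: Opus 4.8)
The plan is to extend the representations from the initial block $l\le k\le l+6$ to every integer by a two-directional induction driven by the recursion of Lemma~\ref{lem:fun7}. Written at a general exponent $k$, that recursion reads
\[
U_7(u\,t^k)=\sum_{m=0}^{6}a_m(t)\,U_7(u\,t^{k-7+m}),
\]
and it yields two propagation rules. Going \emph{upward}, if each of $U_7(u\,t^{k-7}),\dots,U_7(u\,t^{k-1})$ already has the shape \eqref{eq:lem11}, then so does $U_7(u\,t^k)$, with
\[
p_k^{(i)}(t)=\sum_{m=0}^{6}a_m(t)\,p_{k-7+m}^{(i)}(t).
\]
Going \emph{downward}, since $a_0(t)=t$ by \eqref{eq:a0}, I isolate the lowest term and set
\[
p_{k-7}^{(i)}(t)=t^{-1}\Big(p_k^{(i)}(t)-\sum_{m=1}^{6}a_m(t)\,p_{k-7+m}^{(i)}(t)\Big);
\]
multiplying back by $t$ and using that $t=t(\tau)\ne0$ on $\uhp$ recovers \eqref{eq:lem11} at the exponent $k-7$ from its validity at $k-6,\dots,k$. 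Because each $a_m(t)\in\mathbb{Z}[t]$, both rules keep the coefficients in $\mathbb{Z}[t,t^{-1}]$, so, taking $\{l,\dots,l+6\}$ as base case and sliding a window of seven consecutive known entries, \eqref{eq:lem11} propagates to all $k\in\mathbb{Z}$.

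The real content is that the order bound \eqref{eq:lem12} survives both rules, and the decisive algebraic input is that every $a_m(t)$ is divisible by $t$, i.e.\ $\ord_t(a_m(t))\ge1$; this is visible from \eqref{eq:aj}, whose summation index runs from $1$ to $7$. For the upward rule I bound each summand by
\[
\ord_t\big(a_m(t)\,p_{k-7+m}^{(i)}(t)\big)\ge 1+\big[\tfrac{k-7+m+s_i}{7}\big],
\]
and, since this is nondecreasing in $m$, its minimum over $0\le m\le6$ is attained at $m=0$; by the elementary identity $[\tfrac{x-7}{7}]=[\tfrac{x}{7}]-1$ that minimum is exactly $[\tfrac{k+s_i}{7}]$, which is the desired bound for $p_k^{(i)}$. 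For the downward rule the same two ingredients give $\ord_t\big(t^{-1}p_k^{(i)}\big)\ge[\tfrac{k+s_i}{7}]-1=[\tfrac{(k-7)+s_i}{7}]$ and $\ord_t\big(t^{-1}a_m(t)\,p_{k-7+m}^{(i)}\big)\ge[\tfrac{k-7+m+s_i}{7}]\ge[\tfrac{(k-7)+s_i}{7}]$ for $m\ge1$, so $p_{k-7}^{(i)}$ meets its bound as well.

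I expect the only delicate point to be this floor-function bookkeeping: one must check that the single unit gained from $\ord_t(a_m)\ge1$ precisely offsets the shift of the exponent by $7$, and that the minimizing index in the upward step is the endpoint $m=0$ rather than an interior value. Once that arithmetic is fixed, the induction is purely formal: there is no convergence or well-definedness issue, since every step takes place in $\mathbb{Z}[t,t^{-1}]$ and uses only the identity of Lemma~\ref{lem:fun7} together with $t\ne0$.
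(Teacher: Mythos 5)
Your proposal is correct and takes essentially the same route as the paper's proof: an upward induction driven by the recursion of Lemma~\ref{lem:fun7}, with the decisive fact that each $a_m(t)$ is divisible by $t$, so the unit gain in $t$-order exactly offsets the shift $\left[\tfrac{x-7}{7}\right]=\left[\tfrac{x}{7}\right]-1$ in the floor bound. The only difference is that you write out the downward step explicitly (solving for the lowest-index term via $a_0(t)=t$ and $t\neq0$ on $\uhp$), whereas the paper dismisses it as analogous.
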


\begin{proof}
Let $N>l+6$ be an integer and assume by induction that there are families of Laurent polynomials $p_k^{(i)}(t)$, $i\in {1,2,3}$, such that \omyeqn{lem11} and \omyeqn{lem12} hold for $l\leq k\leq N-1$. Suppose
\begin{align*}
p_k^{(i)}(t)=\sum_{n\geq [(k+s_i)/7]}c_i(k,n)t^n,\text{ }1\leq k\leq N-1,
\end{align*}
with integers $c_i(k,n)$. Applying Lemma \ref{lem:fun7} we obtain:
\begin{align*}
U_7(ut^N)&=\sum_{j=0}^{6}a_j(t)U_7(ut^{N+j-7})\\
&=\sum_{j=0}^{6}a_j(t)\sum_{i=1}^{3}v_i\sum_{n\geq [(N+j-7+s_i)/7]}c_i(N+j-7,n)t^n\\
&=\sum_{i=1}^{3}v_i\sum_{j=0}^{6}a_j(t)t^{-1}\sum_{n\geq [(N+j+s_i)/7]}c_i(N+j-7,n-1)t^n.
\end{align*}
Recalling the fact that $a_j(t)t^{-1}$ for $0\leq j\leq 6$ is a polynomial 
of $t$, this determines Laurent polynomials $P_N^{(i)}(t)$ with the desired 
properties. The induction proof for $N<l$ is analogous.    
\end{proof}

\begin{lemma}\label{lem2}
Let $u,v_1,v_2,v_3:\mathbb{H}\rightarrow \mathbb{C}$ and $l\in \mathbb{Z}$. Suppose for $l\leq k\leq l+6$ and $i=1,2,3$ there exist Laurent polynomials $p_k^{(i)}(t)\in \mathbb{Z}[t,t^{-1}]$ such that
\begin{align}
U_7(ut^k)=v_1p_k^{(1)}(t)+v_2p_k^{(2)}(t)+v_3p_k^{(3)}(t),
\mylabel{eq:lem21}
\end{align}
where
\begin{align}
p_k^{(i)}(t)=\sum_{n}c_i(k,n)7^{[\frac{7n-k+r_i}{4}]}t^n,
\mylabel{eq:lem22}
\end{align}
with integers $r_i$ and $c_i(k,n)$. Then there exist families of Laurent polynomials $p_k^{(i)}(t)\in \mathbb{Z}[t,t^{-1}]$, $k\in \mathbb{Z}$, of the form \omyeqn{lem22} for which property \omyeqn{lem21} holds for all $k\in \mathbb{Z}$.
\end{lemma}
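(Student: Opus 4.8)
The plan is to mirror the two-sided induction of Lemma \ref{lem1}, but to replace the bookkeeping on $t$-orders by bookkeeping on $7$-adic valuations of coefficients. The only structural inputs are the Fundamental Lemma \ref{lem:fun7} and the explicit shape \eqref{eq:aj} of the coefficients, namely $a_j(t)=\sum_{l=1}^{7}s(j,l)\,7^{[(7l+j-4)/4]}t^l$ with $s(j,l)\in\mathbb{Z}$. I take the hypotheses \eqref{eq:lem21}--\eqref{eq:lem22} for $l\le k\le l+6$ as the base case and propagate them to all larger, and then all smaller, $k$.

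For the forward step I fix $N>l+6$ and assume \eqref{eq:lem21}--\eqref{eq:lem22} for all $l\le k\le N-1$. Applying Lemma \ref{lem:fun7} with exponent $N$ gives $U_7(u\,t^N)=\sum_{j=0}^{6}a_j(t)\,U_7(u\,t^{N+j-7})$, and since $N-7\le N+j-7\le N-1$ every argument falls under the inductive hypothesis. Substituting the expansions and collecting the $v_i$ yields $U_7(u\,t^N)=\sum_{i=1}^{3}v_i\,P_N^{(i)}(t)$ with $P_N^{(i)}(t)=\sum_{j=0}^{6}a_j(t)\,p_{N+j-7}^{(i)}(t)\in\mathbb{Z}[t,t^{-1}]$. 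It then suffices to verify, term by term, that each coefficient of $P_N^{(i)}$ carries the $7$-power required by \eqref{eq:lem22} with the same $r_i$.

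The heart of the argument is a single floor inequality. Writing $M=7n-N+r_i$, the coefficient of $t^n$ contributed by the $t^l$-term of $a_j$ and the $t^{n-l}$-term of $p_{N+j-7}^{(i)}$ carries the $7$-exponent $[\tfrac{7l+j-4}{4}]+[\tfrac{7(n-l)-(N+j-7)+r_i}{4}]$. Setting $p=7l+j-4$, the second argument collapses to $\tfrac{M-p+3}{4}$, so writing $p=4[p/4]+\rho$ with $\rho\in\{0,1,2,3\}$ one finds
\[
\left[\frac{p}{4}\right]+\left[\frac{M-p+3}{4}\right]=\left[\frac{M+3-\rho}{4}\right]\ge\left[\frac{M}{4}\right],
\]
the inequality holding because $3-\rho\ge0$. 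Hence every term is divisible by $7^{[(7n-N+r_i)/4]}$, so $P_N^{(i)}$ has the desired form and the forward step closes. The backward step ($N<l$) is analogous: using $a_0(t)=t$ from \eqref{eq:a0}, I solve the Fundamental Lemma for its lowest term to get $U_7(u\,t^N)=t^{-1}\bigl(U_7(u\,t^{N+7})-\sum_{l=1}^{6}a_l(t)\,U_7(u\,t^{N+l})\bigr)$, where all arguments exceed $N$. The leading term $t^{-1}U_7(u\,t^{N+7})$ shifts the exponent by exactly $-1$ and so meets the threshold precisely, while the remaining terms are governed by the identical floor computation.

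The main obstacle — and the only place the specific constants matter — is this exponent bookkeeping: the ``$-4$'' in \eqref{eq:aj}, the ``$+r_i$'' shift in \eqref{eq:lem22}, and the factor $7$ built into $t\mapsto t^7$ under $U_7$ must conspire so that the two floors combine to $[\tfrac{M+3-\rho}{4}]$ with $3-\rho\ge0$. The naive estimate $[x]+[y]\ge[x+y]-1$ is too lossy here (it already fails when $4\mid M$), so one genuinely needs the exact cancellation above rather than a generic bound. Once the inequality is in hand, factoring $7^{[(7n-k+r_i)/4]}$ out of each coefficient produces the integers $c_i(k,n)$ demanded by \eqref{eq:lem22}, completing the induction in both directions.
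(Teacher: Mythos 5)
Your proof is correct and takes essentially the same route as the paper's: a two-sided induction on $k$ driven by Lemma \ref{lem:fun7} and the explicit form \eqref{eq:aj} of the $a_j(t)$, with the induction step closed by the floor inequality $\left[\tfrac{7l+j-4}{4}\right]+\left[\tfrac{7(n-l)-(N+j-7)+r_i}{4}\right]\ge\left[\tfrac{7n-N+r_i}{4}\right]$, which the paper derives from the generic bound $\left[\tfrac{a}{4}\right]+\left[\tfrac{b}{4}\right]\ge\left[\tfrac{a+b-3}{4}\right]$ and you obtain by the equivalent exact evaluation $\left[\tfrac{p}{4}\right]+\left[\tfrac{M-p+3}{4}\right]=\left[\tfrac{M+3-\rho}{4}\right]$. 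Your explicit backward step (solving the Fundamental Lemma for its lowest term using $a_0(t)=t$) simply fills in what the paper leaves as ``analogous,'' and does so correctly.
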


\begin{proof}
Suppose for an integer $N>l+6$ there are families of Laurent polynomials $p_k^{(i)}(t)$, $i\in {1,2,3}$, of the form \omyeqn{lem22} satisfying property \omyeqn{lem21} for $l\leq k\leq N-1$. We proceed by mathematical induction on $N$. Applying Lemma \ref{lem:fun7} and using the induction base \omyeqn{lem21} and \omyeqn{lem22} we obtain:
\begin{align*}
U_7(ut^N)=\sum_{j=0}^{6}a_j(t)\sum_{i=1}^{3}v_i\sum_{n}c_i(N+j-7,n)7^{[\frac{7n-(N+j-7)+r_i}{4}]}t^n.
\end{align*}
Utilizing \omyeqn{aj} Lemma \ref{lem:fun7} we have                   
\begin{align*}
U_7(ut^N)&=\sum_{j=0}^{6}\sum_{l=1}^{7}s(j,l)7^{[\frac{7l+j-4}{4}]}t^l\sum_{i=1}^{3}v_i\sum_{n}c_i(N+j-7,n)7^{[\frac{7n-(N+j-7)+r_i}{4}]}t^n\\
&=\sum_{i=1}^{3}v_i\sum_{j=0}^{6}\sum_{l=1}^{7}\sum_{n}s(j,l)c_i(N+j-7,n-l)7^{[\frac{7(n-l)-(N+j-7)+r_i}{4}]+[\frac{7l+j-4}{4}]}t^n.
\end{align*}
The induction step is completed by simplifying the exponent of $7$ as follows:
\begin{align*}
&\left[\frac{7(n-l)-(N+j-7)+r_i}{4}]+[\frac{7l+j-4}{4}\right]\\
\geq &\left[\frac{7(n-l)-(N+j-7)+r_i+7l+j-4-3}{4}\right]\\
=&\left[\frac{7n-N+r_i}{4}\right].
\end{align*}
The induction proof for $N<l$ is analogous.          
\end{proof}

\subsection{Proof of  Theorem \thm{mainthm} }
%%\begin{proof}
The proof depends  on the forty-two fundamental relations listed in the 
Appendix \ref{funr-7}. These identities can be
proved using the algorithm described in 
\cite[Section 2C, pp.8-9]{Ch-Ch-Ga20}.
From Theorem \ref{thm:af7thm} we have
\begin{align}
\mylabel{eq:gen7}
\sum_{n=0}^{\infty}(a_f(n/7)-a_f(7n-2))q^n
=\frac{J_7^3}{J_2^2}\left(\frac{J_1^3J_7^3}{J_2^3J_{14}^3}
  +6q^2\frac{J_{14}^4J_1^4}{J_2^4J_7^4}\right).
\end{align}
For $f:\mathbb{H}\rightarrow \mathbb{C}$ we define $U_A(f)$ and 
$U_B(f):\mathbb{H}\rightarrow \mathbb{C}$ by
$$
U_A(f):=U_7(Af),\quad U_B(f):=U_7(Bf),
$$
where
$$
A:=\frac{q^8J_{98}^2}{J_2^2},\quad B:=\frac{J_1^3}{q^6J_{49}^3}.
$$
Define
$$
L_0:=7p_0+p_1,
$$ and for $\alpha\ge0$ define     
$$
L_{2\alpha+1}=U_A(L_{2\alpha}), \quad L_{2\alpha+2}=U_B(L_{2\alpha+1}),
$$
where
$$
p_0:=\frac{qJ_{14}^4J_1^4}{J_7^4J_2^4}, \quad 
p_1:=\frac{J_1^3J_7^3}{qJ_2^3J_{14}^3}-p_0.
$$
Using \omyeqn{Updeffls}, \omyeqn{u71} and \omyeqn{gen7}, it is easy to 
verify that for $\alpha\ge0$              we have
\begin{align*}
L_{2\alpha}&=\frac{J_2^2}{qJ_7^3}\sum_{n=0}^{\infty}A(7^{2\alpha}n+\lambda_{2\alpha})q^n,\\
L_{2\alpha+1}&=\frac{qJ_{14}^2}{J_1^3}\sum_{n=0}^{\infty}A(7^{2\alpha+1}n+\lambda_{2\alpha+1})q^n,
\end{align*}
where 
$$
A(n):=a_f(n/7)-a_f(7n-2)
$$ and 
$$
\lambda_{2\alpha}=\lambda_{2\alpha+1}=\frac{7}{24}(1-7^{2\alpha}).
$$ 
%% HERE
Following \cite{Pa-Ra12} we call a 
map $a \,:\, \mathbb{Z} \longrightarrow \mathbb{Z}$ a 
\textit{discrete function} if it has finite support.
We define
\begin{align*}
&X_A\\
&:=
\left\{\sum_{k=0}^{\infty}r_1(k)7^{[\frac{7k}{4}]}t^k
   +p_0\sum_{k=0}^{\infty}r_2(k)7^{[\frac{7k}{4}]}t^k
   +p_1\sum_{k=1}^{\infty}r_3(k)7^{[\frac{7k-3}{4}]}t^k\,:\,
   \mbox{each $r_j$ is a discrete function}\right\},\\
&X_B\\
&:=\left\{\sum_{k=1}^{\infty}r_1(k)7^{[\frac{7k-5}{4}]}t^k
     +p_0\sum_{k=1}^{\infty}r_2(k)7^{[\frac{7k-5}{4}]}t^k
     +p_1\sum_{k=2}^{\infty}r_3(k)7^{[\frac{7k-8}{4}]}t^k\,:\,
   \mbox{each $r_j$ is a discrete function}\right\},
\end{align*}
We will prove that for $\alpha>0$:
\beq
\mylabel{eq:7-L2a}
L_{2\alpha}\in 7^\alpha X_A,
\eeq
where for a set $X$ and a number $k$
$$
kX:=\{kx:x\in X\}.
$$

Firstly from Appendix \ref{funr-7} we see that 
that in each case there is an integer $l$ and 
discrete functions $a_{k,u}^{(i)}(n)$ and $b_{k,u}^{(i)}(n)$ 
for $l \le k \le l + 6$ such that
\begin{align}
\mylabel{eq:UA7k}
U_A(ut^k)=&\sum_{n\geq [(k+7)/7]}a_{k,u}^{(0)}(n)7^{[\frac{7n-k-5}{4}]}t^n+p_0\sum_{n\geq [(k+7)/7]}a_{k,u}^{(1)}(n)7^{[\frac{7n-k-5}{4}]}t^n\\
\nonumber
&+\sum_{n\geq [(k+14)/7]}a_{k,u}^{(2)}(n)7^{[\frac{7n-k-8}{4}]}t^n,\\
\mylabel{eq:UB7k}
U_B(ut^k)=&\sum_{n\geq [k/7]}b_{k,u}^{(0)}(n)7^{[\frac{7n-k+5}{4}]}t^n+p_0\sum_{n\geq [k/7]}b_{k,u}^{(1)}(n)7^{[\frac{7n-k+5}{4}]}t^n\\
\nonumber
&+\sum_{n\geq [(k+6)/7]}b_{k,u}^{(2)}(n)7^{[\frac{7n-k+2}{4}]}t^n,
\end{align}
where $u$ is one of $1$, $p_0$ or $p_1$. 
Then using Lemma \ref{lem1} and Lemma \ref{lem2}, 
we find that \omyeqn{UA7k} and \omyeqn{UB7k} hold for all $k\in \mathbb{N}$. 
Next, we prove \omyeqn{7-L2a} inductively by proving the following
three statements:   
\begin{align*}
&L_1 \in X_B,\\
&g \in X_B \text{ implies } U_B(g)\in 7X_A, \quad \mbox{and}\\
&g \in X_A \text{ implies } U_A(g)\in X_B.
\end{align*}
Let $k=0$ in \omyeqn{UA7k} we can see that
\begin{align*}
L_1=&U_A(L_0)=7U_A(p_0)+U_A(p_1)\\
=&\sum_{n=1}^{\infty}r_1(n)7^{[\frac{7n-5}{4}]}t^n+p_0\sum_{n=1}^{\infty}r_2(n)7^{[\frac{7n-5}{4}]}t^n+\sum_{n=2}^{\infty}r_3(n)7^{[\frac{7n-8}{4}]}t^n \in X_B,
\end{align*}
with some discrete functions $r_i$. Assume that $g\in X_B$. There are discrete functions $r_i$ such that
\begin{align*}
g=\sum_{k=1}^{\infty}r_1(k)7^{[\frac{7k-5}{4}]}t^k+p_0\sum_{k=1}^{\infty}r_2(k)7^{[\frac{7k-5}{4}]}t^k+p_1\sum_{k=2}^{\infty}r_3(k)7^{[\frac{7k-8}{4}]}t^k.
\end{align*}
This implies that
\begin{align}
\mylabel{eq:7UBg}
U_B(g)=\sum_{k=1}^{\infty}r_1(k)7^{[\frac{7k-5}{4}]}U_B(t^k)+\sum_{k=1}^{\infty}r_2(k)7^{[\frac{7k-5}{4}]}U_B(p_0t^k)+\sum_{k=2}^{\infty}r_3(k)7^{[\frac{7k-8}{4}]}U_B(p_1t^k).
\end{align}
Each sum in \omyeqn{7UBg} can be written in the form $7g_1$ for 
some $g_1\in X_A$. Since the proofs are similar we only consider the first sum.
From \omyeqn{UB7k}
\begin{align}
\mylabel{eq:7ubup}
\sum_{k=1}^{\infty}r_1(k)7^{[\frac{7k-5}{4}]}U_B(t^k)=&\sum_{k=1}^{\infty}\sum_{n=0}^{\infty}r_1(k)(b_{k,1}^{(0)}(n)+b_{k,1}^{(1)}(n))7^{[\frac{7k-5}{4}]+[\frac{7n-k+5}{4}]}t^n\\
\nonumber
&+\sum_{k=1}^{\infty}\sum_{n=1}^{\infty}r_1(k)b_{k,1}^{(2)}(n)7^{[\frac{7k-5}{4}]+[\frac{7n-k+2}{4}]}t^n.
\end{align}
Observe that for $k=1$:
\begin{align*}
&\left[\frac{7k-5}{4}\right]+\left[\frac{7n-k+5}{4}\right]=\left[\frac{7n+4}{4}\right]=\left[\frac{7n}{4}\right]+1,\\
&\left[\frac{7k-5}{4}\right]+\left[\frac{7n-k+2}{4}\right]=\left[\frac{7n+1}{4}\right]=\left[\frac{7n-3}{4}\right]+1,
\end{align*}
and for $k>1$:
\begin{align*}
&\left[\frac{7k-5}{4}\right]+\left[\frac{7n-k+5}{4}\right]\geq\left[\frac{7n+6k-3}{4}\right]\geq\left[\frac{7n+9}{4}\right]\geq\left[\frac{7n}{4}\right]+1,\\
&\left[\frac{7k-5}{4}\right]+\left[\frac{7n-k+2}{4}\right]\geq\left[\frac{7n+6k-6}{4}\right]\geq\left[\frac{7n+6}{4}\right]\geq\left[\frac{7n-3}{4}\right]+1.
\end{align*}
Hence the right hand side of \omyeqn{7UBg} can be written in the form 
$7g_1$ for some $g_1\in X_A$. 
The statement that $g\in X_A$ implies $U_A(g)\in X_B$ can be proved analogously. 
So that we have proved \omyeqn{7-L2a} which implies that
\begin{align}
\mylabel{eq:7-A2a}
&A(7^{2\alpha}n+\lambda_{2\alpha})\equiv 0\pmod{7^\alpha},
\end{align}
and noting that $7^{2\alpha+1}n+\lambda_{2\alpha+1}$ is a subsequence of $7^{2\alpha}n+\lambda_{2\alpha}$, we have
\begin{align}
\mylabel{eq:7-A2a+1}
&A(7^{2\alpha+1}n+\lambda_{2\alpha+1})\equiv 0\pmod{7^\alpha}.
\end{align}
Congruences \omyeqn{7-A2a} and \omyeqn{7-A2a+1} are 
each cases of \omyeqn{rmod7} after replacing $2\alpha$ and $2\alpha+1$ by $\alpha$, 
where noting that for $\alpha>0$:
\begin{align*}
A(7^{2\alpha}n+\lambda_{2\alpha})=&a_f(7^{2\alpha-1}n+\lambda_{2\alpha}/7)-a_f(7^{2\alpha+1}n+7\lambda_{2\alpha}-2)\\
=&a_f(7^{2\alpha-1}n+\delta_{2\alpha-1})-a_f(7^{2\alpha+1}n+\delta_{2\alpha+1}),
\end{align*}
and
\begin{align*}
A(7^{2\alpha+1}n+\lambda_{2\alpha+1})=&a_f(7^{2\alpha}n+\lambda_{2\alpha+1}/7)-a_f(7^{2\alpha+2}n+7\lambda_{2\alpha+1}-2)\\
=&a_f(7^{2\alpha}n+\delta_{2\alpha})-a_f(7^{2\alpha+2}n+\delta_{2\alpha+2}).
\end{align*}
This completes the proof of  Theorem \thm{mainthm}.
%%\end{proof}

%SECTION 4%%%%%%%%%%%%%%%%%%%%%%%%%%%%%%%%%%%%%%%%%%%%%%%%%%%%%%%%%%%%%%%%
\section{The crank parity function modulo powers of $7$}
\omylabel{sec:crankparity7}
\subsection{Preliminary Lemmas}
We denote
\begin{align*}
A:=\frac{\eta(\tau)^3\eta(98\tau)^2}{\eta(2\tau)^2\eta(49\tau)^3}=\frac{q^2J_1^3J_{98}^2}{J_2^2J_{49}^3},
\end{align*}
and for any $g:\mathbb{H}\rightarrow \mathbb{C}$, we define
\begin{align*}
U^{(1)}(g):=U_7(Ag),\text{ }U^{(0)}(g):=U_7(g),
\end{align*}
where $U_p(f)$ is defined by \eqn{Updef}.
%% $$
%% U_p(f):=\stroke{f}{U_p} = \frac{1}{p} \sum_{j=0}^p \stroke{f}{\MAT{1/p}{j/p}{0}{1}}
%%  = \frac{1}{p} \sum_{j=0}^p f\left(\frac{\tau +j}{p}\right).
%% $$
Let $L_0:=1$ and for $\alpha\geq0$,
\begin{align*}
L_{2\alpha+1}:=U^{(1)}(L_{2\alpha}), \text{ }L_{2\alpha+2}:=U^{(0)}(L_{2\alpha+1}).
\end{align*}

Using \omyeqn{Updeffls} and \omyeqn{u71}, it is easy to verify that 
\begin{align*}
L_{2\alpha}
=&\frac{J_2^2}{J_1^3}\sum_{n=0}^{\infty}\beta(7^{2\alpha} n+\delta_{2\alpha})q^n,
\quad\mbox{for $\alpha\ge1$ and}\\
L_{2\alpha+1}
=&\frac{J_{14}^2}{J_7^3}\sum_{n=0}^{\infty}\beta(7^{2\alpha+1} n+\delta_{2\alpha+1})q^n,
\end{align*}
for $\alpha\ge0$.
Since $\frac{J_2^2}{J_1^3}$ and $\frac{J_{14}^2}{J_7^3}$ have leading coefficient $1$, 
the congruence \eqn{betamod7} is equivalent to
\begin{align}
L_{\alpha+2}\equiv K_\alpha L_\alpha\pmod{7^\alpha}.
\mylabel{eq:main1}
\end{align}

In order to prove \omyeqn{main1}, 
we use the forty-two fundamental relations in Appendix \ref{funcr-7}.
Again these identities can be
proved using the algorithm described in 
\cite[Section 2C, pp.8-9]{Ch-Ch-Ga20}.
We note  $t=\eta(7\tau)^4/\eta(\tau)^4$ and
\begin{align*}
p_0:=\frac{\eta(14\tau)^4\eta(\tau)^4}{\eta(7\tau)^4\eta(2\tau)^4}=\frac{qJ_{14}^4J_1^4}{J_7^4J_2^4},
\end{align*}
\begin{align*}
p_1:=\frac{1}{7}\left(\frac{\eta(14\tau)\eta(\tau)^7}{\eta(7\tau)\eta(2\tau)^7}-1\right)=\frac{1}{7}\left(\frac{J_{14}J_1^7}{J_7J_2^7}-1\right).
\end{align*}
It is clear the $q$-expansion of $p_0$ has integer coefficients.          
Let $u(q):=\frac{J_1}{J_2}$. Since $u(q^7)\equiv u(q)^7\pmod 7$, 
the $q$-expansion of $p_1=\frac{1}{7}(\frac{u(q)^7}{u(q^7)}-1)$ 
also has integer coefficients. 
To prove Theorem \thm{crankthm7}, some lemmas are needed.

\begin{lemma}
For $j=0$, $1$, $2$ there exist discrete functions of $n$, 
$a_{k,i}(n,j)$ and $b_{k,i}(n,j)$ such that
\begin{align}
U^{(1)}(p_0t^k)=&p_0\sum_{n\geq[k/7]}a_{k,0}(n,0)7^{[\frac{7n-k+1}{4}]}t^n+p_1\sum_{n\geq[k/7]}a_{k,0}(n,1)7^{[\frac{7n-k+5}{4}]}t^n
\mylabel{eq:uak}\\
&+\sum_{n\geq[(k+7)/7]}a_{k,0}(n,2)7^{[\frac{7n-k+1}{4}]}t^n,
\nonumber
\end{align}
\begin{align}
U^{(1)}(p_1t^k)=&p_0\sum_{n\geq[k/7]}a_{k,1}(n,0)7^{[\frac{7n-k+1}{4}]}t^n+p_1\sum_{n\geq[k/7]}a_{k,1}(n,1)7^{[\frac{7n-k+4}{4}]}t^n
\mylabel{eq:uak1}\\
&+\sum_{n\geq[(k+7)/7]}a_{k,1}(n,2)7^{[\frac{7n-k+1}{4}]}t^n,
\nonumber
\end{align}
\begin{align}
\mylabel{eq:uak2}
U^{(1)}(t^k)=&p_0\sum_{n\geq[k/7]}a_{k,2}(n,0)7^{[\frac{7n-k+1}{4}]}t^n+p_1\sum_{n\geq[k/7]}a_{k,2}(n,1)7^{[\frac{7n-k+5}{4}]}t^n\\
&+\sum_{n\geq[(k+7)/7]}a_{k,2}(n,2)7^{[\frac{7n-k+1}{4}]}t^n,
\nonumber
\end{align}
\begin{align}
\mylabel{eq:ubk}
U^{(0)}(p_0t^k)=&p_0\sum_{n\geq[(k+7)/7]}b_{k,0}(n,0)7^{[\frac{7n-k-1}{4}]}t^n+p_1\sum_{n\geq[k/7]}b_{k,0}(n,1)7^{[\frac{7n-k+2}{4}]}t^n\\
&+\sum_{n\geq[(k+7)/7]}b_{k,0}(n,2)7^{[\frac{7n-k-1}{4}]}t^n,
\nonumber
\end{align}
\begin{align}
\mylabel{eq:ubk1}
U^{(0)}(p_1t^k)=&p_0\sum_{n\geq[(k+7)/7]}b_{k,1}(n,0)7^{[\frac{7n-k-1}{4}]}t^n+p_1\sum_{n\geq[k/7]}b_{k,1}(n,1)7^{[\frac{7n-k+2}{4}]}t^n\\
&+\sum_{n\geq[(k+7)/7]}b_{k,1}(n,2)7^{[\frac{7n-k-1}{4}]}t^n,
\nonumber
\end{align}
\begin{align}
U^{(0)}(t^k)=\sum_{n\geq[(k+6)/7]}b_{k,2}(n,2)7^{[\frac{7n-k-1}{4}]}t^n.
\mylabel{eq:uck}
\end{align}
\end{lemma}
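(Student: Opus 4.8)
The plan is to mirror the argument that established the structural expansions \eqref{eq:UA7k} and \eqref{eq:UB7k} in Section \ref{sec:rankparity5}. First I would observe that each of the six left-hand sides is an instance of $U_7(u\,t^k)$ for a single function $u$: unwinding the definitions, $U^{(1)}(p_i\,t^k)=U_7(A\,p_i\,t^k)$ and $U^{(1)}(t^k)=U_7(A\,t^k)$, while $U^{(0)}$ is $U_7$ itself, so $u$ runs through $Ap_0,\,Ap_1,\,A,\,p_0,\,p_1,\,1$. Each of \eqref{eq:uak}--\eqref{eq:uck} then asserts that $U_7(u\,t^k)$ lies in the $\mathbb{Z}[t,t^{-1}]$-module generated by $1$, $p_0$ and $p_1$, with the three coefficient Laurent polynomials having a prescribed $t$-order and a prescribed power of $7$ dividing each of their coefficients. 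In other words the lemma is a closure statement: $U^{(1)}$ and $U^{(0)}$ preserve this module, with controlled support and controlled $7$-adic valuation. As a sanity check, in the simplest case \eqref{eq:uck} the function $U^{(0)}(t^k)=U_7(t^k)$ is already a polynomial in $t$ by the modular equation \eqref{eq:modeq}, so no $p_0$- or $p_1$-component occurs there.

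Next I would supply the base cases. For each identity there is an integer $l$ so that the seven consecutive values $l\le k\le l+6$ serve as a base window; taken over the six identities, these seed expansions are precisely the forty-two fundamental relations collected in Appendix \ref{funcr-7}. Each relation is an identity among eta-quotients of level dividing $98$ (with $p_1$ contributing its two eta-quotient summands), and, exactly as for the analogous relations used in Section \ref{sec:rankparity5}, each is certified by the valence-formula algorithm resting on Theorems \ref{thm:etamodthm}, \ref{thm:chualang}, \ref{thm:ordthm} and \ref{thm:valcor} together with Lemma \ref{lem:fanw}. By inspection these seed expansions have the asserted shape, which is to say they define the discrete functions $a_{k,i}(n,j)$ and $b_{k,i}(n,j)$ on the window realizing \eqref{eq:uak}--\eqref{eq:uck}.

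I would then propagate each window to all $k$ by induction, exactly as in Lemmas \ref{lem1} and \ref{lem2}. The engine is the Fundamental Lemma \ref{lem:fun7}, coming from \eqref{eq:modeq}: for fixed $u$ and any integer $N$,
\[
U_7(u\,t^{N})=\sum_{j=0}^{6}a_j(t)\,U_7\!\left(u\,t^{N+j-7}\right),
\]
expressing $U_7(u\,t^N)$ as a $\mathbb{Z}[t]$-combination of the seven expansions with exponents $N-7\le N+j-7\le N-1$; the downward direction follows by solving this relation for the $j=0$ term, where $a_0(t)=t$. Since multiplication by the $a_j(t)$ and by $t^{-1}$ keeps coefficients inside $\mathbb{Z}[t,t^{-1}]$ and does not mix the basis elements $1,p_0,p_1$, each of the six inductions is self-contained in its own $u$. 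The control on the $t$-order is then Lemma \ref{lem1} with the support offsets $s_i\in\{0,6,7\}$ read off from the ranges $n\ge[k/7]$, $n\ge[(k+6)/7]$, $n\ge[(k+7)/7]$, and the control on the power of $7$ is Lemma \ref{lem2} with the offsets $r_i$ dictated by the exponents $[\tfrac{7n-k+1}{4}]$, $[\tfrac{7n-k+5}{4}]$, $[\tfrac{7n-k+4}{4}]$, $[\tfrac{7n-k-1}{4}]$ and $[\tfrac{7n-k+2}{4}]$ occurring in \eqref{eq:uak}--\eqref{eq:uck}.

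The main obstacle is the $7$-adic bookkeeping, and it has two parts. The first is computational: one must produce and certify the forty-two seed relations, and in doing so confirm that the \emph{exact} valuation shifts $+1,+5,+4,-1,+2$ and support shifts $0,+6,+7$ are the right ones, since it is these constants that let both the base case and the induction close. The second is the floor estimate underlying Lemma \ref{lem2}: feeding the valuation $[\tfrac{7m+j-4}{4}]$ of the monomial $t^m$ in $a_j(t)$, supplied by \eqref{eq:aj}, into the inductive hypothesis and invoking
\[
\left[\tfrac{M}{4}\right]+\left[\tfrac{L}{4}\right]\ \ge\ \left[\tfrac{M+L-3}{4}\right]
\]
must reproduce the claimed exponent after the shift $k\mapsto N$. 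Because each identity carries its own offset and because $U^{(1)}$ and $U^{(0)}$ redistribute weight among $1,p_0,p_1$ differently, this inequality has to be verified case by case; once it is, the induction step is immediate and the lemma holds for all $k$.
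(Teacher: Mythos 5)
Your proposal is correct and takes essentially the same route as the paper: the forty-two fundamental relations of Appendix \ref{funcr-7} supply the base windows $l\le k\le l+6$ for each of the six expansions, and the extension to all $k$ is exactly the induction of Lemmas \ref{lem1} and \ref{lem2}, driven by the Fundamental Lemma \ref{lem:fun7} and the floor inequality $[M/4]+[L/4]\ge[(M+L-3)/4]$. The paper's own proof is in fact terser---it simply cites the appendix and Lemmas \ref{lem1}--\ref{lem2}---so your write-up is the same argument with the supporting bookkeeping (the offsets $s_i$ and $r_i$, and the role of the seed relations) made explicit.
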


\begin{proof}
%HERE
Firstly from Appendix \ref{funr-7} we see that 
that in each case there is an integer $l$ and 
discrete functions $a_{k,u}^{(i)}(n)$ and $b_{k,u}^{(i)}(n)$ 
for $l \le k \le l + 6$ such that
From Appendix \ref{funcr-7} we see that  
in each of \omyeqn{uak}-\omyeqn{uck} 
there is an integer $l$ and appropriate discrete functions for $l \le k \le l + 6$. 
Since each sum in 
\omyeqn{uak}-\omyeqn{uck}  is finite, by Lemma \ref{lem1} and 
Lemma \ref{lem2}, we have that \omyeqn{uak}-\omyeqn{uck} hold for all $k\in \mathbb{N}$.
\end{proof}

\begin{lemma}
\mylabel{lem:La}
For each $\alpha$ and $i=0$, $1$, $2$ there exist unique polynomials 
$P_i^{(\alpha)}(t)$ with 
integer coefficients, such that
\begin{align}
L_{\alpha}=p_0P_0^{(\alpha)}(t)+p_1P_1^{(\alpha)}(t)+P_2^{(\alpha)}(t).
\mylabel{eq:La}
\end{align}
\end{lemma}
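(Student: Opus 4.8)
The plan is to prove Lemma \ref{lem:La} by induction on $\alpha$, running the recursion $L_{2\alpha+1}=U^{(1)}(L_{2\alpha})$ and $L_{2\alpha+2}=U^{(0)}(L_{2\alpha+1})$ in tandem with the structure theorems \omyeqn{uak}--\omyeqn{uck}. The base case is $L_0=1$, which manifestly has the form \omyeqn{La} with $P_0^{(0)}=P_1^{(0)}=0$ and $P_2^{(0)}=1$; both $p_0$ and $p_1$ have integer $q$-expansions (as noted just before the lemma), so the representation is in the right space. The essential observation is that the set of functions expressible as $p_0 R_0(t)+p_1 R_1(t)+R_2(t)$ with $R_i\in\mathbb{Z}[t]$ is a module that is \emph{closed} under both $U^{(1)}$ and $U^{(0)}$: this is exactly what the six relations \omyeqn{uak}--\omyeqn{uck} assert, since each of $U^{(1)}(p_0t^k)$, $U^{(1)}(p_1t^k)$, $U^{(1)}(t^k)$, $U^{(0)}(p_0t^k)$, $U^{(0)}(p_1t^k)$, $U^{(0)}(t^k)$ is again a $\mathbb{Z}$-linear combination of $p_0t^n$, $p_1t^n$, and $t^n$.

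\textbf{The inductive step.} Suppose $L_\alpha=p_0P_0^{(\alpha)}(t)+p_1P_1^{(\alpha)}(t)+P_2^{(\alpha)}(t)$ with $P_i^{(\alpha)}\in\mathbb{Z}[t]$. I would apply the appropriate operator ($U^{(1)}$ if $\alpha$ is even, $U^{(0)}$ if $\alpha$ is odd) and expand by linearity, writing $P_i^{(\alpha)}(t)=\sum_k c_i(k)t^k$ with integer coefficients $c_i(k)$ and only finitely many nonzero. Then, for instance in the even case,
\begin{align*}
L_{\alpha+1}=U^{(1)}(L_\alpha)=\sum_k c_0(k)\,U^{(1)}(p_0t^k)+\sum_k c_1(k)\,U^{(1)}(p_1t^k)+\sum_k c_2(k)\,U^{(1)}(t^k).
\end{align*}
Substituting \omyeqn{uak}, \omyeqn{uak1}, \omyeqn{uak2} for the three inner operators and collecting the coefficients of $p_0t^n$, $p_1t^n$, and $t^n$ separately produces three new power series in $t$. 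Each coefficient is a finite $\mathbb{Z}$-linear combination of the discrete functions $a_{k,i}(n,j)$ (times powers of $7$, which are themselves integers), hence lies in $\mathbb{Z}$; finiteness of the supports guarantees these are genuine polynomials in $t$. This yields $P_i^{(\alpha+1)}(t)\in\mathbb{Z}[t]$ and establishes the claimed form for $L_{\alpha+1}$. The odd case is identical, using \omyeqn{ubk}, \omyeqn{ubk1}, \omyeqn{uck} in place of the $U^{(1)}$ relations.

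\textbf{Uniqueness.} For the uniqueness assertion I would argue that the three functions $1$, $p_0$, $p_1$ are linearly independent over $\mathbb{C}(t)$; equivalently, that a relation $p_0R_0(t)+p_1R_1(t)+R_2(t)=0$ with $R_i\in\mathbb{C}[t]$ forces $R_0=R_1=R_2=0$. This follows by inspecting the $q$-expansions: since $t=t(\tau)$ is a Hauptmodul for $\Gamma_0(7)$, the field of modular functions is $\mathbb{C}(t)$, and $p_0$, $p_1$ are modular functions on $\Gamma_0(14)$ which are \emph{not} in $\mathbb{C}(t)$ and are $\mathbb{C}(t)$-linearly independent of each other (one can verify this directly from their defining eta-quotients and orders at the cusps, or simply by comparing enough initial $q$-coefficients). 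Given independence, subtracting two representations of $L_\alpha$ forces the coefficient polynomials to coincide, giving uniqueness.

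\textbf{Main obstacle.} The inductive mechanics are routine bookkeeping once \omyeqn{uak}--\omyeqn{uck} are in hand; the real content is verifying those six structure relations hold for \emph{all} $k$, not merely for the seven consecutive values $l\le k\le l+6$ supplied by the Appendix. This is precisely where Lemma \ref{lem1} and Lemma \ref{lem2} enter, bootstrapping the relations from a seven-term window to every $k\in\mathbb{Z}$ via the modular equation of Theorem \thm{modeq} (through the Fundamental Lemma \lem{fun7}). The subtle point to check carefully is that the $7$-adic valuation exponents $[\tfrac{7n-k\pm c}{4}]$ recorded in \omyeqn{uak}--\omyeqn{uck} are genuinely preserved under the recursion — this is the content of the exponent estimate at the end of the proof of Lemma \ref{lem2} — so that the representation stays in the correct filtered module and the integrality of the $P_i^{(\alpha)}$ is never lost. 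Establishing that the window data actually satisfy the hypotheses of Lemmas \ref{lem1} and \ref{lem2} (the correct $s_i$ and $r_i$ matching the stated floor functions) is the one place demanding genuine care rather than formal manipulation.
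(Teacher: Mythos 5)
Your existence argument coincides with the paper's: the paper treats existence as immediate from $L_0=1$ together with the structure relations \omyeqn{uak}--\omyeqn{uck}, and your induction is just the explicit version of that observation. (The extension of those relations to all $k$ via Lemmas \ref{lem1} and \ref{lem2}, which you flag as the main obstacle, is the content of the lemma \emph{preceding} this one in the paper; likewise the $7$-adic exponent bookkeeping you mention belongs to the next lemma, not to this one. For Lemma \ref{lem:La} itself those relations may simply be cited.)

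The genuine gap is in uniqueness, which is where the paper spends essentially all of its effort. You reduce uniqueness to the $\mathbb{C}(t)$-linear independence of $1$, $p_0$, $p_1$ and then assert it, offering two verification routes, neither of which works as stated. First, ``comparing enough initial $q$-coefficients'' cannot establish independence over $\mathbb{C}(t)$: a putative relation $p_0R_0(t)+p_1R_1(t)+R_2(t)=0$ involves polynomial coefficients of unbounded degree, so no finite comparison of coefficients rules out all such relations. Second, the conditions you actually invoke --- $p_0\notin\mathbb{C}(t)$, $p_1\notin\mathbb{C}(t)$, and $p_0$, $p_1$ independent of each other --- do not logically imply that $\{1,p_0,p_1\}$ is independent: if, for instance, $p_1$ happened to equal $p_0+1$, all three of your conditions would hold, yet $p_0\cdot 1-p_1\cdot 1+1=0$ would be a nontrivial relation of exactly the kind that destroys uniqueness in \omyeqn{La}. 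What is needed, and what the paper proves, is a valuation argument at two cusps of $\Gamma_0(14)$: using Theorems \thm{etamodthm}, \thm{ordthm}, \thm{chualang} and Lemma \lem{fanw} (applied to $t$, $p_0$ and the eta-quotient $7p_1+1$, since $p_1$ itself is not an eta-quotient) one gets $\ORD(t,0,\Gamma_0(14))=-2$, $\ORD(p_0,0,\Gamma_0(14))=1$, $\ORD(p_1,0,\Gamma_0(14))=0$, and $\ORD(t,1/2,\Gamma_0(14))=-1$, $\ORD(p_0,1/2,\Gamma_0(14))=-1$, $\ORD(p_1,1/2,\Gamma_0(14))=-2$. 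At the cusp $0$ the terms $p_0t^n$ have odd orders $1-2n$ while $t^n$ and $p_1t^m$ have even orders, which forces $b_N\neq0$ in any relation whose pure-$t$ part has top coefficient $c_N\neq0$; the cusp $1/2$ then gives the contradiction $-2-N\neq-N$, and a final pass at the cusp $0$, where all remaining terms have pairwise distinct orders, kills the $a_n$ and $b_n$. Your proposal names the right tool (orders at cusps) but never runs this argument, and since the independence is the entire mathematical content of the uniqueness claim, the proof is incomplete at precisely the point the lemma is about.
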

\begin{proof}
From $L_0=1$ and \omyeqn{uak}-\omyeqn{uck}, the existence of \omyeqn{La} 
is obvious. We can check that $t$, $p_0$ and $7p_1+1$ are modular functions 
on $\Gamma_0(14)$ by Theorem \thm{etamodthm}. Using Theorem \thm{ordthm}, 
Theorem \thm{chualang} and Lemma \lem{fanw} we can calculate the order of 
$t$, $p_0$ and $p_1$ at the cusps  $0$ and $1/2$.
\begin{align*}
&\ORD(t,0,\Gamma_0(14))=-2,\quad \ORD(t,1/2,\Gamma_0(14))=-1,\\
&\ORD(p_0,0,\Gamma_0(14))=1,\quad \ORD(p_0,1/2,\Gamma_0(14))=-1,\\
&\ORD(p_1,0,\Gamma_0(14))=0,\quad \ORD(p_1,1/2,\Gamma_0(14))=-2.
\end{align*}
%HERE
Suppose that
\beq
\mylabel{eq:liner-aN}
p_0\sum_{n=0}^{M}a_nt^n+p_1\sum_{n=0}^{M}b_nt^n+\sum_{n=0}^{N}c_nt^n=0,
\eeq
and $c_N\neq 0$. 
We see 
that the order of
$$
\sum_{n=0}^{N}c_nt^n,
$$
at $0$ is $-2N$, and so is the order of
$$
p_0\sum_{n=0}^{M}a_nt^n+p_1\sum_{n=0}^{M}b_nt^n,
$$
at $0$. 
Each term $p_0t^n$ has order $1-2n$ and each term $p_1t^m$ has the different 
order  $-2m$
at $0$. This implies $b_{N}\neq 0$ and for $n,m>N$ we have $a_n=b_m=0$. 
The order at $1/2$ of
$$
p_0\sum_{n=0}^{N}a_nt^n+p_1\sum_{n=0}^{N}b_nt^n,
$$
is $-2-N$ but for 
$$
\sum_{n=0}^{N}c_nt^n,
$$
it is $-N$. This is a contradiction. It implies that there is no $n$ 
such that 
$c_n\neq 0$ and \omyeqn{liner-aN} holds. This means that   
$$
p_0\sum_{n=0}^{M}a_nt^n+p_1\sum_{n=0}^{M}b_nt^n=0.
$$
But the order of each term at $0$ is different. 
We have $a_n=b_n=0$ for all $n$, which means for each $N>0$ 
the functions $1, t,..., t^N, p_0, p_0t,..., p_0t^N, p_1, p_1t,..., p_1t^N$ are 
linear independent. Hence the expression in \omyeqn{La} is unique.
\end{proof}

We need lower-bounds for the $7$-adic order of
coefficients. 
\begin{lemma}
\mylabel{lem:L2a}
For $\alpha \geq 1$ there exist integers $d_{n,i}^{(\alpha)}$, $i=0,1,2$ such that
\begin{align}
L_{2\alpha-1}=p_0\sum_{k=0}^{\infty}d_{k,0}^{(2\alpha-1)}7^{[\frac{7k+1}{4}]}t^k+p_1\sum_{k=0}^{\infty}d_{k,1}^{(2\alpha-1)}7^{[\frac{7k+4}{4}]}t^k+\sum_{k=1}^{\infty}d_{k,2}^{(2\alpha-1)}7^{[\frac{7k+1}{4}]}t^k,
\mylabel{eq:L2a-1}
\end{align}
and
\begin{align}
L_{2\alpha}=p_0\sum_{k=1}^{\infty}d_{k,0}^{(2\alpha)}7^{[\frac{7k-1}{4}]}t^k+p_1\sum_{k=0}^{\infty}d_{k,1}^{(2\alpha)}7^{[\frac{7k+2}{4}]}t^k+\sum_{k=1}^{\infty}d_{k,2}^{(2\alpha)}7^{[\frac{7k-1}{4}]}t^k.
\mylabel{eq:L2a}
\end{align}
\end{lemma}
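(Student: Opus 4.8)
The plan is to establish both displayed expansions simultaneously by an interleaved induction on $\alpha$, propagated through the recursions $L_{2\alpha+1}=U^{(1)}(L_{2\alpha})$ and $L_{2\alpha+2}=U^{(0)}(L_{2\alpha+1})$. By Lemma \ref{lem:La} each $L_\alpha$ already has a \emph{unique} expansion $p_0P_0^{(\alpha)}(t)+p_1P_1^{(\alpha)}(t)+P_2^{(\alpha)}(t)$ with integer coefficients, so the only content of the present lemma is the asserted lower bounds on the $7$-adic valuations of those coefficients. I would phrase the two claims as membership in two ``classes''. In the \emph{odd class} the coefficient of $p_0t^k$ is divisible by $7^{[(7k+1)/4]}$ $(k\ge0)$, that of $p_1t^k$ by $7^{[(7k+4)/4]}$ $(k\ge0)$, and that of $t^k$ by $7^{[(7k+1)/4]}$ $(k\ge1)$; in the \emph{even class} the respective exponents are $[(7k-1)/4]$ $(k\ge1)$, $[(7k+2)/4]$ $(k\ge0)$, $[(7k-1)/4]$ $(k\ge1)$. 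Once each $d^{(\alpha)}_{k,i}$ is \emph{defined} as the relevant coefficient divided by its prescribed power of $7$, integrality is automatic from the valuation bound together with the integrality of the coefficients furnished by Lemma \ref{lem:La}.

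For the base case I would compute $L_1=U^{(1)}(L_0)=U^{(1)}(t^0)$ directly from \omyeqn{uak2} with $k=0$. Reading off the three channels shows $L_1$ lies in the odd class: the $p_0$- and constant-channel exponents are exactly $[(7n+1)/4]$, while the $p_1$-channel carries $7^{[(7n+5)/4]}$, one more factor of $7$ than required, so the surplus is absorbed into the integer coefficient (legitimately, since $[(7n+5)/4]\ge[(7n+4)/4]$); and the starting indices $n\ge0,\,n\ge0,\,n\ge1$, coming from the ranges $n\ge[k/7]$ and $n\ge[(k+7)/7]$ at $k=0$, match the odd class exactly. Note that $L_0=1$ is itself \emph{not} in the even class—it has a constant term—which is why the statement begins at $\alpha\ge1$ and the induction must be seeded at $L_1$.

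The inductive step consists of two implications. Assuming $L_{2\alpha-1}$ is in the odd class, apply $U^{(0)}$ term-by-term via \omyeqn{ubk}, \omyeqn{ubk1}, \omyeqn{uck}; regrouping by output channel, the coefficient of $p_0t^n$ is a finite sum whose $k$-th summand carries $7$-exponent $[(7k+1)/4]+[(7n-k-1)/4]$ and must be divisible by $7^{[(7n-1)/4]}$, i.e.
\begin{align*}
\left[\tfrac{7k+1}{4}\right]+\left[\tfrac{7n-k-1}{4}\right]\ \ge\ \left[\tfrac{7n-1}{4}\right].
\end{align*}
There is one such inequality for each input-to-output channel that actually occurs in \omyeqn{ubk}--\omyeqn{uck}, all of the shape $[(7k+a)/4]+[(7n-k+b)/4]\ge[(7n+c)/4]$; the $p_1$-input cases share the same operator exponents but a strictly larger input exponent $[(7k+4)/4]$, hence are slack. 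The reverse implication, even class $\Rightarrow L_{2\alpha+1}=U^{(1)}(L_{2\alpha})$ in the odd class, is identical in spirit using \omyeqn{uak}--\omyeqn{uak2}.

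The crux is the sharpness of these floor inequalities. The crude bound $[x]+[y]\ge[x+y]-1$ is too weak—at $k=0,\ n=1$ it already fails for the displayed inequality—so it cannot be applied blindly. The clean mechanism is monotonicity in $k$: the first floor $[(7k+a)/4]$ increases by at least $1$ when $k$ advances by $1$, while the second floor $[(7n-k+b)/4]$ decreases by at most $1$, so the left side is non-decreasing in $k$, and it suffices to check the inequality at the smallest admissible $k$, which is dictated by the starting index of the corresponding output sum ($k=0$ for the $p_1$-channels, $k=1$ for the $p_0$- and constant channels). At that endpoint the input exponent collapses ($[(7\cdot0+1)/4]=0$ in the odd case, $[(7\cdot1-1)/4]=1$ in the even case) and the bound holds there, with equality in the tight channels—which is precisely how the exponents in \omyeqn{uak}--\omyeqn{uck} are calibrated. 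Verifying this endpoint uniformly across all channels, and confirming that the mandated ranges $k\ge1$ (rather than $k\ge0$) in the two ``constant-like'' channels are exactly what prevents the loss of one power of $7$, is the delicate bookkeeping; everything else follows from Lemma \ref{lem:La} and the integrality of the discrete functions $a_{k,i}(n,j)$ and $b_{k,i}(n,j)$.
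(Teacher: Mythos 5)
Your proposal is correct and is essentially the paper's proof: the same interleaved induction seeded at $L_1$ (the paper reads $L_1=2p_0+7p_1$ off Appendix \ref{funcr-7}, you evaluate \omyeqn{uak2} at $k=0$), with each inductive step applying \omyeqn{uak}--\omyeqn{uck} term by term and checking, channel by channel, floor inequalities of the shape $\left[\tfrac{7k+a}{4}\right]+\left[\tfrac{7n-k+b}{4}\right]\ge\left[\tfrac{7n+c}{4}\right]$, with the starting indices of the sums doing exactly the structural work you describe. The only difference is how those inequalities are verified: the paper evaluates the minimal $k$ exactly and handles larger $k$ via $\left[\tfrac{x}{4}\right]+\left[\tfrac{y}{4}\right]\ge\left[\tfrac{x+y-3}{4}\right]$ plus the slack coming from $6k$, whereas your monotonicity-in-$k$ reduction to the smallest admissible $k$ is a uniform (and slightly cleaner) substitute for the same elementary computation.
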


\begin{proof}
From Appendix \ref{funcr-7} we see that 
$L_1=2p_0+7p_1$ has form given in \omyeqn{L2a-1}. 
Assume that $L_{2\alpha-1}$ has the for given in \omyeqn{L2a-1} for a fixed $\alpha$, then
\begin{align}
\mylabel{eq:sum1}
L_{2\alpha}=&U^{(0)}(L_{2\alpha-1})\\
\nonumber
&
=\sum_{k=0}^{\infty}d_{k,0}^{(2\alpha-1)}7^{[\frac{7k+1}{4}]}U^{(0)}(p_0t^k)
 +\sum_{k=0}^{\infty}d_{k,1}^{(2\alpha-1)}7^{[\frac{7k+4}{4}]}U^{(0)}(p_1t^k)\\
&+\sum_{k=1}^{\infty}d_{k,2}^{(2\alpha-1)}7^{[\frac{7k+1}{4}]}U^{(0)}(t^k).
\nonumber
\end{align}
We  wish to show that the form of each sum on the right side of \omyeqn{sum1} 
satisfies the form in \omyeqn{L2a}. Since the proofs are similar we
just consider the first sum.
From \omyeqn{ubk},
\begin{align*}
&\sum_{k=0}^{\infty}d_{k,0}^{(2\alpha-1)}7^{[\frac{7k+1}{4}]}U^{(0)}(p_0t^k)\\
=&p_0\sum_{k=0}^{\infty}\sum_{n\geq[(k+7)/7]}b_0(n,k;p_0)d_{k,0}^{(2\alpha-1)}7^{[\frac{7n-k-1}{4}]+[\frac{7k+1}{4}]}t^n\\
&+p_1\sum_{k=0}^{\infty}\sum_{n\geq[k/7]}b_1(n,k;p_0)d_{k,0}^{(2\alpha-1)}7^{[\frac{7n-k+2}{4}]+[\frac{7k+1}{4}]}t^n\\
&+\sum_{k=0}^{\infty}\sum_{n\geq[(k+7)/7]}b_2(n,k;p_0)d_{k,0}^{(2\alpha-1)}7^{[\frac{7n-k-1}{4}]+[\frac{7k+1}{4}]}t^n.
\end{align*}
For $k=0$, we have
\begin{align*}
\left[\frac{7n-k-1}{4}\right]+\left[\frac{7k+1}{4}\right]=\left[\frac{7n-1}{4}\right],
\end{align*}
\begin{align*}
\left[\frac{7n-k+2}{4}\right]+\left[\frac{7k+1}{4}\right]=\left[\frac{7n+2}{4}\right],
\end{align*}
and for $k\geq 1$, we have
\begin{align*}
\left[\frac{7n-k-1}{4}\right]+\left[\frac{7k+1}{4}\right]\geq\left[\frac{7n-k-1+7k+1-3}{4}\right]\geq\left[\frac{7n-1}{4}\right],
\end{align*}
\begin{align*}
\left[\frac{7n-k+2}{4}\right]+\left[\frac{7k+1}{4}\right]\geq\left[\frac{7n-k+2+7k+1-3}{4}\right]\geq\left[\frac{7n+2}{4}\right].
\end{align*}
So that the first sum in the right side of \omyeqn{sum1} has the form of \omyeqn{L2a}. 
Similarly, also the second and third sums have the correct form. 
Hence $L_{2\alpha}$ has the desired form.    
The proof that the correct form of $L_{2\alpha}$ implies 
the correct form of $L_{2\alpha+1}$ is analogous.
The general result follows by induction.
\end{proof}

By Lemmas \lem{La} and \lem{L2a} we let
\begin{align*}
P_i^{(\alpha)}(t):=\sum_{n=0}^{\infty}l_{n,i}^{(\alpha)}t^n,
\end{align*}
for $i=0,1,2$ be the unique polynomials such that
\begin{align*}
L_{\alpha}=p_0\sum_{n=0}^{\infty}l_{n,0}^{(\alpha)}t^n+p_1\sum_{n=0}^{\infty}l_{n,1}^{(\alpha)}t^n+\sum_{n=0}^{\infty}l_{n,2}^{(\alpha)}t^n.
\end{align*}
Next we define
\begin{align*}
D^{(\alpha)}(l_{m,i},l_{n,j}):=l_{m,i}^{(\alpha)}l_{n,j}^{(\alpha+2)}-l_{m,i}^{(\alpha+2)}l_{n,j}^{(\alpha)},
\end{align*}
and denote $\pi(n)$ be the $7$-adic order of $n$ (i.e.\ the highest power of $7$
that divides $n$).

\begin{lemma}
\mylabel{lem:lemD}
For $\alpha \geq 1$, $i,j=0,1,2$,
\begin{align}
\pi(D^{(2\alpha-1)}(l_{m,i},l_{n,j}))&\geq 2\alpha-2+m+n+\max(\lambda_i,\lambda_j),
\mylabel{eq:D2a-1}\\
\pi(D^{(2\alpha)}(l_{m,i},l_{n,j}))&\geq 2\alpha-1+m+n+\lambda_i\lambda_j,
\mylabel{eq:D2a}
\end{align}
where $\lambda_0=\lambda_2=0$ and $\lambda_1=1$.
\end{lemma}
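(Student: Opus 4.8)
The plan is to prove the two bounds \eqref{eq:D2a-1} and \eqref{eq:D2a} simultaneously by a single induction on $\alpha$, in which the odd-indexed and even-indexed bounds feed one another through one application of $U^{(1)}$ or $U^{(0)}$. Since $L_{2\alpha}=U^{(0)}(L_{2\alpha-1})$ and $L_{2\alpha+1}=U^{(1)}(L_{2\alpha})$, the passage from the level pair $\{2\alpha-2,2\alpha\}$ to $\{2\alpha-1,2\alpha+1\}$ is effected by $U^{(1)}$, and from $\{2\alpha-1,2\alpha+1\}$ to $\{2\alpha,2\alpha+2\}$ by $U^{(0)}$. Accordingly I would run the implications: the case $\alpha-1$ of \eqref{eq:D2a} yields, via $U^{(1)}$, the case $\alpha$ of \eqref{eq:D2a-1}, which in turn yields, via $U^{(0)}$, the case $\alpha$ of \eqref{eq:D2a}. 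The base case comes from the explicit data $L_0=1$, $L_1=2p_0+7p_1$, and the form of $L_2$ given by Lemma \ref{lem:L2a}, together with the trivial vanishing $D^{(\alpha)}(l_{m,i},l_{m,i})=0$, which lets me assume $(m,i)\neq(n,j)$ throughout.

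The key structural step is a bilinear transfer identity. By Lemma \ref{lem:La} each $L_\alpha$ is determined by its coordinate vector $(l_{k,i}^{(\alpha)})$ in the basis $\{p_0t^k,\,p_1t^k,\,t^k\}$, and relations \eqref{eq:uak}--\eqref{eq:uck} express the level-$(\alpha+1)$ coordinates as a fixed linear combination $l_{n,j}^{(\alpha+1)}=\sum_{k,i}c_{(n,j),(k,i)}\,l_{k,i}^{(\alpha)}$, where the matrix $c$ depends only on the parity of $\alpha$ and carries the explicit powers of $7$ recorded in those relations. Substituting this for all four factors of $D^{(\alpha)}(l_{m,i},l_{n,j})=l_{m,i}^{(\alpha)}l_{n,j}^{(\alpha+2)}-l_{m,i}^{(\alpha+2)}l_{n,j}^{(\alpha)}$ (applying the same operator to $L_{\alpha-1}\mapsto L_\alpha$ and $L_{\alpha+1}\mapsto L_{\alpha+2}$, which is legitimate since $\alpha-1$ and $\alpha+1$ share a parity) and antisymmetrizing in the two summation indices gives the exact recursion
\[
D^{(\alpha)}(l_{m,i},l_{n,j})=\sum_{(k,r)<(k',r')}\Big(c_{(m,i),(k,r)}c_{(n,j),(k',r')}-c_{(m,i),(k',r')}c_{(n,j),(k,r)}\Big)\,D^{(\alpha-1)}(l_{k,r},l_{k',r'}).
\]
Thus $D$ at the new level is a weighted sum of $D$'s at the previous level, the weights being the $2\times2$ minors of the $U$-operator matrix, and everything reduces to a $7$-adic lower bound on those minors.

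The heart of the induction is then estimating $\pi$ of each minor. Writing the exponents from \eqref{eq:uak}--\eqref{eq:uck} as $\big[\tfrac{7n-k+\varepsilon}{4}\big]$ with $\varepsilon$ the appropriate shift, I would feed the inductive bound on $D^{(\alpha-1)}(l_{k,r},l_{k',r'})$ into the recursion and combine it with the superadditivity $\big[\tfrac{A}{4}\big]+\big[\tfrac{B}{4}\big]\ge\big[\tfrac{A+B-3}{4}\big]$ already used in the proof of Lemma \ref{lem2}. The exponent is shifted upward in the $p_1$-row of each $U$-matrix, and it is precisely this boost, carried through the minor across one $U$-step, that produces the discrepancy between the $\max(\lambda_i,\lambda_j)$ in \eqref{eq:D2a-1} and the $\lambda_i\lambda_j$ in \eqref{eq:D2a}. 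For all but finitely many index configurations the $\tfrac{7n}{4}$-growth of the exponents overwhelms the linear terms $m+n-k-k'$ and already delivers more than the required gain of one power of $7$ per step, so the burden concentrates in a bounded régime of small $m,n,k,k'$.

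I expect that bounded boundary régime to be the main obstacle. There the naive product bound on a minor can fall short of the target by a power or two of $7$, and to recover the extra factor I would have to use the support restrictions $n\ge[k/7]$ and $n\ge[(k+7)/7]$ in \eqref{eq:uak}--\eqref{eq:uck} (which force many low-degree minor entries to vanish outright) together with the genuine leading-coefficient cancellation intrinsic to an antisymmetric $2\times2$ minor. Keeping the three component labels $i,j\in\{0,1,2\}$ and the two distinct shift conventions of $U^{(0)}$ versus $U^{(1)}$ under simultaneous control --- so that the $\lambda$-bookkeeping emerges correctly at each parity --- is where the estimate is most delicate, and I would organize it as a finite case check keyed to the residues of $k$ and $k'$ modulo $7$.
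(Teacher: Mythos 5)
Your skeleton matches the paper's: the same alternating induction (odd-to-even via $U^{(0)}$, even-to-odd via $U^{(1)}$), the same bilinear transfer identity --- the paper's \eqref{eq:lni}, written there as the unrestricted double sum $D^{(2\alpha)}(l_{m,i},l_{n,j})=\sum_{k,r,u,v}x_{k,u}(m,i)x_{r,v}(n,j)D^{(2\alpha-1)}(l_{k,u},l_{r,v})$ rather than in your antisymmetrized minor form --- and the same floor-function superadditivity for the $7$-adic estimates. The gap is in how the boundary cases are closed, which is the actual substance of the paper's proof. You defer this to an unspecified ``finite case check keyed to the residues of $k$ and $k'$ modulo $7$'' and expect to need ``genuine leading-coefficient cancellation intrinsic to an antisymmetric $2\times 2$ minor''. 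Neither is what closes the argument. The paper never exploits cancellation between the two products in a minor: every estimate is a term-by-term product bound, with the minimum in \eqref{eq:pid} and \eqref{eq:apid} taken over $(k,u)\neq(r,v)$ (the diagonal terms vanish identically), so in the critical configurations the relevant ``minor'' is a single product and there is nothing available to cancel against. What actually rescues the small-index regime is (i) the index-zero support of the coordinates of $L_\alpha$ themselves, guaranteed by Lemma \ref{lem:L2a} --- namely $l_{0,2}^{(2\alpha-1)}=0$, and $l_{0,i}^{(2\alpha)}=0$ for $i\neq 1$ --- which in the even step forces any surviving summand with $k=r=0$ to involve the $p_1$-component (so the inductive bound contributes $\max(\lambda_u,\lambda_v)=1$, inequality \eqref{eq:pid2}), and in the odd step rules out $k=r=0$ altogether; and (ii) the component-dependent exponent shifts in \eqref{eq:uak}--\eqref{eq:uck} ($\nu_1=2$ versus $\nu_0=\nu_2=-1$, and $\mu_{1,u}=5-\lambda_u$ versus $\mu_{0,u}=\mu_{2,u}=1$), which is precisely where the asymmetry between $\lambda_i\lambda_j$ in \eqref{eq:D2a} and $\max(\lambda_i,\lambda_j)$ in \eqref{eq:D2a-1} comes from. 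The case analysis is keyed to which of $m,n,k,r$ vanish and to the component labels $i,j,u,v$, not to residues modulo $7$. You do mention the support ranges $n\geq[k/7]$, $n\geq[(k+7)/7]$ of the transfer coefficients, which is the right neighborhood, but without isolating the coordinate vanishing (i) your term-wise bounds genuinely fall one power of $7$ short in the $k=r=0$ configurations, and your proposed substitute (minor cancellation) is not available there.

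A second, smaller issue is your base case. Seeding the induction at level $0$ (from $L_0=1$ and the form of $L_2$) makes your first step $D^{(0)}\Rightarrow D^{(1)}$, but the generic even-to-odd step uses ``$l_{0,u}^{(2\alpha)}\neq 0$ only if $u=1$'', which is false at $\alpha=0$: there $l_{0,2}^{(0)}=1$ is the only nonzero coordinate. That first step can still be pushed through by hand, using the vanishing $y_{r,v}(0,2)=0$ forced by the range $n\geq[(k+7)/7]$ in \eqref{eq:uak2}, but it does not follow from your induction step as stated. The paper sidesteps this by taking $D^{(1)}$ itself as the base case, computed directly from $L_1=2p_0+7p_1$ together with the $7$-adic form of the coefficients of $L_3$ supplied by \eqref{eq:L2a-1}.
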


\begin{proof}
Since $l_{n,i}^{(1)}=0$ except $l_{0,0}^{(1)}=2$ and $l_{0,1}^{(1)}=7$, from \omyeqn{L2a-1} we have
\begin{align*}
\pi(D^{(1)}(l_{m,i},l_{n,j}))=\infty,
\end{align*}
when $m,n>0$ and
\begin{align*}
\pi(D^{(1)}(l_{0,i},l_{n,j}))=&\pi(-D^{(1)}(l_{n,j},l_{0,i}))=\pi(l_{0,i}^{(1)}l_{n,j}^{(3)})\\
\geq& \lambda_i+\left[\frac{7n+\mu_j}{4}\right]=\lambda_i+n+\left[\frac{3n+\mu_j}{4}\right]\geq n+\max(\lambda_i,\lambda_j),
\end{align*}
when $n>0$ where $\mu_0=\mu_2=1$ and $\mu_1=4$ and
\begin{align*}
\pi(D^{(1)}(l_{0,0},l_{0,1}))=\pi(l_{0,0}^{(1)}l_{0,1}^{(3)}-l_{0,0}^{(3)}l_{0,1}^{(1)})\geq \min(l_{0,1}^{(1)},l_{0,1}^{(3)})\geq 1=\max(\lambda_0,\lambda_1),
\end{align*}
which proves \omyeqn{D2a-1} for $\alpha=1$. Suppose now that \omyeqn{D2a-1} holds for 
fixed $\alpha$. We will compare the coefficients on both sides of
\begin{align*}
&p_0\sum_{n=0}^{\infty}l_{n,0}^{(2\alpha)}t^n+p_1\sum_{n=0}^{\infty}l_{n,1}^{(2\alpha)}t^n+\sum_{n=0}^{\infty}l_{n,2}^{(2\alpha)}t^n\\
=&\sum_{k=0}^{\infty}l_{k,0}^{(2\alpha-1)}U^{(0)}(p_0t^k)+\sum_{k=0}^{\infty}l_{k,1}^{(2\alpha-1)}U^{(0)}(p_1t^k)+\sum_{k=0}^{\infty}l_{k,2}^{(2\alpha-1)}U^{(0)}(t^k).
\end{align*}
For convenience, denote $p_2:=1$ and for $u=0,1,2$ let
\begin{align*}
U^{(0)}(p_ut^k)=p_0\sum_{n}x_{k,u}(n,0)t^n+p_1\sum_{n}x_{k,u}(n,1)t^n+\sum_{n}x_{k,u}(n,2)t^n.
\end{align*}
We have
\begin{align*}
l_{n,i}^{(2\alpha)}=\sum_{k,u}x_{k,u}(n,i)l_{k,u}^{(2\alpha-1)},
\end{align*}
which shows that
\begin{align}
D^{(2\alpha)}(l_{m,i},l_{n,j})=\sum_{k,r,u,v}x_{k,u}(m,i)x_{r,v}(n,j)D^{(2\alpha-1)}(l_{k,u},l_{r,v}).
\mylabel{eq:lni}
\end{align}
From \omyeqn{ubk}-\omyeqn{uck}, we have $\pi(x_{k,u}(m,i))\geq [\frac{7m-k+\nu_i}{4}]$, where $\nu_0=\nu_2=-1$ and $\nu_1=2$. So that
\begin{align}
\mylabel{eq:pid}
&\pi(D^{(2\alpha)}(l_{m,i},l_{n,j}))\\
\nonumber
=&\pi\left(\sum_{k,r,u,v}x_{k,u}(m,i)x_{r,v}(n,j)D^{(2\alpha-1)}(l_{k,u},l_{r,v})\right)\\
\nonumber
\geq&\min_{(k,u)\neq (r,v)}\left(k+r+2\alpha-2+\max(\lambda_u,\lambda_v)+\left[\frac{7m-k+\nu_i}{4}\right]+\left[\frac{7n-r+\nu_j}{4}\right]\right)\\
=&\min_{(k,u)\neq (r,v)}\left(m+n+2\alpha-2+\max(\lambda_u,\lambda_v)+\left[\frac{3m+3k+\nu_i}{4}\right]+\left[\frac{3n+3r+\nu_j}{4}\right]\right).
\nonumber
\end{align}
Noting \omyeqn{L2a}, $l_{0,i}^{(2\alpha)}\neq 0$ only if $i=1$, which is $3m+\nu_i>1$ and also $3n+\nu_j>1$. Further more, if $i=j=1$ then $m>0$ or $n>0$ which is $3m+\nu_i\geq 5$ or $3n+\nu_j\geq 5$. Since $i=j=1$ if and only if $\lambda_i\lambda_j=1$,
\begin{align}
\left[\frac{3m+3k+\nu_i}{4}\right]+\left[\frac{3n+3r+\nu_j}{4}\right]\geq\left[\frac{3k+1}{4}\right]+\left[\frac{3r+1}{4}\right]+\lambda_i\lambda_j.
\mylabel{eq:pid1}
\end{align}
Also noting \omyeqn{L2a-1}, $l_{0,2}^{(2\alpha-1)}=0$. This implies that if $k=r=0$, then one of $u$ or $v$ is $1$, which means one of $\lambda_u$ or $\lambda_v$ is $1$. So that
\begin{align}
\max(\lambda_u,\lambda_v)+\left[\frac{3k+1}{4}\right]+\left[\frac{3r+1}{4}\right]\geq 1.
\mylabel{eq:pid2}
\end{align}
From \omyeqn{pid}-\omyeqn{pid2}
\begin{align*}
\pi(D^{(2\alpha)}(l_{m,i},l_{n,j}))&\geq 2\alpha-1+m+n+\lambda_i\lambda_j,
\end{align*}
which is \omyeqn{D2a} hold for $\alpha$. Then suppose that \omyeqn{D2a} holds for $\alpha$. Let
\begin{align*}
U^{(1)}(p_ut^k)=p_0\sum_{n}y_{k,u}(n,0)t^n+p_1\sum_{n}y_{k,u}(n,1)t^n+\sum_{n}y_{k,u}(n,2)t^n.
\end{align*}
Similar to \omyeqn{lni}, we have
\begin{align*}
D^{(2\alpha+1)}(l_{m,i},l_{n,j})=\sum_{k,r,u,v}y_{k,u}(m,i)y_{r,v}(n,j)D^{(2\alpha)}(l_{k,u},l_{r,v}).
\end{align*}
From \omyeqn{uak}-\omyeqn{uak2}, we have $\pi(y_{k,u}(m,i))\geq [\frac{7m-k+\mu_{i,u}}{4}]$, where $\mu_{0,u}=\mu_{2,u}=1$ and $\mu_{1,u}=5-\lambda_u$. So that
\begin{align}
\mylabel{eq:apid}
&\pi(D^{(2\alpha+1)}(l_{m,i},l_{n,j}))\\
\nonumber
=&\pi\left(\sum_{k,r,u,v}y_{k,u}(m,i)y_{r,v}(n,j)D^{(2\alpha)}(l_{k,u},l_{r,v})\right)\\
\nonumber
\geq&\min_{(k,u)\neq (r,v)}\left(k+r+2\alpha-1+\lambda_u\lambda_v+\left[\frac{7m-k+\mu_{i,u}}{4}\right]+\left[\frac{7n-r+\mu_{j,v}}{4}\right]\right)\\
=&\min_{(k,u)\neq (r,v)}\left(m+n+2\alpha-1+\lambda_u\lambda_v+\left[\frac{3m+3k+\mu_{i,u}}{4}\right]+\left[\frac{3n+3r+\mu_{j,v}}{4}\right]\right).
\nonumber
\end{align}
Also from \omyeqn{L2a} and then $l_{0,u}^{(2\alpha)}\neq 0$ only if $u=1$, we can see that $k\geq 1$ or $r\geq 1$ which is $3k+\mu_{i,u}\geq 4$ or $3r+\mu_{j,v}\geq 4$. When $u=v=1$,
\begin{align}
\lambda_u\lambda_v+\left[\frac{3m+3k+\mu_{i,u}}{4}\right]+\left[\frac{3n+3r+\mu_{j,v}}{4}\right]\geq 2\geq \max(\lambda_i,\lambda_j)+1.
\mylabel{eq:ij1}
\end{align}
When $u\neq 1$,($v\neq 1$ are analogous), then $k\geq 1$ and
\begin{align}
\lambda_u\lambda_v+\left[\frac{3m+3k+\mu_{i,u}}{4}\right]+\left[\frac{3n+3r+\mu_{j,v}}{4}\right]\geq\left[\frac{3m+3+\mu_{i,u}}{4}\right]+\left[\frac{3n+\mu_{j,v}}{4}\right],
\mylabel{eq:ij2}
\end{align}
if $j=1$, then $\mu_{j,v}\geq 4$ and
\begin{align}
\left[\frac{3m+3+\mu_{i,u}}{4}\right]+\left[\frac{3n+\mu_{j,v}}{4}\right]\geq 2\geq \max(\lambda_i,\lambda_j)+1,
\mylabel{eq:uv1}
\end{align}
if $i=1$, then $\mu_{i,u}\geq 5$ and
\begin{align}
\left[\frac{3m+3+\mu_{i,u}}{4}\right]+\left[\frac{3n+\mu_{j,v}}{4}\right]\geq 2\geq \max(\lambda_i,\lambda_j)+1,
\mylabel{eq:uv2}
\end{align}
if $i,j\neq 1$,
\begin{align}
\left[\frac{3m+3+\mu_{i,u}}{4}\right]+\left[\frac{3n+\mu_{j,v}}{4}\right]\geq 1\geq \max(\lambda_i,\lambda_j)+1.
\mylabel{eq:uv3}
\end{align}
From \omyeqn{uv1}-\omyeqn{uv3} we have \omyeqn{ij2}, and from \omyeqn{apid}-\omyeqn{ij2} we have
\begin{align*}
\pi(D^{(2\alpha+1)}(l_{m,i},l_{n,j}))&\geq 2\alpha+m+n+\max(\lambda_i,\lambda_j),
\end{align*}
which is \omyeqn{D2a-1} hold for $\alpha+1$. Hence \omyeqn{D2a-1} and \omyeqn{D2a} hold for all $\alpha\geq 1$.
\end{proof}

\subsection{Prove of  Theorem \thm{crankthm7}}
\begin{proof}
From Lemma \lem{lemD}, for each $(n,j)\neq(0,0)$, by $l_{0,2}^{(2\alpha-1)}=0$ we have
\begin{align}
\pi(D^{(2\alpha-1)}(l_{0,0},l_{n,j}))&\geq 2\alpha-1,
\mylabel{eq:mainr}
\end{align}
and for $(n,j)\neq(0,1)$, by $l_{0,0}^{(2\alpha)}=0$ and $l_{0,2}^{(2\alpha)}=0$ we have
\begin{align}
\pi(D^{(2\alpha)}(l_{0,1},l_{n,j}))&\geq 2\alpha.
\label{mainr1}
\end{align}
It is easy to see that for $\alpha\geq 1$,
\begin{align*}
L_{2\alpha-1}\equiv l_{0,0}^{(2\alpha-1)}p_0\pmod 7,
\end{align*}
and
\begin{align*}
L_{2\alpha}\equiv l_{0,1}^{(2\alpha)}p_1\pmod 7.
\end{align*}
Then we notice that $7\nmid l_{0,0}^{(2\alpha-1)}$ and $7\nmid l_{0,1}^{(2\alpha)}$ which can be implied from $l_{0,0}^{(1)}=2$, $U^{(0)}(p_0)\equiv 3p_1\pmod 7$ and $U^{(1)}(p_1)\equiv 2p_0\pmod 7$. Let $x_{2\alpha-1}$ be a solution of
\begin{align*}
l_{0,0}^{(2\alpha-1)}\equiv xl_{0,0}^{(2\alpha+1)}\pmod{7^{2\alpha-1}},
\end{align*}
then for $(n,i)\neq(0,0)$, from \omyeqn{mainr},
\begin{align*}
l_{n,i}^{(2\alpha-1)}l_{0,0}^{(2\alpha-1)}\equiv x_{2\alpha-1}l_{0,0}^{(2\alpha+1)}l_{n,i}^{(2\alpha-1)}\equiv x_{2\alpha-1}l_{0,0}^{(2\alpha-1)}l_{n,i}^{(2\alpha+1)}\pmod{7^{2\alpha-1}}.
\end{align*}
Cancelling $l_{0,0}^{(2\alpha-1)}$ we obtain
\begin{align}
l_{n,i}^{(2\alpha-1)}\equiv x_{2\alpha-1}l_{n,i}^{(2\alpha+1)}\pmod{7^{2\alpha-1}},
\mylabel{eq:lni2a-1}
\end{align}
Similarly, let $x_{2\alpha}$ be a solution of
\begin{align*}
l_{0,1}^{(2\alpha)}\equiv xl_{0,1}^{(2\alpha)}\pmod{7^{2\alpha}},
\end{align*}
then for $(n,i)\neq(0,1)$, from \eqref{mainr1},
\begin{align*}
l_{n,i}^{(2\alpha)}l_{0,1}^{(2\alpha)}\equiv x_{2\alpha}l_{0,1}^{(2\alpha+2)}l_{n,i}^{(2\alpha)}\equiv x_{2\alpha}l_{0,1}^{(2\alpha)}l_{n,i}^{(2\alpha+2)}\pmod{7^{2\alpha}}.
\end{align*}
Cancelling $l_{0,1}^{(2\alpha)}$ we obtain
\begin{align}
l_{n,i}^{(2\alpha)}\equiv x_{2\alpha}l_{n,i}^{(2\alpha+2)}\pmod{7^{2\alpha}}.
\mylabel{eq:lni2a}
\end{align}
From \omyeqn{La}, \omyeqn{lni2a-1} and \omyeqn{lni2a} prove \omyeqn{main1} and also \omyeqn{betamod7}.
\end{proof}

%SECTION 5%%%%%%%%%%%%%%%%%%%%%%%%%%%%%%%%%%%%%%%%%%%%%%%%%%%%%%%%%%%%%%%%
\section{The congruence  modulo powers of $5$ and $7$ for $\omega(q)$}
\omylabel{sec:w-5-7}
\subsection{The $U_P^{*}$ operator}
Let $p$ prime and $f$ be a function defined on the upper-half plane
(not necessarily a modular function).
We define  
\begin{align*}
\stroke{f}{U_p^*} 
:= \frac{1}{p} \sum_{j=0}^{p-1} \stroke{f}{\MAT{1/p}{24j/p}{0}{1}}
 = \frac{1}{p} \sum_{j=0}^{p-1} f\left(\frac{\tau +24j}{p}\right),
\end{align*}
and we note that for a function $g(\tau)$
$$
\stroke{g}{U_p^*} =\stroke{g^*}{U_p}(\tau/24) ,
$$
where $g^*(\tau)=g(24\tau)$.

\subsection{Modular properties}
Consider the following third order mock theta functions:
\begin{align*}
f(q):=&\sum_{n=0}^{\infty}\frac{q^{n^2}}{(-q;q)_n^2},\\
\omega(q):=&\sum_{n=0}^{\infty}\frac{q^{2n^2+2n}}{(q;q^2)_n^2}.
\end{align*}
Following \cite{Zw01}, we define $F:=(f_0, f_1, f_2)^T$ by:
\begin{align*}
f_0(\tau):=&q^{-1/24}f(q),\\
f_1(\tau):=&2q^{1/3}\omega(q^{\frac{1}{2}}),\\
f_2(\tau):=&2q^{1/3}\omega(q^{-\frac{1}{2}}).
\end{align*}

Also define
\begin{align*}
G(\tau):=2i\sqrt{3}\int_{-\overline{\tau}}^{i\infty}\frac{(g_1(z), g_0(z), -g_2(z))^T}{\sqrt{-i(z+\tau)}}dz,
\end{align*}
where
\begin{align*}
g_0(z):=&\sum_{n\in \mathbb{Z}}(-1)^n(n+1/3)e^{3\pi i(n+1/3)^2z},\\
g_1(z):=&-\sum_{n\in \mathbb{Z}}(n+1/6)e^{3\pi i(n+1/6)^2z},\\
g_2(z):=&\sum_{n\in \mathbb{Z}}(n+1/3)e^{3\pi i(n+1/3)^2z}.
\end{align*}

Letting $H=(h_0, h_1, h_2)^T:=F-G$, \cite[Theorem 3.6]{Zw01} gives
\begin{align}
\mylabel{eq:htau1}
H(\tau+1)=
\left(\begin{matrix}
\zeta_{24}^{-1} &0 &0\\
0 &0 &\zeta_3\\
0 &\zeta_3 &0
\end{matrix}\right)
H(\tau),
\end{align}
where $\zeta_m:=e^{2\pi i/m}$, and
\begin{align}
\mylabel{eq:htau2}
\frac{1}{\sqrt{-i\tau}}H(-1/\tau)=
\left(\begin{matrix}
0 &1 &0\\
1 &0 &0\\
0 &0 &-1
\end{matrix}\right)
H(\tau).
\end{align}

\begin{lemma}\label{lemUpG}
For each prime $p\geq 5$,
$$
\stroke{G}{U_p^*}=\chi_6(p)G(p\tau),
$$
where
$$\chi_6(p)=
\begin{cases}
1 & p\equiv 1 \pmod{6},\\
-1 & p\equiv -1 \pmod{6},\\
0 & \text{otherwise}.
\end{cases}
$$
\end{lemma}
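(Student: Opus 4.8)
The plan is to work directly from the integral definition of $G$ rather than from the transformation laws \eqref{eq:htau1}--\eqref{eq:htau2}. The three entries of the integrand vector $(g_1,g_0,-g_2)^T$ are unary theta series of weight $3/2$: after clearing denominators each is a sum $\sum_m c(m)\,e^{2\pi i m z/24}$ supported on the frequencies $m=(6n+1)^2$ (for $g_1$) or $m=4(3n+1)^2$ (for $g_0,g_2$), and each is invariant under $z\mapsto z+24$. Since $p\ge 5$ is prime we have $\gcd(p,6)=1$, so $p\equiv\pm1\pmod 6$ and $\chi_6(p)=\pm1$; the vanishing case of $\chi_6$ never occurs here, and the content of the lemma is the sign.

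First I would unfold $\stroke{G}{U_p^*}=\tfrac1p\sum_{j=0}^{p-1}G\big(\tfrac{\tau+24j}{p}\big)$ using the integral, and in the $j$-th term substitute $z=(w-24j)/p$. This sends every path to the common contour from $-\overline{\tau}$ to $i\infty$, turns the denominator into $\sqrt{-i(w+\tau)/p}$, and lets me bring the finite $j$-sum inside a single integral. Next I would expand each $g_i$ into its exponentials and apply the orthogonality relation $\tfrac1p\sum_{j=0}^{p-1}e^{-2\pi i mj/p}=1$ or $0$ according as $p\mid m$ or not, which sieves out exactly the frequencies divisible by $p$. For $g_1$ this forces $p\mid(6n+1)$, and for $g_0,g_2$ it forces $p\mid(3n+1)$; writing $6n+1=p\ell$ (resp.\ $3n+1=p\ell$) and using $\ell\equiv p^{-1}\pmod 6$ (resp.\ $\pmod 3$) I can reindex the surviving series. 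Because $p\equiv 1$ forces $\ell\equiv+1$ while $p\equiv-1$ forces $\ell\equiv-1$ in the relevant modulus, the reindexing reproduces the original theta series with an overall sign $\chi_6(p)$ together with a factor $p$, that is $\tfrac1p\sum_{j}g_i\big(\tfrac{w-24j}{p}\big)=\chi_6(p)\,p\,g_i(pw)$. Finally, the constants $2i\sqrt3\,p^{-3/2}$ produced by the substitution combine with this factor $p$ to match exactly the result of the substitution $z=pw$ in $G(p\tau)$, yielding $\stroke{G}{U_p^*}=\chi_6(p)\,G(p\tau)$.

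I expect the main obstacle to be twofold. The analytic side requires care: I must check that the branch of $\sqrt{-i(\,\cdot\,)}$ is respected under $z=(w-24j)/p$ (here $p>0$ is real, so $\sqrt{1/p}=1/\sqrt p$ is unambiguous), that all shifted contours genuinely share the endpoints $-\overline{\tau}$ and $i\infty$, and that interchanging the finite sum with the integral and with the termwise expansion is legitimate. The arithmetic side is the honest bookkeeping in the $g_0$ component, whose coefficient carries an extra $(-1)^n$: after writing $3n+1=\pm p(3m+1)$ I must show $n\equiv m\pmod 2$, which follows from $p$ being odd, so that $(-1)^n=(-1)^m$ and the extra sign does not disturb the uniform factor $\chi_6(p)$ that already appears for $g_1$ and $g_2$. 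Confirming that the \emph{same} sign $\chi_6(p)$ emerges in all three entries, so that it can be pulled out of the vector-valued integral, is the crux of the argument.
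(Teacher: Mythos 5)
Your proposal is correct and is essentially the paper's own proof: the paper establishes $\stroke{g_i}{U_p^*}=p\,\chi_6(p)\,g_i(p\tau)$ for $i=0,1,2$ by exactly your orthogonality sieve over $j$ --- including the parity check $(-1)^n=(-1)^m$ for $g_0$, which uses that $p$ is odd --- and then pushes this identity through the Eichler-type integral defining $G$. The only cosmetic difference is that the paper carries out the final substitution after rewriting $G$ in real coordinates as in \eqref{eq:Gtau}, where the square root involves a positive real quantity and the contour is a vertical ray, which disposes of the branch and contour-endpoint issues you rightly flag in the complex form.
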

\begin{proof}
We prove that for $i=0, 1, 2$,
\begin{align}
\mylabel{eq:gi0}
\stroke{g_i}{U_p^*}=p\chi_6(p)g_i(p\tau).
\end{align}
Since the proofs are similar, we only prove the case $i=0$. By definition,
$$
g_0(\tau):=\sum_{n\in \mathbb{Z}}(-1)^n(n+1/3)e^{3\pi i(n+1/3)^2\tau}.
$$
So that
\begin{align}
\mylabel{eq:gi1}
\stroke{g_0}{U_p^*}=\frac{1}{p} \sum_{j=0}^{p-1}\sum_{n\in \mathbb{Z}}(-1)^n(n+1/3)e^{3\pi i(n+1/3)^2\tau/p+3\pi i(n+1/3)^2\cdot 24j/p}.
\end{align}
Noting that $3\pi i(n+1/3)^2\cdot 24j=2 \pi i \cdot 4j(3n+1)^2$ and $4(3n+1)^2\equiv 0 \pmod p$ if and only if $3n+1\equiv 0 \pmod p$, which implies
\begin{align}
\mylabel{eq:gi2}
&\sum_{j=0}^{p-1}\sum_{\substack{n\in \mathbb{Z}\\p\nmid 3n+1}}(-1)^n(n+1/3)e^{3\pi i(n+1/3)^2\tau/p+3\pi i(n+1/3)^2\cdot 24j/p}\\
\nonumber
=&\sum_{\substack{n\in \mathbb{Z}\\p\nmid 3n+1}}\sum_{j=0}^{p-1}\zeta_p^{4j(3n+1)^2}(-1)^n(n+1/3)e^{3\pi i(n+1/3)^2\tau/p}=0.
\end{align}
Letting $3n+1=kp$ then $k=3m^*+\chi_6(p)$ for $m^*\in \mathbb{Z}$, and letting $m=\chi_6(p)m^*$ then $n+1/3=p\chi_6(p)(m+1/3)$ and $(-1)^n=(-1)^m$, by \omyeqn{gi1} and \omyeqn{gi2} we have
\begin{align*}
\stroke{g_0}{U_p^*}=&\sum_{3n+1\equiv 0 \pmod p}(-1)^n(n+1/3)e^{3\pi i(n+1/3)^2\tau/p}\\
=&\sum_{m\in \mathbb{Z}}(-1)^mp\chi_6(p)(m+1/3)e^{3\pi i(m+1/3)^2p\tau}\\
=&p\chi_6(p)g_0(p\tau).
\end{align*}
Noting that for $\tau=x+yi$
\begin{align}
\mylabel{eq:Gtau}
G(\tau)=&2i\sqrt{3}\int_{-\overline{\tau}}^{i\infty}\frac{(g_1(z), g_0(z), -g_2(z))^T}{\sqrt{-i(z+\tau)}}dz\\
\nonumber
=&-2\sqrt{3}\int_{y}^{\infty}\frac{(g_1(-x+it), g_0(-x+it), -g_2(-x+it))^T}{\sqrt{y+t}}dt.
\end{align}
Using \omyeqn{gi0}, \omyeqn{Gtau} and $g_i(\tau)=g_i(\tau+24), (i=0, 1, 2)$ we easily to find that
\begin{align*}
\stroke{G}{U_p^*}=\chi_6(p)G(p\tau)
\end{align*}
\end{proof}

From Lemma \ref{lemUpG}, we have
\begin{align}
\mylabel{eq:fuf}
\stroke{F}{U_p^*}-\chi_6(p)F(p\tau)=\stroke{H}{U_p^*}-\chi_6(p)H(p\tau).
\end{align}
This will enable us to 
find modular-properties of $f(q)$ and $\omega(q)$. 
We say a 
function $f\,:\,\mathbb{H} \longrightarrow \mathbb{C}$   
is \textit{ on $\Gamma$} if for each 
$A\in \Gamma$:
$$
\stroke{f}{A}=f;\quad\mbox{i.e.}\quad f(A\tau) = f(\tau),
$$
for all $A\in\Gamma$ and all $\tau\in\mathbb{H}$. 
We note that $H(\tau)$ is a non-holomorphic modular function.
Many properties of modular functions also hold for non-holomorphic modular 
functions. For example, Atkin and Lehner's \cite[Lemma 7]{At-Le70} holds
even if $f(\tau)$ is not holomorphic. Hence we have
\begin{lemma}
\label{lemgp}
Let $p$ be prime. 
If $f$ is on $\Gamma_0(pN)$
and $p\mid N$, then $\stroke{f}{U_p}$ is 
on $\Gamma_0(N)$.
\end{lemma}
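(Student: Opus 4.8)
This is the non-holomorphic analogue of Theorem \thm{ALUpthm} (Atkin and Lehner \cite[Lemma 7]{At-Le70}), and the plan is simply to reproduce that proof while checking that holomorphy of $f$ is never invoked. The key point is that the paper's notion of $f$ being \emph{on} $\Gamma$ imposes only the invariance relations $\stroke{f}{A}=f$ for $A\in\Gamma$, with no holomorphy or growth condition attached. Consequently it suffices to verify the single family of identities
\[
\stroke{(\stroke{f}{U_p})}{V}=\stroke{f}{U_p},\qquad V\in\Gamma_0(N),
\]
and this is a purely group-theoretic statement about the matrices defining $U_p$.

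First I would pass to integer matrices. For the weight-zero stroke action the common scalar in $\MAT{1/p}{j/p}{0}{1}$ is irrelevant, so with $\beta_j:=\MAT{1}{j}{0}{p}$ for $0\le j\le p-1$ we may write $\stroke{f}{U_p}=\tfrac1p\sum_{j=0}^{p-1}\stroke{f}{\beta_j}$. Fix $V=\MAT{a}{b}{c}{d}\in\Gamma_0(N)$; then $N\mid c$, and since $p\mid N$ this gives $p\mid c$. The heart of the argument is to produce, for each $j$, a matrix $W_j\in\Gamma_0(pN)$ and an index $\sigma(j)\in\{0,\dots,p-1\}$ with
\[
\beta_j V=W_j\,\beta_{\sigma(j)},
\]
where $\sigma$ turns out to be a permutation of $\{0,\dots,p-1\}$. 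Writing $W_j=\MAT{A}{B}{C}{D}$ and matching entries forces $A=a+jc$, $C=pc$, $D=d-c\,\sigma(j)$, and $Bp=b+jd-(a+jc)\sigma(j)$.

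Then I would check the three requirements in turn. The determinant $AD-BC$ collapses to $ad-bc=1$ by a one-line cancellation, so $W_j\in\SLZ$ automatically. The lower-left condition $pN\mid C=pc$ is just $N\mid c$, which holds. Integrality of $B$ asks that $p\mid b+jd-(a+jc)\sigma(j)$; because $p\mid c$ this reduces to $a\,\sigma(j)\equiv b+jd\pmod p$, and since $ad\equiv1\pmod p$ the class $\sigma(j)\equiv a^{-1}(b+jd)\pmod p$ is uniquely determined. As $d$ is likewise a unit mod $p$, the assignment $j\mapsto\sigma(j)$ is a bijection of $\Z/p\Z$, hence a genuine permutation. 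Invariance of $f$ under $\Gamma_0(pN)$ now gives $\stroke{f}{W_j}=f$, whence $\stroke{f}{\beta_j V}=\stroke{f}{\beta_{\sigma(j)}}$ by the cocycle rule $\stroke{f}{MS}=\stroke{\stroke{f}{M}}{S}$; summing over $j$ and reindexing by $\sigma$ yields the desired identity. The only real obstacle is the bookkeeping here: pinning down exactly where $p\mid N$ is used (it enters both via $pN\mid pc$ and via $p\mid c$, the latter making $a$ invertible mod $p$) and confirming that $\sigma$ is a bijection. Since every step is algebraic and uses no analytic input, the conclusion applies verbatim to the non-holomorphic function $H$.
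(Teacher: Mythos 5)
Your proof is correct and follows essentially the same route as the paper: the paper simply remarks that Atkin and Lehner's Lemma 7 \cite{At-Le70} is proved by a purely group-theoretic coset computation that never invokes holomorphy, and your proposal writes out exactly that computation (the decomposition $\beta_j V = W_j\,\beta_{\sigma(j)}$ with $W_j\in\Gamma_0(pN)$ and $\sigma$ an affine permutation of the residues mod $p$, using $p\mid c$ and $ad\equiv 1\pmod p$). The only difference is one of detail — you supply the matrix bookkeeping that the paper delegates to the citation.
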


If $e\parallel N$, we call the matrix
\begin{align*}
W_e=\left(\begin{matrix}
ae &b\\
cN &de
\end{matrix}\right), \qquad a, b, c, d\in \mathbb{Z}, \quad det(W_e)=e
\end{align*}
an Atkin-Lehner involution of $\Gamma_0(N)$.

\begin{lemma}[{\cite[Corollary 2.2]{Ch-La98}\omylabel{Ch-La98}}]
\label{lemeta}
Let $W_e$ be an Atkin-Lehner involution of $\Gamma_0(N)$. Let $t>0$ be such that $t|N$. Then
\begin{align*}
\eta(tW_e\tau)=\eta\left(t\frac{ae\tau+b}{cN\tau+de}\right)
=\nu_{\eta}(M)\left(\frac{cN\tau+de}{\delta}\right)^{1/2}\eta\left(\frac{et}{\delta^2}\tau\right),
\end{align*}
where $\delta=(e,t)$, $\nu_{\eta}$ is eta-multiplier and
\begin{align*}
M=\left(\begin{matrix}
a\delta &bt/\delta\\
cN\delta/et &de/\delta
\end{matrix}\right)\in SL_2(\mathbb{Z}).
\end{align*}
\end{lemma}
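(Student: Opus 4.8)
The plan is to reduce the asserted identity to the classical transformation law of $\eta$ under the full modular group, namely that for every $M=\MAT{\alpha}{\beta}{\gamma}{\epsilon}\in\SLZ$ one has $\eta(Mz)=\nu_\eta(M)\,(\gamma z+\epsilon)^{1/2}\eta(z)$, where $\nu_\eta$ is the Dedekind eta-multiplier. It therefore suffices to realize $t\,W_e\tau$ as the image under a single matrix $M\in\SLZ$ of a suitably rescaled argument. The candidate is exactly the matrix $M$ in the statement, acting on $z=\tfrac{et}{\delta^2}\tau$ with $\delta=(e,t)$; the entire proof is the verification that this decomposition is valid and that $M$ really lies in $\SLZ$.

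First I would check the Möbius identity by direct substitution. Plugging $z=\tfrac{et}{\delta^2}\tau$ into $Mz$, the powers of $\delta$, $e$, $t$ cancel to give
\begin{align*}
M\Lpar{\tfrac{et}{\delta^2}\tau}
=\frac{a\delta\cdot\tfrac{et}{\delta^2}\tau+\tfrac{bt}{\delta}}
      {\tfrac{cN\delta}{et}\cdot\tfrac{et}{\delta^2}\tau+\tfrac{de}{\delta}}
=\frac{t(ae\tau+b)}{cN\tau+de}
= t\,W_e\tau,
\end{align*}
so that $\eta(t\,W_e\tau)=\eta\Lpar{M\,\tfrac{et}{\delta^2}\tau}$. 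Next I would confirm $M\in\SLZ$. For integrality, $a\delta$ and $bt/\delta$ are integers since $\delta\mid t$, and $de/\delta$ is an integer since $\delta\mid e$; the only delicate entry is the lower-left one, $cN\delta/(et)=cN/\lcm(e,t)$, which is an integer precisely because $e\mid N$ and $t\mid N$ force $\lcm(e,t)\mid N$. For the determinant, $\det M=ade-bcN/e=(ade^2-bcN)/e=\det(W_e)/e=1$, using $\det(W_e)=e$. Hence $M\in\SLZ$.

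Finally I would apply the classical transformation law to $M$ at $z=\tfrac{et}{\delta^2}\tau$. Its automorphy factor is $\gamma z+\epsilon=\tfrac{cN\delta}{et}\cdot\tfrac{et}{\delta^2}\tau+\tfrac{de}{\delta}=\tfrac{cN\tau+de}{\delta}$, which is literally the quantity appearing in the statement, so the square root transfers verbatim and the claimed formula $\eta(t\,W_e\tau)=\nu_\eta(M)\Lpar{\tfrac{cN\tau+de}{\delta}}^{1/2}\eta\Lpar{\tfrac{et}{\delta^2}\tau}$ drops out at once. The only points genuinely requiring care are the integrality of the lower-left entry, resting on the elementary observation $\lcm(e,t)\mid N$, and the matching of branch and multiplier conventions between the classical formula and the statement; since $\gamma z+\epsilon$ coincides exactly with $(cN\tau+de)/\delta$, no branch ambiguity survives, and I expect no substantive obstacle beyond this bookkeeping.
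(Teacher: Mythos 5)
The paper gives no proof of this lemma at all; it is quoted directly from Chan and Lang (Corollary 2.2 of the cited reference), and your argument is precisely the standard derivation behind that result. All the key verifications in your proposal are correct: $M\bigl(\tfrac{et}{\delta^2}\tau\bigr)=tW_e\tau$, the lower-left entry equals $cN/\lcm(e,t)\in\Z$ because $e\mid N$ and $t\mid N$ force $\lcm(e,t)\mid N$, and $\det M=(ade^2-bcN)/e=\det(W_e)/e=1$, so the classical $\eta$-transformation law applied at $z=\tfrac{et}{\delta^2}\tau$ yields the stated formula with automorphy factor $(cN\tau+de)/\delta$.
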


\begin{lemma}[{\cite[Lemma 6]{Ch-To10}\omylabel{Ch-To10}}]
\label{lemuw}
Let $p$ be prime, $p|N$, $e\parallel N$ and $(p,e)=1$. If $f(\tau)$ is 
on $\Gamma_0(N)$ then
\begin{align*}
\stroke{(\stroke{f}{U_p})}{W_e}=\stroke{(\stroke{f}{W_e})}{U_p}.
\end{align*}
\end{lemma}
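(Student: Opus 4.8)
The plan is to reduce the operator identity to a matching of right cosets of $\Gamma_0(N)$ and then to exhibit that matching explicitly, in the spirit of the classical Atkin--Lehner computation. First I would record that, since $f$ is on $\Gamma_0(N)$, the value $\stroke{f}{M}$ depends only on the coset $\Gamma_0(N)M$, and that for $p\mid N$ one has $\stroke{f}{U_p}=\frac1p\sum_{j=0}^{p-1}\stroke{f}{\beta_j}$ with $\beta_j=\MAT{1}{j}{0}{p}$ (the same bilinear map as the matrix in \eqn{Updef}). Hence
\[
\stroke{(\stroke{f}{U_p})}{W_e}=\frac1p\sum_{j=0}^{p-1}\stroke{f}{\beta_jW_e},\qquad
\stroke{(\stroke{f}{W_e})}{U_p}=\frac1p\sum_{j=0}^{p-1}\stroke{f}{W_e\beta_j}.
\]
It then suffices to produce a permutation $\sigma$ of $\{0,\dots,p-1\}$ and matrices $\gamma_j\in\Gamma_0(N)$ with $\beta_jW_e=\gamma_jW_e\beta_{\sigma(j)}$, because then $\stroke{f}{\beta_jW_e}=\stroke{f}{W_e\beta_{\sigma(j)}}$ termwise and, $\sigma$ being a bijection, the two sums agree.

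Next I would write $N=em$ with $m=N/e$; since $e\parallel N$ we have $(e,m)=1$, and from $p\mid N$ with $(p,e)=1$ it follows that $p\mid m$. A direct multiplication gives
\[
\beta_jW_e=\MAT{ae+jcN}{b+jde}{pcN}{pde},\qquad
\gamma_j:=\beta_jW_e\,\beta_{\sigma(j)}^{-1}W_e^{-1},
\]
and $\det\gamma_j=p\cdot e\cdot p^{-1}\cdot e^{-1}=1$ automatically. Writing $j'=\sigma(j)$, the $(1,2)$-entry of $\gamma_j$ carries a factor $1/p$ whose numerator is $-j'(ae+jcN)+b+jde$; since $p\mid cN$ this is $\equiv -j'ae+b+jde\pmod p$. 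From $\det W_e=ade^2-bcN=e$ together with $p\mid N$ one gets $ade\equiv1\pmod p$, so $ae$ and $de$ are units mod $p$. Thus $j'ae\equiv b+jde\pmod p$ has a unique solution $j'\in\{0,\dots,p-1\}$, and as $j$ ranges over $\{0,\dots,p-1\}$ this $j'$ runs over all residues, so $\sigma$ is a bijection.

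I would then verify that the remaining entries of $\gamma_j$ are integral and that its lower-left entry is divisible by $N$. With $j'$ chosen as above the denominator $p$ is cleared, and a short computation shows each entry is a $\mathbb{Z}$-multiple of $e$ (using $N=em$ and $ade^2-bcN=e$), so the factor $1/e$ cancels; explicitly the lower-left entry equals $cm\,(de(p-1)+j'cN)$, which is divisible by $N=em$ because the bracket is divisible by $e$. Hence $\gamma_j\in\Gamma_0(N)$ (up to an irrelevant sign, harmless in weight $0$), giving $\Gamma_0(N)\beta_jW_e=\Gamma_0(N)W_e\beta_{\sigma(j)}$, and summing over $j$ yields the asserted identity.

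The step I expect to be the main obstacle is exactly this integrality verification: $\gamma_j$ a priori lies only in $\tfrac1{pe}M_2(\mathbb{Z})$, and one must show both denominators disappear. Clearing $p$ is what forces the definition of $\sigma$ and is where $(p,e)=1$ enters (through $ade\equiv1\pmod p$), while clearing $e$ relies on $e\parallel N$ together with $\det W_e=e$; tracking these divisibilities carefully is the only delicate point. An alternative, more structural route is to invoke that $W_e$ normalizes $\Gamma_0(N)$ and that $(p,e)=1$ places $\beta_jW_e$ and $W_e\beta_j$ in a common $\Gamma_0(N)$-double coset with matching right-coset decompositions, but the explicit bijection above is more transparent and self-contained.
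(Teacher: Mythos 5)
The paper never proves this lemma---it is imported directly from Chan and Toh \cite[Lemma 6]{Ch-To10}---so there is no internal proof to compare against; judged on its own, your argument is correct, and it is the classical Atkin--Lehner-style coset computation that underlies the cited result. The key steps all check out: weight-$0$ invariance makes $\stroke{f}{M}$ depend only on the coset $\Gamma_0(N)M$, and your matrices $\beta_j$ induce the same M\"obius maps as those in \eqn{Updef} (whose upper summation limit $p$ there is a typo for $p-1$, which you implicitly corrected); the congruence $j'ae\equiv b+jde\pmod p$ is exactly what clears the denominator $p$, and $ade\equiv 1\pmod p$ does follow from $ade^2-bcN=e$ together with $p\mid N$ and $(p,e)=1$; the map $j\mapsto j'$ is an affine bijection mod $p$ because $ae$ and $de$ are units; and after right-multiplying by $W_e^{-1}=\frac{1}{e}\MAT{de}{-b}{-cN}{ae}$ every entry is indeed divisible by $e$, with lower-left entry $cm\bigl(de(p-1)+j'cN\bigr)$ divisible by $N=em$, so $\gamma_j\in\Gamma_0(N)$ (the sign caveat is unnecessary, since $\det\gamma_j=1$ exactly). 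Two minor points. First, the quantity $\frac{1}{p}\bigl(-j'(ae+jcN)+b+jde\bigr)$ is the $(1,2)$-entry of the intermediate product $\beta_jW_e\beta_{\sigma(j)}^{-1}$, not of $\gamma_j$ itself; your two-stage clearing---first the $p$ from $\beta_{\sigma(j)}^{-1}$, then the $e$ from $W_e^{-1}$---is the right bookkeeping, but the wording should say which matrix is meant. Second, it is worth stating explicitly that your proof uses only invariance of $f$ and never holomorphy: this is precisely the generality the paper requires, since it applies the lemma to the non-holomorphic function $M_pH_0(\tau)$ in the proof of Theorem \ref{thm:th2}, which is why the lemma is phrased as ``$f$ is on $\Gamma_0(N)$'' rather than ``$f$ is a modular function.''
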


Let
$$
\widetilde{H}=(H_0, H_1, H_2):=
\left(\frac{\eta(2\tau)^2}{\eta(\tau)^3}h_0(\tau), 
      \frac{\eta(\tau/2)^2}{2\eta(\tau)^3}h_1(\tau), 
      \frac{\eta(\tau)^3}{2\eta(\tau/2)^2\eta(\tau)^2}h_2(\tau)\right).
$$
\begin{lemma}
\label{lemh0}
$H_0(\tau)$ is on $\Gamma_0(4)$.
\end{lemma}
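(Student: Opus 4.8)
The plan is to verify the $\Gamma_0(4)$-invariance of $H_0$ directly on a set of generators, feeding the transformation laws \eqref{eq:htau1} and \eqref{eq:htau2} of $H=F-G$ into the definition $H_0=\dfrac{\eta(2\tau)^2}{\eta(\tau)^3}\,h_0$. Since $F$, and hence $H$, transforms with weight $\tfrac12$ while $\eta(2\tau)^2\eta(\tau)^{-3}$ has weight $-\tfrac12$, the product $H_0$ has weight $0$; thus only the multiplier (root-of-unity and branch) bookkeeping needs to be checked. Modulo $\pm I$ the group $\overline{\Gamma}_0(4)$ is free of rank $2$, generated by the parabolic elements $T=\begin{pmatrix}1&1\\0&1\end{pmatrix}$ and $V=\begin{pmatrix}1&0\\4&1\end{pmatrix}$ fixing the cusps $\infty$ and $0$; so it suffices to prove $\stroke{H_0}{T}=H_0$ and $\stroke{H_0}{V}=H_0$.

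The first is immediate. The top row of \eqref{eq:htau1} gives $h_0(\tau+1)=\zeta_{24}^{-1}h_0(\tau)$, while $\eta(\tau+1)=\zeta_{24}\eta(\tau)$ and $\eta(2\tau+2)=\zeta_{12}\eta(2\tau)$ give $\eta(2(\tau+1))^2\eta(\tau+1)^{-3}=\zeta_{24}\,\eta(2\tau)^2\eta(\tau)^{-3}$. The factors $\zeta_{24}$ and $\zeta_{24}^{-1}$ cancel, so $H_0(\tau+1)=H_0(\tau)$.

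For the second I would write $V=-ST^{-4}S$ and compose the transformation laws, using $V\tau=-1/(S\tau-4)$. Applying \eqref{eq:htau2}, then \eqref{eq:htau1} four times, then \eqref{eq:htau2} once more, the two weight-$\tfrac12$ automorphy factors produced by the two occurrences of $S$ multiply to the single factor $(4\tau+1)^{1/2}$, while the accompanying matrix is $M_S M_T^{-4} M_S$, where $M_T=\begin{pmatrix}\zeta_{24}^{-1}&0&0\\0&0&\zeta_3\\0&\zeta_3&0\end{pmatrix}$ and $M_S=\begin{pmatrix}0&1&0\\1&0&0\\0&0&-1\end{pmatrix}$ are the matrices of \eqref{eq:htau1} and \eqref{eq:htau2}. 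Since $M_T^{-4}=\mathrm{diag}(\zeta_6,\zeta_3^{-1},\zeta_3^{-1})$ is diagonal, conjugation by $M_S$ keeps it diagonal and swaps its first two entries, so $M_S M_T^{-4} M_S=\mathrm{diag}(\zeta_3^{-1},\zeta_6,\zeta_3^{-1})$ and the first component reads $h_0(V\tau)=(4\tau+1)^{1/2}\zeta_3^{-1}h_0(\tau)$, up to the branch sign discussed below. On the eta side, $\eta(V\tau)=\nu_\eta(V)(4\tau+1)^{1/2}\eta(\tau)$ and, writing $2V\tau=V_2(2\tau)$ with $V_2=\begin{pmatrix}1&0\\2&1\end{pmatrix}$, $\eta(2V\tau)=\nu_\eta(V_2)(4\tau+1)^{1/2}\eta(2\tau)$, so the eta-quotient picks up $(4\tau+1)^{-1/2}\nu_\eta(V_2)^2\nu_\eta(V)^{-3}$. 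The fractional powers of $4\tau+1$ cancel and $\stroke{H_0}{V}=H_0$ reduces to the single root-of-unity identity $\zeta_3^{-1}\nu_\eta(V_2)^2\nu_\eta(V)^{-3}=1$. Computing the eta-multipliers from the Dedekind-sum formula (here $s(1,2)=0$ and $s(1,4)=\tfrac18$) gives $\nu_\eta(V)=\zeta_6^{-1}$ and $\nu_\eta(V_2)=\zeta_{12}^{-1}$, whence $\nu_\eta(V_2)^2\nu_\eta(V)^{-3}=\zeta_3$ and the identity holds.

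The step I expect to be most delicate is the branch/metaplectic bookkeeping. Because $H$ has half-integral weight, the automorphy factor in \eqref{eq:htau2} is genuinely multivalued, and composing two applications of $S$ leaves a sign $\epsilon\in\{\pm1\}$ in front of $(4\tau+1)^{1/2}$ that must be matched against the specific $\{-i(c\tau+d)\}^{1/2}$ branch convention underlying the eta-multiplier formula. Fixing principal branches (all arguments lie in $\uhp$) and checking $\epsilon=+1$ is the crux; once the phases are reconciled, the computations above yield $\stroke{H_0}{T}=\stroke{H_0}{V}=H_0$, and since $T$ and $V$ generate $\overline{\Gamma}_0(4)$ we conclude that $H_0$ is on $\Gamma_0(4)$.
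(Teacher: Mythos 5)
Your proposal is correct, and its route genuinely differs from the paper's in two ways. The paper's device is to bundle eta-quotients into \emph{all three} components at once, setting $\widetilde H=(H_0,H_1,H_2)$, so that the translation and inversion laws become the weight-$0$ statements \eqref{eq:htau3} and \eqref{eq:htau4} with constant matrices: the half-integral automorphy factors cancel inside each single application, and the generator checks then chain branch-free identities (passing through $H_1$ and $H_2$, which is why the paper needs the $\eta(\tau+\tfrac12)$ evaluation \eqref{eq:eta2}). It then verifies invariance on Razar's generators $-I$, $\begin{pmatrix}1&1\\0&1\end{pmatrix}$, $\begin{pmatrix}1&-1\\4&-3\end{pmatrix}$, $\begin{pmatrix}3&-1\\4&-1\end{pmatrix}$ of $\Gamma_0(4)$. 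You instead keep $h_0$ bare, use the free generators $T$ and $V=\begin{pmatrix}1&0\\4&1\end{pmatrix}$ of $\Gamma_0(4)/\{\pm I\}$ (correct: conjugating $\Gamma(2)$ by $\begin{pmatrix}2&0\\0&1\end{pmatrix}$ carries its standard free generators $\begin{pmatrix}1&2\\0&1\end{pmatrix}$, $\begin{pmatrix}1&0\\2&1\end{pmatrix}$ to $T$, $V$), and compose the raw weight-$\tfrac12$ laws \eqref{eq:htau1}, \eqref{eq:htau2} along $V=-ST^{-4}S$, cancelling against eta multipliers computed from Dedekind sums. Your arithmetic checks out: $M_SM_T^{-4}M_S=\mathrm{diag}(\zeta_3^{-1},\zeta_6,\zeta_3^{-1})$, so your $V$-computation never mixes $h_0$ with $h_1,h_2$ (a simplification the paper's chains do not enjoy), and $\nu_\eta(V_2)^2\nu_\eta(V)^{-3}=\zeta_3$ indeed cancels the $\zeta_3^{-1}$. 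What the paper's formulation buys is the complete absence of branch issues; what yours buys is a two-generator check that stays entirely in the first component.

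The one step you leave open --- the sign $\epsilon\in\{\pm1\}$ in $\sqrt{-iw}\,\sqrt{-i\tau}=\epsilon\,\sqrt{4\tau+1}$, where $w=-1/\tau-4$ --- does close, and more easily than you suggest. For $\tau\in\uhp$ both $-i\tau$ and $-iw$ lie in the open right half-plane, so their principal square roots have arguments in $(-\pi/4,\pi/4)$ and their product has positive real part; since $(-i\tau)(-iw)=-\tau w=4\tau+1$ lies in $\uhp$, its principal square root is the unique square root with positive real part, so the product equals it and $\epsilon=+1$. With that supplied, $\stroke{H_0}{T}=\stroke{H_0}{V}=H_0$ holds on a generating set and the lemma follows; your proof is complete.
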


\begin{proof} It is well-known that
\begin{align}
\mylabel{eq:eta1}
\eta(\tau+1)=\zeta_{24}\eta(\tau),
\end{align}
and it is easy to calculate that
\begin{align}
\mylabel{eq:eta2}
\eta(\tau+1/2)=\zeta_{48}\frac{\eta(2\tau)^3}{\eta(\tau)^2\eta(4\tau)^2}.
\end{align}
From \omyeqn{htau1}, \omyeqn{eta1} and \omyeqn{eta2}, we have
\begin{align}
\mylabel{eq:htau3}
\widetilde{H}(\tau+1)=
\left(\begin{matrix}
1 &0 &0\\
0 &0 &i\\
0 &i &0
\end{matrix}\right)
\widetilde{H}(\tau).
\end{align}
It is also well-known that
\begin{align}
\mylabel{eq:eta3}
\eta(-1/\tau)=\sqrt{-i\tau}\eta(\tau).
\end{align}
From \omyeqn{htau2} and \omyeqn{eta3}, we have
\begin{align}
\mylabel{eq:htau4}
\widetilde{H}(-1/\tau)=
\left(\begin{matrix}
0 &1 &0\\
1 &0 &0\\
0 &0 &-1
\end{matrix}\right)
\widetilde{H}(\tau).
\end{align}
By \cite[Proposition 4]{Ra77},
\begin{align*}
\left(\begin{matrix}
-1 &0\\
0 &-1
\end{matrix}\right),
\left(\begin{matrix}
1 &1\\
0 &1
\end{matrix}\right),
\left(\begin{matrix}
1 &-1\\
4 &-3
\end{matrix}\right)
\text{ and }
\left(\begin{matrix}
3 &-1\\
4 &-1
\end{matrix}\right),
\end{align*}
generate $\Gamma_0(4)$. From \omyeqn{htau3} and \omyeqn{htau4} we can compute that
\begin{align*}
&H_0(\tau+1)=H_0(\tau),\\
&H_0\left(\frac{\tau-1}{4\tau-3}\right)=H_1\left(-\frac{4\tau-3}{\tau-1}\right)=H_1\left({-\frac{1}{\tau-1}}\right)=H_0(\tau-1)=H_0(\tau),\\
&H_0\left(\frac{3\tau-1}{4\tau-1}\right)=H_1\left(-\frac{4\tau-1}{3\tau-1}\right)=-iH_2\left({-\frac{\tau}{3\tau-1}}\right)=iH_2\left(-\frac{1}{\tau}+3\right)=H_0(\tau),
\end{align*}
which implies that $H_0(\tau)$ is on $\Gamma_0(4)$.
\end{proof}

From Lemma \ref{lemh0}, we can prove the following theorem.

\begin{theorem}
\omylabel{thm:f0Up}
For each prime $p\geq 5$,
\begin{align*}
\frac{\eta(2p\tau)^2}{\eta(p\tau)^3}\left(\stroke{f_0}{U_p^*}-\chi_6(p)f_0(p\tau)\right),
\end{align*}
is a weakly holomorphic modular function on $\Gamma_0(4p)$.
\end{theorem}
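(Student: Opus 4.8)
The plan is to replace the mock theta quantity $f_0$ by its modular completion and then transport modularity from level $4$ up to level $4p$. Set $\Psi(\tau):=\eta(2\tau)^2/\eta(\tau)^3$, so that the first component $H_0=\Psi\,h_0$ of $\widetilde H$ is a (non-holomorphic) modular function on $\Gamma_0(4)$ by Lemma \ref{lemh0}. Two observations organise everything. First, comparing first components in \omyeqn{fuf},
\[
\stroke{f_0}{U_p^*}-\chi_6(p)\,f_0(p\tau)=\stroke{h_0}{U_p^*}-\chi_6(p)\,h_0(p\tau),
\]
so the function in \thm{f0Up} equals $\Psi(p\tau)\bigl(\stroke{h_0}{U_p^*}-\chi_6(p)\,h_0(p\tau)\bigr)$. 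Second, since $\eta(\tau+1)=\zeta_{24}\eta(\tau)$ forces $\Psi(\tau+1)=\zeta_{24}\Psi(\tau)$, the quotient $\Psi$ has period $24$; hence the shift $\tau\mapsto\tau+24j$ in the definition of $U_p^*$ fixes $\Psi$, yielding the pullout $\stroke{(\Psi(p\,\cdot)\,g)}{U_p^*}=\Psi\,\stroke{g}{U_p^*}$, the $U_p^*$-analogue of \omyeqn{u71}.

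Holomorphy on $\mathbb{H}$ is then immediate and needs no completion: $f_0$ is holomorphic on $\mathbb{H}$, the finite average $\stroke{f_0}{U_p^*}$ and the dilate $f_0(p\tau)$ are holomorphic, and $\eta(2p\tau)^2/\eta(p\tau)^3$ is holomorphic and non-vanishing on $\mathbb{H}$. The role of the completion is only to supply modular covariance: the right-hand side above is assembled from $h_0$, a component of the non-holomorphic form $H=F-G$, yet equals the holomorphic left-hand side. In other words, Lemma \ref{lemUpG} says precisely that the operator $g\mapsto\stroke{g}{U_p^*}-\chi_6(p)\,g(p\tau)$ annihilates the non-holomorphic part $G$, so that $\stroke{h_0}{U_p^*}-\chi_6(p)\,h_0(p\tau)$ is a holomorphic function that nonetheless transforms modularly.

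For $\Gamma_0(4p)$-invariance I would treat the two terms of $\Psi(p\tau)\bigl(\stroke{h_0}{U_p^*}-\chi_6(p)\,h_0(p\tau)\bigr)$ separately. The dilate term is clean: $\chi_6(p)\,\Psi(p\tau)h_0(p\tau)=\chi_6(p)\,H_0(p\tau)$, and since $H_0$ is on $\Gamma_0(4)$ the map $\tau\mapsto p\tau$ renders $H_0(p\tau)$ modular on $\Gamma_0(4p)$ (for $\gamma\in\Gamma_0(4p)$ one has $p\gamma\tau=\gamma'(p\tau)$ with $\gamma'\in\Gamma_0(4)$). For the averaged term $\Psi(p\tau)\stroke{h_0}{U_p^*}$ I would pass through the $24$-scaling built into $U_p^*$, writing $\stroke{g}{U_p^*}=\stroke{g^*}{U_p}(\tau/24)$ with $g^*(\tau)=g(24\tau)$. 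In the scaled variable $H_0^*(\tau):=H_0(24\tau)$ is a genuine weight-$0$ modular function on $\Gamma_0(96)$, and on $\Gamma_0(96p^2)$ Lemma \ref{lemgp} (valid for non-holomorphic modular functions) makes $\stroke{H_0^*}{U_p}$ modular on $\Gamma_0(96p)$; the task is then to express $\Psi(p\tau)\stroke{h_0}{U_p^*}$ in terms of $\stroke{H_0^*}{U_p}$ and a dilated eta-quotient, using the period-$24$ pullout to absorb the multiplier of $\Psi$, and to conjugate back by $\tau\mapsto\tau/24$, which should deposit the whole expression on $\Gamma_0(4p)$. Weak holomorphy finally also requires meromorphy at the cusps: because $f_0$ and the eta-quotient grow at worst like a pole at every cusp and $U_p^*$ preserves bounded order, each cusp expansion has only finitely many negative terms, which together with the two preceding points gives the theorem.

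The main obstacle is the reconciliation in the third paragraph. The eta-quotient $\Psi$ has weight $-\tfrac12$ and a multiplier of period $24$ rather than $1$, while $h_0$ carries the compensating weight $+\tfrac12$ and the multiplier coming from \omyeqn{htau3}--\omyeqn{htau4}; the delicate point is that the $24$-shift in $U_p^*$, the scaling matrix $\mathrm{diag}(24,1)$, the $p$-dilation, and the sign $\chi_6(p)$ must all conspire so that these half-integral multipliers and fractional $q$-powers cancel, leaving a bona fide weight-$0$ modular function, and so that the level tracked through Lemma \ref{lemgp} collapses to exactly $\Gamma_0(4p)$ rather than a proper multiple of it. As a consistency check, for $p=7$ one has $\chi_6(7)=1$, and the resulting identity should reproduce Theorem \thm{af7thm}.
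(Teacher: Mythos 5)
Your first two observations, your treatment of the dilate term $\chi_6(p)H_0(p\tau)$, and the holomorphy discussion all agree with what is needed (and with the paper), but the heart of the theorem --- $\Gamma_0(4p)$-invariance of the averaged term $\Psi(p\tau)\,\stroke{h_0}{U_p^*}$, where as in your notation $\Psi(\tau)=\eta(2\tau)^2/\eta(\tau)^3$ --- is exactly where your argument has a genuine gap, and the route sketched in your third paragraph does not close it. Concretely, unwinding the $24$-scaling gives
\[
\stroke{H_0^*}{U_p}(\tau/24)\;=\;\frac1p\sum_{j=0}^{p-1}\Psi\!\left(\tfrac{\tau+24j}{p}\right)h_0\!\left(\tfrac{\tau+24j}{p}\right)\;=\;\stroke{H_0}{U_p^*},
\]
and this cannot be rewritten as an eta-quotient times $\stroke{h_0}{U_p^*}$: the factor $\Psi\bigl(\tfrac{\tau+24j}{p}\bigr)$ genuinely depends on $j$, because the period $24$ of $\Psi$ does not absorb shifts by $24j/p$; so no $j$-independent prefactor can be pulled out of this sum. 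Moreover, even for a function known to be modular on $\Gamma_0(96p)$ in the scaled variable, conjugating back by $\tau\mapsto\tau/24$ only yields invariance under $\left\{\MAT{a}{b}{c}{d}\in\Gamma(1)\,:\,24\mid b,\ 4p\mid c\right\}$, a proper subgroup of $\Gamma_0(4p)$. Upgrading to the full group is precisely the multiplier cancellation you defer to the end; the ``main obstacle'' you name is therefore not a loose end but the entire content of the proof.

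The fix --- and this is the paper's key step --- is to choose the eta-prefactor so that it is constant along the $U_p^*$-average \emph{before} the operator is applied: multiply $h_0$ by $\Psi(p^2\tau)=\frac{\eta(2p^2\tau)^2}{\eta(p^2\tau)^3}$ rather than by $\Psi(\tau)$. Then
\[
\Psi(p^2\tau)\,h_0(\tau)\;=\;\frac{\eta(2p^2\tau)^2\eta(\tau)^3}{\eta(p^2\tau)^3\eta(2\tau)^2}\,H_0(\tau),
\]
and the eta-quotient on the right is a genuine weight-$0$ modular function on $\Gamma_0(2p^2)$ by Newman's criterion (Theorem \thm{etamodthm}), so the product is a (non-holomorphic) modular function on $\Gamma_0(4p^2)$, in particular of period $1$; hence $U_p^*$ and $U_p$ agree on it, Lemma \ref{lemgp} applies directly and places its $U_p$-image on $\Gamma_0(4p)$, and since $\Psi(p^2\,\cdot)$ evaluated at $\tfrac{\tau+24j}{p}$ equals $\Psi(p\tau+24pj)=\Psi(p\tau)$ independently of $j$, the pullout identifies that image with $\Psi(p\tau)\,\stroke{h_0}{U_p^*}$ exactly. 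No conjugation by the scaling matrix is needed, and no residual multiplier remains to be cancelled. Combined with your (correct) use of \omyeqn{fuf}, your argument for $H_0(p\tau)$, and a genuine treatment of the cusp conditions (for which the paper appeals to an argument as in \cite[Section 5]{Ga19a}, rather than the assertion that $U_p^*$ ``preserves bounded order''), this completes the proof.
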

\begin{proof}
From Lemma \ref{lemh0} 
$$
H_0(\tau)=\frac{\eta(2\tau)^2}{\eta(\tau)^3}h_0(\tau)
$$ 
is 
on $\Gamma_0(4)$. 
Also by Theorem \mythm{etamodthm} the product
$$
\frac{\eta(2p^2\tau)^2\eta(\tau)^3}{\eta(p^2\tau)^3\eta(2\tau)^2}
$$ 
is a modular function on $\Gamma_0(2p^2)$. 
This implies 
$$
\frac{\eta(2p^2\tau)^2}{\eta(p^2\tau)^3}h_0(\tau)
$$ 
is on $\Gamma_0(4p^2)$ 
and by Lemma \ref{lemgp},
\begin{align}
\mylabel{eq:uph}
\stroke{\frac{\eta(2p^2\tau)^2}{\eta(p^2\tau)^3}h_0(\tau)}{U_p}
\end{align}
is on $\Gamma_0(4p)$. Let
\begin{align*}
A=\left(\begin{matrix}
a &b\\
c &d
\end{matrix}\right)\in \Gamma_0(4p),
\end{align*}
then
\begin{align*}
A^*=\left(\begin{matrix}
a &bp\\
c/p &d
\end{matrix}\right)\in \Gamma_0(4),
\end{align*}
and
\begin{align}
\mylabel{eq:hptau}
H_0(pA\tau)=H_0(A^*(p\tau))=H_0(p\tau).
\end{align}
By \omyeqn{fuf}, \omyeqn{uph} and \omyeqn{hptau}, 
%%and noting that $q^{1/24}f_0(q)$ is holomorphic and has a q-expansion,
the function  
\begin{align*}
\frac{\eta(2p\tau)^2}{\eta(p\tau)^3}\left(\stroke{f_0}{U_p^*}-\chi_6(p)f_0(p\tau)\right)=&\frac{\eta(2p\tau)^2}{\eta(p\tau)^3}\left(\stroke{h_0}{U_p^*}-\chi_6(p)h_0(p\tau)\right)\\
=&\stroke{\frac{\eta(2p^2\tau)^2}{\eta(p^2\tau)^3}h_0(\tau)}{U_p}-\chi_6(p)H_0(p\tau)
\end{align*}
is on $\Gamma_0(4p)$, and is holomorphic on $\mathbb{H}$. 
By using an argument similar to that of \cite[Section 5]{Ga19a}
we can show that the function satisfies 
condition (iii) (Section \subsect{bthy})
and is thus a weakly holomorphic function on $\Gamma_0(4p)$.
\end{proof}

For example, letting $p=5, 7$ we have
\begin{align}
\mylabel{eq:f5q}
\frac{J_{10}^2}{J_5^3}\left(\stroke{qf(q)}{U_5}+f(q^5)\right)&=\frac{J_2^4J_{10}^4}{J_1J_4^3J_5^3J_{20}}-4q\frac{J_1^2J_4^3J_{10}J_{20}}{J_2^5J_5^2},\\
\mylabel{eq:f7q}
\frac{J_{14}^2}{J_7^3}\left(\stroke{q^2f(q)}{U_7}-f(q^7)\right)&=-\frac{J_1^3J_7^3}{J_2^5J_{14}}-6q^2\frac{J_1^4J_{14}^6}{J_2^6J_7^4}.
\end{align}
Note that \omyeqn{f5q} can also be found in \cite[Eq. (3.1)]{Ch-Ch-Ga20}.

For each prime $p\geq 5$,
\begin{align*}
W(p):=\left(\begin{matrix}
p^2-1 &-1\\
4p^2 &-4
\end{matrix}\right)
\end{align*}
is an Atkin-Lehner involution on $\Gamma_0(4p^2)$ with $a=(p^2-1)/4$, $b=-1$, $c=1$, $d=-1$ and $e=4$.

\begin{theorem}
\label{thm:th2}
For each prime $p\geq 5$
\begin{align}
\mylabel{eq:fwr}
\stroke{\frac{\eta(2p\tau)^2}{\eta(p\tau)^3}\left(\stroke{f_0(\tau)}{U_p^*}-\chi_6(p)f_0(p\tau)\right)}{W(p)}
=\frac{\eta(2p\tau)^2}{2\eta(4p\tau)^3}\left(\stroke{f_1(4\tau)}{U_p^*}-\chi_6(p)f_1(4p\tau)\right).
\end{align}
\end{theorem}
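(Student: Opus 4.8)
The plan is to move the whole identity from the holomorphic mock pieces $f_0,f_1$ onto the completed components $h_0,h_1$, and then to read it off from the vector-valued transformation law of $\widetilde H$ under an explicit factorization of $W(p)$. First, since $F=H+G$ and Lemma~\ref{lemUpG} gives $\stroke{G}{U_p^*}=\chi_6(p)G(p\tau)$, the relation \eqref{eq:fuf} holds component-wise, so $\stroke{f_i}{U_p^*}-\chi_6(p)f_i(p\tau)=\stroke{h_i}{U_p^*}-\chi_6(p)h_i(p\tau)$ for each $i$. Hence it suffices to prove the asserted equality with $f_0,f_1$ replaced by $h_0,h_1$. Next, exactly as in the proof of Theorem~\ref{thm:f0Up}, I would rewrite the left-hand function as $\stroke{\left(\stroke{\Psi_0}{U_p}-\chi_6(p)H_0(p\tau)\right)}{W(p)}$, where $\Psi_0(\tau):=\tfrac{\eta(2p^2\tau)^2}{\eta(p^2\tau)^3}h_0(\tau)$ is on $\Gamma_0(4p^2)$; and, using $H_1(4p\tau)=\tfrac{\eta(2p\tau)^2}{2\eta(4p\tau)^3}h_1(4p\tau)$ together with the same $U_p^*$-to-$U_p$ manipulation, rewrite the right-hand function in the matching shape $\stroke{\Psi_1}{U_p}-\chi_6(p)H_1(4p\tau)$, with $\Psi_1$ a level-$4p^2$ eta-weighting of $h_1(4\tau)$.

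Second, I would push $W(p)$ through the $U_p$. Since $4\parallel 4p^2$, $p\mid 4p^2$, and $(p,4)=1$ for $p\ge 5$, Lemma~\ref{lemuw} applies and gives
\[
\stroke{\left(\stroke{\Psi_0}{U_p}\right)}{W(p)}-\chi_6(p)\,\stroke{H_0(p\tau)}{W(p)}
=\stroke{\left(\stroke{\Psi_0}{W(p)}\right)}{U_p}-\chi_6(p)\,H_0\!\left(pW(p)\tau\right).
\]
Thus everything reduces to computing the two Atkin–Lehner images $\stroke{\Psi_0}{W(p)}$ and $H_0(pW(p)\tau)$ and matching them against $\Psi_1$ and $H_1(4p\tau)$.

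The heart of the argument is these two computations, and the key device is that $W(p)$ factors through $\SLZ$ after a scaling:
\[
W(p)=M\begin{pmatrix}4&0\\0&1\end{pmatrix},\qquad
M=\begin{pmatrix}(p^2-1)/4&-1\\p^2&-4\end{pmatrix}\in\SLZ,
\]
so that $W(p)\tau=M(4\tau)$; similarly $pW(p)\tau=M''(4p\tau)$ with $M''=\begin{pmatrix}(p^2-1)/4&-p\\p&-4\end{pmatrix}\in\SLZ$. Because $\widetilde H$ is weight zero, \eqref{eq:htau3}–\eqref{eq:htau4} yield $\widetilde H(M\sigma)=\rho(M)\widetilde H(\sigma)$ with \emph{no} automorphy factor, where $\rho$ is the monomial representation generated by the two matrices in \eqref{eq:htau3} and \eqref{eq:htau4}. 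Crucially $M\equiv M''\equiv\begin{pmatrix}0&1\\1&0\end{pmatrix}\pmod 2$ independently of $p$, so once one checks that $\rho$ factors through a small level, $\rho(M)$ and $\rho(M'')$ are (up to roots of unity) the swap matrix from \eqref{eq:htau4}; applied to $\widetilde H(4\tau)$ and $\widetilde H(4p\tau)$ this converts the index-$0$ component $H_0$ into the index-$1$ component $H_1$, which is precisely what the right side requires. For the eta-prefactors inside $\Psi_0$ I would use Lemma~\ref{lemeta} with $e=4$, $N=4p^2$, and $t=p^2,2p^2$ (note $(4,p^2)=1$, $(4,2p^2)=2$), which turns $\eta(p^2W(p)\tau)$ and $\eta(2p^2W(p)\tau)$ into $\eta(4p^2\tau)$ and $\eta(2p^2\tau)$ times explicit square-root factors and eta-multipliers; these are exactly the shifts that produce the $\eta(4p\tau)^{3}$ denominator of $\Psi_1$ after $U_p$.

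The last step is to assemble these pieces and verify that all the half-integral-weight automorphy factors $(4p^2\tau-4)^{\pm1/2}$ coming from Lemma~\ref{lemeta}, the $\tfrac{\eta^3}{\eta(2\cdot)^2}$ factors converting between $h_0,h_1$ and $H_0,H_1$, and the roots of unity in $\rho(M)$, $\rho(M'')$ and the eta-multipliers $\nu_\eta$ all cancel to leave exactly $\Psi_1$ and $\chi_6(p)H_1(4p\tau)$. I expect this multiplier and automorphy-factor bookkeeping, carried out uniformly in $p$, to be the main obstacle: one must confirm that $\rho(M)$ really is $p$-independent at the relevant level and that the various $\sqrt{\phantom{x}}$-factors and twenty-fourth/forty-eighth roots of unity combine to the clean overall constant $1$, so that the two explicitly stated $\chi_6(p)$-terms on each side line up and the $U_p$-terms agree identically.
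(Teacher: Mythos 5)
Your proposal is correct and is essentially the paper's own proof: both arguments reduce to the completed functions via \eqref{eq:fuf}, commute $W(p)$ past the $U_p^*$ operator using Lemma \ref{lemuw}, evaluate $\stroke{H_0(\tau)}{W(p)}=H_1(4\tau)$ and $H_0(pW(p)\tau)=H_1(4p\tau)$ by factoring $W(p)\tau=M(4\tau)$ (resp.\ $pW(p)\tau=M''(4p\tau)$) and applying the Zwegers transformations \eqref{eq:htau3}--\eqref{eq:htau4}, and compute the eta-quotient image with Lemma \ref{lemeta}. One caution on your shortcut: the representation generated by \eqref{eq:htau3}--\eqref{eq:htau4} does not factor through level $2$ (the matrix in \eqref{eq:htau3} has order $4$), so the congruence $M\equiv M''\equiv\left(\begin{smallmatrix}0&1\\1&0\end{smallmatrix}\right)\pmod 2$ alone does not pin down $\rho(M)$; the phase-free outcome $H_0\mapsto H_1$ must be verified by the explicit generator computation, which is exactly what the paper does, using that $H_0$ is $1$-periodic, $H_1$ is $4$-periodic, and $(p^2-1)/4\in\mathbb{Z}$.
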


\begin{proof}
We know                               
$$
M_p=\frac{\eta(2p^2\tau)^2\eta(\tau)^3}{\eta(p^2\tau)^3\eta(2\tau)^2}
$$ 
is a modular function on $\Gamma_0(2p^2)$. 
From Lemma \ref{lemh0}, $M_pH_0(\tau)$ is 
on $\Gamma_0(4p^2)$. Thus by Lemma \ref{lemuw}
\begin{align}
\mylabel{eq:we0}
\stroke{\frac{\eta(2p\tau)^2}{\eta(p\tau)^3}(\stroke{h_0(\tau)}{U_p^*})}{W(p)}=\stroke{\left(\stroke{M_pH_0(\tau)}{U_p^*}\right)}{W(p)}=\stroke{\left(\stroke{M_pH_0(\tau)}{W(p)}\right)}{U_p^*}.
\end{align}
Using \omyeqn{htau3} and \omyeqn{htau4}, and letting $a=(p^2-1)/4$ and 
$\tau_1=4\tau$ we have
\begin{align}
\mylabel{eq:we1}
\stroke{H_0(\tau)}{W(p)}=&H_0\left(\frac{a\tau_1-1}{(4a+1)\tau_1-4}\right)=H_1\left(-\frac{(4a+1)\tau_1-4}{a\tau_1-1}\right)\\
\nonumber
=&H_1\left(-\frac{\tau_1}{a\tau_1-1}\right)=H_0\left(a-\frac{1}{\tau_1}\right)=H_1(4\tau).
\end{align}
Using Lemma \ref{lemeta} we have
\begin{align}
\mylabel{eq:we2}
\stroke{M_p}{W(p)}=\frac{\eta(2p^2\tau)^2\eta(4\tau)^3}{\eta(4p^2\tau)^3\eta(2\tau)^2}
\end{align}
\omyeqn{we0}, \omyeqn{we1} and \omyeqn{we2} gives
\begin{align}
\mylabel{eq:we3}
\stroke{\frac{\eta(2p\tau)^2}{\eta(p\tau)^3}(\stroke{h_0(\tau)}{U_p^*})}{W(p)}=\frac{\eta(2p\tau)^2}{2\eta(4p\tau)^3}(\stroke{h_1(4\tau)}{U_p^*}).
\end{align}
Also, it is easy to calculate that
\begin{align}
\mylabel{eq:we4}
\stroke{\frac{\eta(2p\tau)^2}{\eta(p\tau)^3}h_0(p\tau)}{W(p)}=\stroke{H_0(p\tau)}{W(p)}=H_1(4p\tau)=\frac{\eta(2p\tau)^2}{2\eta(4p\tau)^3}h_1(4\tau).
\end{align}
\omyeqn{we3} and \omyeqn{we4} give
\begin{align}
\mylabel{eq:fuf1}
\stroke{\frac{\eta(2p\tau)^2}{\eta(p\tau)^3}\left(\stroke{h_0(\tau)}{U_p^*}-\chi_6(p)h_0(p\tau)\right)}{W(p)}=\frac{\eta(2p\tau)^2}{2\eta(4p\tau)^3}\left(\stroke{h_1(4\tau)}{U_p^*}-\chi_6(p)h_1(4p\tau)\right).
\end{align}
\omyeqn{fuf} and \omyeqn{fuf1} complete the proof.
\end{proof}

\subsection{The congruences for $\omega(q)$}

Theorem \omythm{th2} implies that if there are congruences for the 
coefficients of $f(q)$, there will be congruences for the coefficients of 
$\omega(q)$. For example, letting $p=5$ in \omyeqn{fwr}, we have
\begin{align}
\mylabel{eq:fwwe}
\stroke{\frac{J_{10}^2}{J_5^3}\left(\stroke{qf(q)}{U_5}+f(q^5)\right)}{W(5)}=\frac{J_{10}^2}{J_{20}^3}\left(\stroke{q^{-7}\omega(q^2)}{U_5}+q^5\omega(q^{10})\right).
\end{align}
Applying $W(5)$ to both sides of \omyeqn{f5q} and using \omyeqn{fwwe} and 
Lemma \ref{lemeta}, we obtain the generating function of 
$\stroke{q^{-2}\omega(q^2)}{U_5}+q^6\omega(q^{10})$ after 
multiplying both sides by $\frac{qJ_{20}^3}{J_{10}^2}$:
\begin{align}
\mylabel{eq:genw}
\sum_{n=0}^{\infty}(a_\omega(5n+1)+a_\omega((n-3)/5))q^{2n}=\frac{J_2^4J_{10}^2}{J_1^3J_4J_5}+\frac{J_1^3J_4^2J_5J_{20}}{J_2^5J_{10}}.
\end{align}

\begin{theorem}
For all $\alpha\ge3$ and all $n\ge 0$ we have
\begin{align*}
a_\omega(5^{\alpha}n + \delta_\alpha)
+ a_\omega(5^{\alpha-2}n + \delta_{\alpha-2})
\equiv 0 \pmod{5^{ \FL{\tfrac{1}{2}\alpha }}},
\end{align*}
  where $\delta_\alpha$ satisfies $0 < \delta_\alpha < 5^\alpha$ and
$3\delta_\alpha+2\equiv0\pmod{5^\alpha}$.
\end{theorem}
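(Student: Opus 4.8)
The plan is to deduce this statement from the mod-$5$ rank-parity congruence, Theorem \thm{rankpar5}, by transporting it across the Atkin--Lehner involution $W(5)$. I specialize everything to $p=5$; since $5\equiv-1\pmod 6$ we have $\chi_6(5)=-1$, which is exactly what produces the plus sign in the asserted congruence. Introduce the two weakly holomorphic modular functions on $\Gamma_0(20)$
$$
\Phi_f:=\frac{\eta(10\tau)^2}{\eta(5\tau)^3}\Bigl(\stroke{f_0}{U_5^*}+f_0(5\tau)\Bigr),\qquad
\Phi_\omega:=\frac{\eta(10\tau)^2}{2\eta(20\tau)^3}\Bigl(\stroke{f_1(4\tau)}{U_5^*}+f_1(20\tau)\Bigr),
$$
whose weak holomorphy is Theorem \thm{f0Up} (and its $W(5)$-conjugate). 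Theorem \thm{th2} with $p=5$ is precisely the identity $\stroke{\Phi_f}{W(5)}=\Phi_\omega$, while \omyeqn{genw} records the $q^2$-expansion of $\Phi_\omega$ as the generating function of $a_\omega(5n+1)+a_\omega((n-3)/5)$, which is the base case $\alpha=1$ of the family.

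The mod-$5$ congruences for $a_f$ are produced by iterating $U_5$ together with its eta-twisted variants, exactly as the operators $U_A,U_B$ of Section \sect{rankparity5} produce the mod-$7$ chain $L_0,L_1,\dots$: one builds a chain of modular functions whose successive coefficients encode $a_f(5^\alpha n+\delta_\alpha)+a_f(5^{\alpha-2}n+\delta_{\alpha-2})$ and which provably lie in $5^{\FL{\alpha/2}}$ times a fixed $\mathbb{Z}$-lattice of eta-quotients. Here $U_5^*$ selects, from $f_0=q^{-1/24}f(q)$, exactly the progression $24n\equiv1\pmod{5^\alpha}$, the index condition of Theorem \thm{rankpar5}; the parallel selection from $f_1=2q^{1/3}\omega(q^{1/2})$ picks out $3n+2\equiv0\pmod{5^\alpha}$, which is the condition $3\delta_\alpha+2\equiv0\pmod{5^\alpha}$ appearing in the present statement.

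I would then transport the entire chain through $W(5)$. Since $5\mid 100$, $4\parallel 100$ and $(5,4)=1$, Lemma \ref{lemuw} gives $\stroke{(\stroke{g}{U_5})}{W(5)}=\stroke{(\stroke{g}{W(5)})}{U_5}$, so conjugation by $W(5)$ commutes with each $U_5$-step and carries the $f$-iterates to the corresponding $\omega$-iterates, starting from $\stroke{\Phi_f}{W(5)}=\Phi_\omega$. By Lemma \ref{lemeta}, $W(5)$ sends each eta-quotient building block to another eta-quotient of the $\omega$-lattice, the half-integral automorphy factors cancelling inside the weight-zero modular function, and the constants it introduces are $5$-adic units. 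Consequently the $5$-adic valuation bounds established on the $f$-side are preserved verbatim, the normalizing leading coefficients remain units mod $5$, and reading off coefficients yields $a_\omega(5^\alpha n+\delta_\alpha)+a_\omega(5^{\alpha-2}n+\delta_{\alpha-2})\equiv0\pmod{5^{\FL{\alpha/2}}}$ for $\alpha\ge3$.

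The main obstacle is this last point: verifying that conjugation by $W(5)$ genuinely preserves the $5$-adic integrality and valuation structure of the eta-quotient lattice --- that Lemma \ref{lemeta} introduces no spurious powers of $5$ and leaves the leading coefficients $5$-adic units --- together with the bookkeeping that matches the two arithmetic progressions ($24n\equiv1$ versus $3n+2\equiv0$) through $U_5^*$ and correctly handles the negative-index term and the boundary $\alpha\ge3$. Once these checks are in place the induction is formally identical to the mod-$5$ argument for $a_f$, and the result follows.
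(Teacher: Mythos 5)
Your proposal follows essentially the same route as the paper's proof: the paper defines the $\omega$-side chain $K_0=P_a$, $K_{2\alpha+1}=\stroke{aK_{2\alpha}}{U_5}$, $K_{2\alpha+2}=\stroke{bK_{2\alpha+1}}{U_5}$, proves $K_\alpha=\stroke{L_\alpha}{W(5)}$ by induction using the commutativity of $U_5$ with $W(5)$ (Lemma \ref{lemuw}), with \omyeqn{genw} and Theorem \ref{thm:th2} supplying the base case, and then inherits the $5$-adic bounds from $L_{2\alpha}\in 5^\alpha X_A$ of \cite{Ch-Ch-Ga20} because Lemma \ref{lemeta} gives $\stroke{t}{W(5)}=t_\omega$, $\stroke{P_A}{W(5)}=P_a$, $\stroke{P_B}{W(5)}=P_b$. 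The ``main obstacle'' you flag is in fact immediate: the stroke operator is $\mathbb{C}$-linear, so it carries an integer combination $\sum_k r(k)\,5^{e_k}P_A t^k$ to the identical combination in the generators $P_a t_\omega^k$, leaving every coefficient, and hence every $5$-adic valuation, untouched.
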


\begin{proof}
We define    
$$
a:=\frac{J_{50}^2J_4^4}{q^{12}J_{100}^4J_2^2}, \quad b:=\frac{q^4J_{100}}{J_4},
$$
and
\begin{align*}
P_a:=q\left(\frac{J_{20}J_{10}^2J_2^6}{J_5J_4^5J_1^3}+\frac{J_{20}^2J_5J_1^3}{J_{10}J_4^2J_2^3}\right),\\
P_b:=\frac{1}{q}\left(\frac{J_{10}^6J_4J_2^2}{J_{20}^5J_5^3J_1}-\frac{J_5^3J_4^2J_1}{J_{20}^2J_{10}^3J_2}\right).
\end{align*}
Let $K_0:=P_a$ and
$$
K_{2\alpha+1}=\stroke{aK_{2\alpha}}{U_5}, \quad K_{2\alpha+2}=\stroke{bK_{2\alpha+1}}{U_5}.
$$
From \omyeqn{genw} and a simple calculation which is similar to \omyeqn{L2a} 

and \omyeqn{L2a-1} in \cite{Ch-Ch-Ga20}, we have
\begin{align*}
K_{2\alpha}&=\frac{qJ_{20}J_2^2}{J_4^4}\sum_{n=0}^{\infty}a(5^{2\alpha}n+\gamma_{2\alpha})q^{2n},\\
K_{2\alpha+1}&=\frac{J_{10}^2J_4}{qJ_{20}^4}\sum_{n=0}^{\infty}a(5^{2\alpha+1}n+\gamma_{2\alpha+1})q^{2n},
\end{align*}
where $a(n):=a_\omega(5n+1)+a_\omega((n-3)/5)$, $\gamma_{2\alpha}=\frac{1}{3}(5^{2\alpha}-1)$ and $\gamma_{2\alpha+1}=\frac{1}{3}(2\cdot 5^{2\alpha+1}-1)$. Let
$$
t_\omega:=\frac{J_{10}^4J_4^2}{J_{20}^2J_2^4}.
$$
Using Lemma \ref{lemeta}, it is easy to see that
$$
\stroke{A}{W(5)}=a, \quad \stroke{B}{W(5)}=b,
$$
and
$$
\stroke{P_A}{W(5)}=P_a, \quad \stroke{P_B}{W(5)}=P_b, \quad \stroke{t}{W(5)}=t_\omega.
$$
We will prove that for each $\alpha\ge 0$
\begin{align}
\mylabel{eq:lkw}
K_{\alpha}=\stroke{L_{\alpha}}{W(5)}.
\end{align}
First, $K_0=P_a=\stroke{P_A}{W(5)}=\stroke{L_0}{W(5)}$, and then assume that \omyeqn{lkw} holds for $2\alpha$, by Lemma \ref{lemuw}
$$
K_{2\alpha+1}=\stroke{K_{2\alpha}}{U_5}=\stroke{\stroke{L_{\alpha}}{W(5)}}{U_5}=\stroke{\stroke{L_{\alpha}}{U_5}}{W(5)}=\stroke{L_{2\alpha+1}}{W(5)},
$$
which means \omyeqn{lkw} holds for $2\alpha+1$. Similarly, \omyeqn{lkw} holds for $2\alpha+1$ also implies that \omyeqn{lkw} holds for $2\alpha+2$. Inductively, \omyeqn{lkw} hold for each $\alpha \ge 0$. Hence
\beq
\mylabel{eq:k2a}
K_{2\alpha}=\stroke{L_{\alpha}}{W(5)}\in 5^{\alpha}X_a,
\eeq
and
\beq
\mylabel{eq:k2a+1}
K_{2\alpha+1}=\stroke{L_{\alpha+1}}{W(5)}\in 5^{\alpha+1}X_b,
\eeq
where
\begin{align*}
X_a:&=\left\{P_a\sum_{k=1}^{\infty}r(k)5^{[\frac{3k-3}{4}]}t_\omega^k,\quad \text{ r is discrete function}\right\},\\
X_b:&=\left\{P_b\sum_{k=2}^{\infty}r(k)5^{[\frac{3k-6}{4}]}t_\omega^k,\quad \text{r is discrete function}\right\}.
\end{align*}
\omyeqn{k2a} and \omyeqn{k2a+1} imply that
$$
a(5^{\alpha-1}n+\gamma_{\alpha-1})\equiv 0\pmod{5^{ \FL{\tfrac{1}{2}\alpha}}},
$$
so that  
\begin{align*}
a_\omega(5^{\alpha}n + \delta_\alpha)
+ a_\omega(5^{\alpha-2}n + \delta_{\alpha-2})
\equiv 0 \pmod{5^{ \FL{\tfrac{1}{2}\alpha }}}.
\end{align*}
This completes the proof of \omyeqn{wrmod5}. 
The proof of \omyeqn{wrmod7} is analogous.
\end{proof}

\appendix

\section{The Fundamental Relations for the rank parity function for powers of $7$}
\label{funr-7}

\begin{align*}
\text{Group \uppercase\expandafter{\romannumeral1}}&\\
&U_A(1)=8\cdot7^9t^6+176\cdot7^7t^5+16\cdot7^6t^3+1464\cdot7^5t^4+1199\cdot7^2t^2+9\cdot7^2t\\
&\qquad\quad+p_0(7^{11}t^7+23\cdot7^9t^6+206\cdot7^7t^5+125\cdot7^6t^4+242\cdot7^4t^3+23\cdot7^3t^2\\
&\qquad\quad+10t)+p_1(-7^{10}t^7-23\cdot7^8t^6-198\cdot7^6t^5-109\cdot7^5t^4-170\cdot7^3t^3\\
&\qquad\quad-9\cdot7^2t^2),\\
&U_A(t^{-1})=t,\\
&U_A(t^{-2})=-6\cdot7^2t^2-15\cdot7t+1+3\cdot7p_0t+p_1(2\cdot7^2t^2-t),\\
&U_A(t^{-3})=7^4t^3+7^4t^2+15\cdot7^2t-7-3\cdot7^2p_0t+p_1(-2\cdot7^3t^2+7t),\\
&U_A(t^{-4})=7^6t^4-44\cdot7^3t^2-660\cdot7t+44+132\cdot7p_0t+p_1(88\cdot7^2t^2-44t),\\
&U_A(t^{-5})=7^8t^5+5\cdot7^5t^2+75\cdot7^3t-5\cdot7^2-15\cdot7^3p_0t+p_1(-10\cdot7^4t^2+5\cdot7^2t),\\
&U_A(t^{-6})=7^{10}t^6+12\cdot7^9t^5+4\cdot7^9t^4+164\cdot7^6t^3+207\cdot7^4t^2-2207\cdot7^2t+23\cdot7^2\\
&\qquad\qquad+p_0(-2\cdot7^9t^5-30\cdot7^7t^4-22\cdot7^6t^3-38\cdot7^4t^2+477\cdot7^2t+2)\\
&\qquad\qquad+p_1(-2\cdot7^{10}t^6-32\cdot7^8t^5-26\cdot7^7t^4-8\cdot7^6t^3+292\cdot7^3t^2-23\cdot7^2t).\\
\text{Group \uppercase\expandafter{\romannumeral2}}&\\
&U_A(p_0t^{-1})=8\cdot7^{11}t^6+176\cdot7^9t^5+16\cdot7^8t^3+1464\cdot7^7t^4+8392\cdot7^3t^2+3072\cdot7t\\
&\qquad\qquad\quad+p_0(7^{13}t^7+23\cdot7^{11}t^6+206\cdot7^9t^5+125\cdot7^8t^4+242\cdot7^6t^3+23\cdot7^5t^2\\
&\qquad\qquad\quad+23\cdot7^5t^2+512t)+p_1(-7^{12}t^7-23\cdot7^{10}t^6-198\cdot7^8t^5\\
&\qquad\qquad\quad-109\cdot7^7t^4-170\cdot7^5t^3-3072\cdot7t^2),\\
&U_A(p_0t^{-2})=-6\cdot7^3t^2-90\cdot7t+6+p_0(7^2t^2+18\cdot7t)+p_1(7^3t^3+13\cdot7^2t^2-6t),\\
&U_A(p_0t^{-3})=-2\cdot7^3t^2-30\cdot7t+2+p_0(7^4t^3+6\cdot7t)\\
&\qquad\qquad\quad+p_1(7^5t^4+7^4t^3+4\cdot7^2t^2-2t),\\
&U_A(p_0t^{-4})=22\cdot7^3t^2+330\cdot7t-22+p_0(7^6t^4-66\cdot7t)\\
&\qquad\qquad\quad+p_1(7^7t^5+7^6t^4-44\cdot7^2t^2+22t),\\
&U_A(p_0t^{-5})=6\cdot7^9t^5+2\cdot7^9t^4+82\cdot7^6t^3+156\cdot7^4t^2-316\cdot7^2t+4\cdot7^2\\
&\qquad\qquad\quad+p_0(-6\cdot7^8t^5-15\cdot7^7t^4-11\cdot7^6t^3-19\cdot7^4t^2+81\cdot7^2t+1)\\
&\qquad\qquad\quad+p_1(-6\cdot7^9t^6-15\cdot7^8t^5-13\cdot7^7t^4-4\cdot7^6t^3+41\cdot7^3t^2-4\cdot7^2t),\\
&U_A(p_0t^{-6})=-6\cdot7^9t^5-2\cdot7^9t^4-82\cdot7^6t^3+50\cdot7^4t^2+3406\cdot7^2t-234\cdot7\\
&\qquad\qquad\quad+p_0(7^{10}t^6+7^9t^5+15\cdot7^7t^4+11\cdot7^6t^3+19\cdot7^4t^2-699\cdot7^2t-1)\\
&\qquad\qquad\quad+p_1(7^{11}t^7+2\cdot7^{10}t^6+16\cdot7^8t^5+13\cdot7^7t^4+4\cdot7^6t^3-453\cdot7^3t^2\\
&\qquad\qquad\quad+234\cdot7t),\\
&U_A(p_0t^{-7})=-510\cdot7^9t^5-170\cdot7^9t^4-6970\cdot7^6t^3-17446\cdot7^4t^2-35930\cdot7^2t\\
&\qquad\qquad\quad+258\cdot7^2+p_0(7^{12}t^7+85\cdot7^9t^5+1275\cdot7^7t^4+935\cdot7^6t^3\\
&\qquad\qquad\quad+1615\cdot7^4t^2+5673\cdot7^2t-85)+p_1(7^{13}t^8+7^12t^7+85\cdot7^{10}t^6\\
&\qquad\qquad\quad+1360\cdot7^8t^5+1105\cdot7^7t^4+340\cdot7^6t^3+4887\cdot7^3t^2-258\cdot7^2t).\\
\text{Group \uppercase\expandafter{\romannumeral3}}&\\
&U_A(p_1)=-8\cdot7^{10}t^6-176\cdot7^8t^5-16\cdot7^7t^3-1464\cdot7^6t^4-8392\cdot7^2t^2-3072t\\
&\qquad\qquad+p_0(-7^{12}t^7-23\cdot7^{10}t^6-206\cdot7^8t^5-125\cdot7^7t^4-242\cdot7^5t^3-23\cdot7^4t^2\\
&\qquad\qquad-73t)+p_1(7^{11}t^7+23\cdot7^9t^6+198\cdot7^7t^5+109\cdot7^6t^4+170\cdot7^4t^3\\
&\qquad\qquad+439\cdot7t^2),\\
&U_A(p_1t^{-1})=-7p_1t,\\
&U_A(p_1t^{-2})=6\cdot7^3t^2+90\cdot7t-6-18\cdot7p_0t+p_1(-7^3t^3-12\cdot7^2t^2+6t),\\
&U_A(p_1t^{-3})=-38\cdot7^3t^2-570\cdot7t+38+144\cdot7p_0t+p_1(-7^5t^4+76\cdot7^2t^2-38t),\\
&U_A(p_1t^{-4})=218\cdot7^3t^2+3270\cdot7t-218-654\cdot7p_0t\\
&\qquad\qquad\quad+p_1(-7^7t^5-436\cdot7^2t^2+248t),\\
&U_A(p_1t^{-5})=-6\cdot7^9t^5-2\cdot7^9t^4-82\cdot7^6t^3-340\cdot7^4t^2-2444\cdot7^2t+156\cdot7\\
&\qquad\qquad\quad+p_0(7^9t^5+15\cdot7^7t^4+11\cdot7^6t^3+19\cdot7^4t^2+471\cdot7^2t-1)\\
&\qquad\qquad\quad+p_1(6\cdot7^9t^6+16\cdot7^8t^5+13\cdot7^7t^4+4\cdot7^6t^3+327\cdot7^3t^2-156\cdot7t),\\
&U_A(p_1t^{-6})=21\cdot7^9t^5+18\cdot7^9t^4+738\cdot7^6t^3+46\cdot7^6t^2+9906\cdot7^2t-598\cdot7\\
&\qquad\qquad\quad+p_0(-9\cdot7^9t^5-135\cdot7^7t^4-99\cdot7^6t^3-171\cdot7^4t^2-1821\cdot7^2t+9)\\
&\qquad\qquad\quad+p_1(-7^{11}t^7-9\cdot7^{10}t^6-144\cdot7^8t^5-117\cdot7^7t^4-36\cdot7^6t^3\\
&\qquad\qquad\quad-1331\cdot7^3t^2+598\cdot7t).\\
\text{Group \uppercase\expandafter{\romannumeral4}}&\\
&U_B(t)=7^2t+7,\\
&U_B(1)=-7,\\
&U_B(t^{-1})=7^2+t^{-1},\\
&U_B(t^{-2})=7^5t^2+11\cdot7^3t-11\cdot7-11t^{-1},\\
&U_B(t^{-3})=7^7t^3-90\cdot7^3t-20\cdot7^2+90t^{-1},\\
&U_B(t^{-4})=-7^9t^4-38\cdot7^7t^3-38\cdot7^6t^2-19\cdot7^4t+209\cdot7^2-627t^{-1},\\
&U_B(t^{-5})=7^{11}t^5+46\cdot7^9t^4+874\cdot7^7t^3+874\cdot7^6t^2+1955\cdot7^4t-667\cdot7^2\\
&\qquad\qquad\quad+3795t^{-1}.\\
\text{Group \uppercase\expandafter{\romannumeral5}}&\\
&U_B(p_0t)=8\cdot7^{12}t^6+200\cdot7^{10}t^5+1984\cdot7^8t^4+9656\cdot7^6t^3+22896\cdot7^4t^2\\
&\qquad\qquad\quad+3144\cdot7^3t+632\cdot7+p_0(7^{14}t^7+26\cdot7^{12}t^6+274\cdot7^{10}t^5\\
&\qquad\qquad\quad+1464\cdot7^8t^4+4045\cdot7^6t^3+5172\cdot7^4t^2+2150\cdot7^2t+9\cdot7)\\
&\qquad\qquad\quad+p_1(-7^{13}t^7-26\cdot7^{11}t^6-38\cdot7^{10}t^5-1328\cdot7^7t^4\\
&\qquad\qquad\quad-459\cdot7^6t^3-3132\cdot7^3t^2-30\cdot7^2t),\\
&U_B(p_0)=6\cdot7p_0-7^2p_1t,\\
&U_B(p_0t^{-1})=-4\cdot7^3t+4\cdot7+4t^{-1}+p_0(7^3t-2\cdot7)+p_1(7^4t^2+7^2t-4),\\
&U_B(p_0t^{-2})=8\cdot7^5t^2+76\cdot7^3t+12\cdot7^2-20t^{-1}+p_0(-7^5t^2-10\cdot7^3t+3\cdot7)\\
&\qquad\qquad\quad+p_1(-7^6t^3-9\cdot7^4t^2+3\cdot7),\\
&U_B(p_0t^{-3})=8\cdot7^7t^3+48\cdot7^5t^2-12\cdot7^4t-44\cdot7^2+100t^{-1}+p_0(-7^7t^3-8\cdot7^5t^2\\
&\qquad\qquad\quad+8\cdot7^3t-3\cdot7^2)+p_1(-7^8t^4-7^7t^3-10\cdot7^4t^2-2\cdot7^4t-17\cdot7),\\
&U_B(p_0t^{-4})=-4\cdot7^9t^4-164\cdot7^7t^3-4\cdot7^8t^2-480\cdot7^4t-136\cdot7^2-424t^{-1}\\
&\qquad\qquad\quad+p_0(13\cdot7^7t^3+23\cdot7^6t^2+55\cdot7^4t+31\cdot7^2+t^{-1})\\
&\qquad\qquad\quad+p_1(9\cdot7^8t^4+13\cdot7^7t^3+46\cdot7^5t^2+135\cdot7^3t+94\cdot7),\\
&U_B(p_0t^{-5})=12\cdot7^{10}t^4+316\cdot7^8t^3+2540\cdot7^6t^2+7244\cdot7^4t+4092\cdot7^2+148\cdot7t^{-1}\\
&\qquad\qquad\quad+p_0(7^{11}t^5+13\cdot7^9t^4-71\cdot7^7t^3-241\cdot7^6t^2-761\cdot7^4t-286\cdot7^2\\
&\qquad\qquad\quad-13t^{-1})+p_1(7^{12}t^6+20\cdot7^{10}t^5+106\cdot7^8t^4-7^7t^3-206\cdot7^5t^2\\
&\qquad\qquad\quad-793\cdot7^3t-481\cdot7).\\
\text{Group \uppercase\expandafter{\romannumeral6}}&\\
&U_B(p_1t)=-7p_0,\\
&U_B(p_1)=7p_0+p_1(7^2t+1),\\
&U_B(p_1t^{-1})=4\cdot7^3t+12\cdot7-4t^{-1}-4\cdot7p_0+p_1(-7^4t^2-10\cdot7^2t-6),\\
&U_B(p_1t^{-2})=-8\cdot7^5t^2-100\cdot7^3t-36\cdot7^2+44t^{-1}+p_0(2\cdot7^5t^2+10\cdot7^3t+3\cdot7^2)\\
&\qquad\qquad\quad+p_1(7^6t^3+16\cdot7^4t^2+80\cdot7^2t+5\cdot7),\\
&U_B(p_1t^{-3})=-8\cdot7^7t^3+92\cdot7^4t+316\cdot7^2-356t^{-1}+p_0(-4\cdot7^6t^2-20\cdot7^4t\\
&\qquad\qquad\quad-17\cdot7^2)+p_1(7^8t^4-2\cdot7^6t^2-10\cdot7^4t-27\cdot7),\\
&U_B(p_1t^{-4})=4\cdot7^9t^4+228\cdot7^7t^3+228\cdot7^6t^2+152\cdot7^4t-240\cdot7^3+2432t^{-1}\\
&\qquad\qquad\quad+p_0(7^9t^4+15\cdot7^7t^3+7^8t^2+209\cdot7^4t+99\cdot7^2-t^{-1})\\
&\qquad\qquad\quad+p_1(-18\cdot7^8t^4-17\cdot7^7t^3+26\cdot7^5t^2+321\cdot7^3t+128\cdot7),\\
&U_B(p_1t^{-5})=-116\cdot7^9t^4-572\cdot7^8t^3-4604\cdot7^6t^2-11804\cdot7^4t+1444\cdot7^2\\
&\qquad\qquad\quad-2036\cdot7t^{-1}+p_0(-2\cdot7^{11}t^5-57\cdot7^9t^4-79\cdot7^8t^3-89\cdot7^7t^2\\
&\qquad\qquad\quad-1977\cdot7^4t-584\cdot7^2+3\cdot7t^{-1})+p_1(-7^{12}t^6-19\cdot7^{10}t^5+74\cdot7^8t^4\\
&\qquad\qquad\quad+209\cdot7^7t^3+470\cdot7^5t^2-709\cdot7^3t-465\cdot7).
\end{align*}

\section{The Fundamental Relations for the  crank parity function for powers of $7$}\label{funcr-7}

\begin{align*}
\text{Group \uppercase\expandafter{\romannumeral1}}&\\
&U^{(1)}(1)=2p_0+7p_1,\\
&U^{(1)}(t^{-1})=7^2t+p_0(-4\cdot 7^2t-4)+4\cdot 7p_1,\\
&U^{(1)}(t^{-2})=7^4t^2-4\cdot 7^2t+1+p_0(-4\cdot 7^4t^2+8\cdot 7^2t+2\cdot 7)+p_1(4\cdot 7^3t^2-13\cdot 7),\\
&U^{(1)}(t^{-3})=-8\cdot 7^4t^2+24\cdot 7^2t-2\cdot 7+p_0(2\cdot 7^6t^6+8\cdot 7^5t^2+8\cdot 7^3t-4\cdot 7)\\
&\qquad \qquad \quad+p_1(-2\cdot 7^5t^2-55\cdot 7^3 t+t^{-1}),\\
&U^{(1)}(t^{-4})=-7^8t^4-2\cdot 7^7t^3-2\cdot 7^5t^2-32\cdot 7^3t+20\cdot 7\\
&\qquad \qquad \quad+p_0(-4\cdot 7^7t^3-76\cdot 7^5t^2-144\cdot 7^3t-4\cdot 7^2)\\
&\qquad \qquad \quad+p_1(4\cdot 7^6t^2+74\cdot 7^4t+78\cdot 7^2-2\cdot 7t^{-1}),\\
&U^{(1)}(t^{-5})=-7^{10}t^5+4\cdot 7^8t^4+27\cdot 7^7t^3+116\cdot 7^5t^2+314\cdot 7^3t-27\cdot 7^2\\
&\qquad \qquad \quad+p_0(4\cdot 7^{10}t^5-64\cdot 7^8t^4+90\cdot 7^7t^3+712\cdot 7^5t^2+1504\cdot 7^3t+492\cdot 7)\\
&\qquad \qquad \quad+p_1(-4\cdot 7^9t^4-58\cdot 7^7t^3-78\cdot 7^6t^2-639\cdot 7^4t-974\cdot 7^2+135t^{-1}),\\
&U^{(1)}(t^{-6})=7^{12}t^6+44\cdot 7^{10}t^5+284\cdot 7^8t^4-118\cdot 7^7t^3-1348\cdot 7^5t^2-2740\cdot 7^3t\\
&\qquad \qquad \quad+1243\cdot 7+p_0(-88\cdot 7^{10}t^5-200\cdot 7^9t^4-204\cdot 7^8t^3-6568\cdot 7^5t^2\\
&\qquad \qquad \quad-13064\cdot 7^3t-682\cdot 7^2)+p_1(86\cdot 7^9t^4+176\cdot 7^8t^3+1130\cdot 7^6t^2\\
&\qquad \qquad \quad+734\cdot 7^5t+8679\cdot 7^2-22\cdot 7^2t^{-1}).\\
\text{Group \uppercase\expandafter{\romannumeral2}}&\\
&U^{(1)}(p_0)=7^{14}t^7+22\cdot 7^{12}t^6+190\cdot 7^{10}t^5+16\cdot 7^{10}t^4+1497\cdot 7^6t^3+1028\cdot 7^4t^2\\
&\qquad \qquad \quad+2\cdot 7^4t+p_0(8\cdot 7^{12}t^6+24\cdot 7^{11}t^5+192\cdot 7^9t^4+4888\cdot 7^6t^3\\
&\qquad \qquad \quad+7408\cdot 7^4t^2+2967\cdot 7^2t+20)+p_1(7^{15}t^7+30\cdot 7^{13}t^6+366\cdot 7^{11}t^5\\
&\qquad \qquad \quad+328\cdot 7^{10}t^4+1095\cdot 7^8t^3+12556\cdot 7^5t^2+7722\cdot 7^3t+680\cdot 7),\\
&U^{(1)}(p_0t^{-1})=5\cdot 72t+p_0(-7^4t^2-32\cdot 7^2t+62)+p_1(7^3t+80\cdot 7),\\
&U^{(1)}(p_0t^{-2})=7^5t^2+25\cdot 7^2t+p_0(7^6t^3-16\cdot 7^4t^2-96\cdot 7^2t-48)\\
&\qquad \qquad \qquad+p_1(-7^5t^2+17\cdot 7^3t+72\cdot 7),\\
&U^{(1)}(p_0t^{-3})=-7^6t^3-14\cdot 7^4t^2-4\cdot 7^3t+7+p_0(-7^8t^4-4\cdot 7^6t^3+96\cdot 7^4t^2\\
&\qquad \qquad \qquad+472\cdot 7^2t+4\cdot 7^2)+p_1(7^7t^3+3\cdot 7^5t^2-2\cdot 7^5t-51\cdot 7^2+6t^{-1}),\\
&U^{(1)}(p_0t^{-4})=-5\cdot 7^8t^4-9\cdot 7^7t^3-40\cdot 7^5t^2-66\cdot 7^3t-9\cdot 7\\
&\qquad \qquad \qquad+p_0(_7^{10}t^5+24\cdot 7^8t^4+16\cdot 7^7t^3-48\cdot 7^5t^2-362\cdot 7^3t-164\cdot 7)\\
&\qquad \qquad \qquad+p_1(-7^9t^4-23\cdot 7^7t^3-12\cdot 7^6t^2+62\cdot 7^4t+291\cdot 7^2-40t^{-1}),\\
&U^{(1)}(p_0t^{-5})=-5\cdot 7^{10}t^5-15\cdot 7^8t^4+60\cdot 7^7t^3+458\cdot 7^5t^2+786\cdot 7^3t+78\cdot 7\\
&\qquad \qquad \qquad+p_0(_7^{12}t^6+32\cdot 7^{10}t^5+44\cdot 7^9t^4+240\cdot 7^7t^3+916\cdot 7^5t^2\\
&\qquad \qquad \qquad+2416\cdot 7^3t+1004\cdot 7)+p_1(-7^{11}t^5-31\cdot 7^9t^4-272\cdot 7^7t^3\\
&\qquad \qquad \qquad-206\cdot 7^6t^2-110\cdot 7^5t-1810\cdot 7^2+216t^{-1}),\\
&U^{(1)}(p_0t^{-6})=6\cdot 7^{12}t^6+204\cdot 7^{10}t^5+240\cdot 7^9t^4+401\cdot 7^7t^3-2528\cdot 7^5t^2\\
&\qquad \qquad \qquad-5821\cdot 7^3t-96\cdot 7^2+p_0(-16\cdot 7^{12}t^6-584\cdot 7^{10}t^5-6576\cdot 7^8t^4\\
&\qquad \qquad \qquad-4885\cdot 7^7t^3-11832\cdot 7^5t^2-16512\cdot 7^3t-5736\cdot 7+t^{-1})\\
&\qquad \qquad \qquad+p_1(16\cdot 7^{11}t^5+562\cdot 7^9t^4+837\cdot 7^8t^3+82\cdot 7^8t^2+1213\cdot 7^5t\\
&\qquad \qquad \qquad+1556\cdot 7^3-136\cdot 7t^{-1}),\\
\text{Group \uppercase\expandafter{\romannumeral3}}&\\
&U^{(1)}(p_1t^{-1})=7^{15}t^7+22\cdot 7^{13}t^6+190\cdot 7^{11} t^5+16\cdot 7^{11} t^4+1497\cdot 7^7 t^3+1028\cdot 7^5 t^2\\
&\qquad \qquad \qquad+687\cdot 7^2 t+p_0(8\cdot 7^{13} t^6+24\cdot 7^{12} t^5+192\cdot 7^{10} t^4+4888\cdot 7^7 t^3\\
&\qquad \qquad \qquad+7408\cdot 7^5 t^2+20764\cdot 7^2 t+148)+p_1(30\cdot 7^{14} t^6+366\cdot 7^{12} t^5\\
&\qquad \qquad \qquad+328\cdot 7^{11} t^4+1095\cdot 7^9 t^3+12556\cdot 7^6 t^2+7722\cdot 7^4 t+4772\cdot 7),\\
&U^{(1)}(p_1t^{-2})=7^4t^2+36\cdot 7^2 t+p_0(-12\cdot 7^4 t^2-240\cdot 7^2 t+430)\\
&\qquad \qquad \qquad+p_1 (12\cdot 7^3 t+570\cdot 7),\\
&U^{(1)}(p_1t^{-3})=48\cdot 7^4 t^2+193\cdot 7^2 t+p_0 (10\cdot 7^6 t^3-64\cdot 7^4 t^2-552\cdot 7^2 t-44\cdot 7)\\
&\qquad \qquad \qquad+p_1 (-10\cdot 7^5 t^2+74\cdot 7^3 t+430\cdot 7+t^{-1}),\\
&U^{(1)}(p_1t^{-4})=-7^8 t^4-22\cdot 7^6 t^3-197\cdot 7^4 t^2-64\cdot 7^3 t+8\cdot 7\\
&\qquad \qquad \qquad+p_0 (-8\cdot 7^8 t^4-68\cdot 7^6 t^3+268\cdot 7^4 t^2+48\cdot 7^4 t+156\cdot 7)\\
&\qquad \qquad \qquad+p_1(8\cdot 7^7 t^3+61\cdot 7^5 t^2-314\cdot 7^3 t-270\cdot 7^2+34t^{-1}),\\
&U^{(1)}(p_1t^{-5})=-7^{10} t^5-36\cdot 7^8 t^4-44\cdot 7^7 t^3-144\cdot 7^5 t^2-138\cdot 7^3 t-76\cdot 7\\
&\qquad \qquad \qquad+p_0 (12\cdot 7^{10} t^5+264\cdot 7^8 t^4+234\cdot 7^7 t^3+248\cdot 7^5 t^2-1440\cdot 7^3 t\\
&\qquad \qquad \qquad-804\cdot 7)+p_1 (-12\cdot 7^9t^4-251\cdot 7^7 t^3-194\cdot 7^6 t^2-59\cdot 7^4 t\\
&\qquad \qquad \qquad+1342\cdot 7^2-225t^{-1}),\\
&U^{(1)}(p_1t^{-6})=7^{12}t^6+4\cdot 7^{10} t^5+197\cdot 7^8 t^4+414\cdot 7^7 t^3+2196\cdot 7^5 t^2+3124\cdot 7^3 t\\
&\qquad \qquad \qquad+657\cdot 7+p_0 (8\cdot 7^{12} t^6+160\cdot 7^{10} t^5+808\cdot 7^8 t^4+204\cdot 7^7 t^3\\
&\qquad \qquad \qquad+1144\cdot 7^5 t^2+8776\cdot 7^3t+638\cdot 7^2)+p_1 (-8\cdot 7^11 t^5-153\cdot 7^9 t^4\\
&\qquad \qquad \qquad-652\cdot 7^7 t^3-158\cdot 7^6 t^2-178\cdot 7^5 t-7619\cdot 7^2+1170t^{-1}),\\
&U^{(1)}(p_1t^{-7})=-7^{14} t^7-2\cdot 7^{12} t^6+563\cdot 7^{10} t^5+740\cdot 7^9 t^4+676\cdot 7^7 t^3\\
&\qquad \qquad \qquad-12972\cdot 7^5 t^2-26004\cdot 7^3 t-776\cdot 7^2+p_0(2\cdot 7^{14} t^7-88\cdot 7^{12} t^6\\
&\qquad \qquad \qquad-3180\cdot 7^{10} t^5-4496\cdot 7^9t^4-20086\cdot7^7 t^3-40840\cdot 7^5 t^2\\
&\qquad \qquad \qquad-60792\cdot 7^3 t-3340\cdot 7^2+10t^{-1})+p_1(-2\cdot 7^{13} t^6+13\cdot 7^{12} t^5\\
&\qquad \qquad \qquad+3084\cdot 7^9 t^4+3986\cdot 7^8 t^3+16260\cdot 7^6t^2+28524\cdot 7^4 t+43476\cdot 7^2\\
&\qquad \qquad \qquad-2\cdot 7^4t^{-1}).\\
\text{Group \uppercase\expandafter{\romannumeral4}}&\\
&U^{(0)}(1)=1,\\
&U^{(0)}(t^{-1})=-7t-4,\\
&U^{(0)}(t^{-2})=-7^3t^2+20,\\
&U^{(0)}(t^{-3})=-7^5t^3-88,\\
&U^{(0)}(t^{-4})=-7^7t^4+260,\\
&U^{(0)}(t^{-5})=-7^9t^5+68\cdot 7,\\
&U^{(0)}(t^{-6})=-7^11t^6-2392\cdot 7.\\
\text{Group \uppercase\expandafter{\romannumeral5}}&\\
&U^{(0)}(p_0)=7^9 t^5+11\cdot 7^7 t^4+38\cdot 7^5 t^3+31\cdot 7^3 t^2+6\cdot 7 t\\
&\qquad \qquad \qquad+p_0 (8\cdot 7^7 t^4+80\cdot 7^5 t^3+216\cdot 7^3 t^2+79\cdot 7t)\\
&\qquad \qquad \qquad+p_1 (7^{10} t^5+19\cdot 7^8 t^4+18\cdot 7^7 t^3+327\cdot 7^4 t^2+34\cdot 7^3 t+4),\\
&U^{(0)}(p_0t^{-1})=-7^2 t+1+p_0(-7^3 t^2-8\cdot 7t)+p_1(7^2t+7),\\
&U^{(0)}(p_0t^{-2})=-7^4t^2-7^3 t-12+p_0(-7^5 t^3-8\cdot 7^3 t^2)+p_1(7^4t^2+7^3t),\\
&U^{(0)}(p_0t^{-3})=2\cdot 7^6 t^3+24\cdot 7^4 t^2+83\cdot 7^2 t+108+p_0(6\cdot 7^7t^4+62\cdot 7^5t^3\\
&\qquad \qquad \qquad+27\cdot 7^4 t^2+10\cdot 7^2 t+1)+p_1(-6\cdot 7^6 t^3-8\cdot 7^5 t^2-3\cdot 7^4 t-4\cdot 7),\\
&U^{(0)}(p_0t^{-4})=-7^8 t^4-25\cdot 7^6 t^3-186\cdot 7^4t^2-498\cdot 7^2 t-120\cdot 7\\
&\qquad \qquad \qquad+p_0(-7^9 t^5-50\cdot 7^7 t^4-60\cdot 7^6 t^3-162\cdot 7^4 t^2-60\cdot 7^2 t-6)\\
&\qquad \qquad \qquad+p_1(7^8\cdot t^4+7^8 t^3+54\cdot 7^5 t^2+18\cdot 7^4 t+24\cdot 7),\\
&U^{(0)}(p_0t^{-5})=-7^{10}t^5-7^9 t^4+9\cdot 7^7 t^3+93\cdot 7^5 t^2+249\cdot 7^3 t+836\cdot 7\\
&\qquad \qquad \qquad+p_0 (-7^{11} t^6-8\cdot 7^9 t^5+3\cdot 7^9 t^4+30\cdot 7^7 t^3+81\cdot 7^5 t^2+30\cdot 7^3 t\\
&\qquad \qquad \qquad+3\cdot 7)+p_1(7^{10}t^5+7^9t^4-3\cdot 7^8 t^3-27\cdot 7^6t^2-9\cdot 7^5t-12\cdot 7^2),\\
&U^{(0)}(p_0t^{-6})=-7^{12}t^6-7^{11}t^5+6\cdot 7^7t^3+62\cdot 7^5t^2+166\cdot 7^3t-748\cdot 7^2\\
&\qquad \qquad \qquad+p_0(-7^{13}t^7-8\cdot 7^{11}t^6+2\cdot 7^9t^4+20\cdot 7^7t^3+54\cdot 7^5t^2+20\cdot 7^3t\\
&\qquad \qquad \qquad+2\cdot 7)+p_1(7^{12}t^6+7^{11}t^5-2\cdot 7^8t^3-18\cdot 7^6t^2-6\cdot 7^5t-8\cdot 7^2),\\
\text{Group \uppercase\expandafter{\romannumeral6}}&\\
&U^{(0)}(p_1t^{-1})=7^{10}t^5+11\cdot 7^8t^4+38\cdot 7^6t^3+31\cdot 7^4t^2+41\cdot 7t-1,\\
&\qquad \qquad \qquad+p_0(8\cdot 7^8t^4+80\cdot 7^6t^3+216\cdot 7^4t^2+552\cdot 7t)\\
&\qquad \qquad \qquad+p_1(7^{11}t^5+19\cdot 7^9t^4+\cdot 7^8t^3+\cdot 7^5t^2+\cdot 7^4t+29),\\
&U^{(0)}(p_1t^{-2})=-7^3 t^2-8\cdot 7^2 t+11+p_0 (-8\cdot 7^3 t^2-8\cdot 7^2 t)+p_1 (8\cdot 7^2 t+ 7^2),\\
&U^{(0)}(p_1t^{-3})=-7^5 t^3-8\cdot 7^4 t^2-7^4 t-96+p_0 (-8\cdot 7^5 t^3-8\cdot 7^4 t^2)\\
&\qquad \qquad \qquad+p_1 (8\cdot 7^4 t^2+ 7^4 t),\\
&U^{(0)}(p_1t^{-4})=-7^7 t^4+16\cdot 7^6 t^3+199\cdot 7^4 t^2+664\cdot 7^2 t+740\\
&\qquad \qquad \qquad+p_0 (48\cdot 7^7 t^4+72\cdot 7^6 t^3+216\cdot 7^4 t^2+80\cdot 7^2 t+8)\\
&\qquad \qquad \qquad+p_1 (-48\cdot 7^6 t^3-65\cdot 7^5 t^2-24\cdot 7^4 t-32\cdot 7),\\
&U^{(0)}(p_1t^{-5})=-7^9 t^5-8\cdot 7^8 t^4-31\cdot 7^7 t^3-248\cdot 7^5 t^2-664\cdot 7^3 t-740\cdot 7\\
&\qquad \qquad \qquad+p_0 (-8\cdot 7^9 t^5-64\cdot 7^8 t^4-80\cdot 7^7 t^3-216\cdot 7^5 t^2-80\cdot 7^3 t-8\cdot 7)\\
&\qquad \qquad \qquad+p_1 (8\cdot 7^8 t^4+9\cdot 7^8 t^3+72\cdot 7^6 t^2+24\cdot 7^5 t+32\cdot 7^2),\\
&U^{(0)}(p_1t^{-6})=-7^{11} t^6-8\cdot 7^{10} t^5-7^{10} t^4+120\cdot 7^7 t^3+1240\cdot 7^5 t^2+3320\cdot 7^3 t\\
&\qquad \qquad \qquad+4720\cdot 7+p_0 (-8\cdot 7^{11} t^6-8\cdot 7^{10} t^5+40\cdot 7^9 t^4+400\cdot 7^7 t^3\\
&\qquad \qquad \qquad+1080\cdot 7^5 t^2+400\cdot 7^3 t+40\cdot 7)+p_1 (8\cdot 7^{10} t^5+7^{10} t^4-40\cdot 7^8 t^3\\
&\qquad \qquad \qquad-360\cdot 7^6 t^2-120\cdot 7^5 t-160\cdot 7^2),\\
&U^{(0)}(p_1t^{-7})=6\cdot 7^{13} t^7+20\cdot 7^{12} t^6+272\cdot 7^{10} t^5+1893\cdot 7^8 t^4+435\cdot 7^7 t^3\\
&\qquad \qquad \qquad-2953\cdot 7^5 t^2-10620\cdot 7^3 t-27029\cdot 7+p_0 (-8\cdot 7^{13} t^7-8\cdot 7^{12} t^6\\
&\qquad \qquad \qquad-136\cdot 7^9 t^4-1360\cdot 7^7 t^3-3672\cdot 7^5 t^2-1360\cdot 7^3 t-136\cdot 7)\\
&\qquad \qquad \qquad+p_1 (8\cdot 7^{12} t^6+6\cdot 7^{11} t^5-19\cdot 7^9 t^4+818\cdot 7^7 t^3+1167\cdot 7^6 t^2\\
&\qquad \qquad \qquad+2798\cdot 7^4 t+11\cdot 7^4+t^{-1}).
\end{align*}
%%%HERE

%APPENDIX A%%%%%%%%%%%%%%%%%%%%%%%%%%%%%%%%%%%%%%%%%%%%%%%%%%%%%%%%%%%%%%%
%%\input appendixA.tex

%\subsection*{Acknowledgments}
%I would like to thank XXXXXXXXXXXXXXXXXXXXXX for his comments and suggestions.

%%%%%%%%%%%%%%%%%%%%%%%%%%%%%%%%%%%%%%%%%%%%%%%%%%%%%%%%%%%%%%%%%%%%%%%%%%%

%%%%%%%%%%%%%%%%%%%%%%%%%%%%%%%%%%%%%%%%%%%%%%%%%%%%%%%%%%%%%%%%%%%%%%%%%%%%%%%

%%%%%%%%%%%%%%%%%%%%%%%%%%%%%%%%%%%%%%%%%%%%%%%%%%%%%%%%%%%%%%%%%%%%%%%%%%%%%%%


\begin{thebibliography}{10}
%%\input bibitems.txt
\bibitem{Ah-Du19}
Scott Ahlgren and Alexander Dunn, \emph{Maass forms and the mock theta function
  {$f(q)$}}, Math. Ann. \textbf{374} (2019), no.~3-4, 1681--1718. \MR{3985121}
\bibitem{An1984mem}
George~E. Andrews, \emph{Generalized {F}robenius partitions}, Mem. Amer. Math.
  Soc. \textbf{49} (1984), no.~301, iv+44. \MR{743546}
\bibitem{An66a}
George~E. Andrews, \emph{On the theorems of {W}atson and {D}ragonette for
  {R}amanujan's mock theta functions}, Amer. J. Math. \textbf{88} (1966),
  454--490. \MR{200258}
\bibitem{An-Ga88}
George~E. Andrews and F.~G. Garvan, \emph{Dyson's crank of a partition}, Bull.
  Amer. Math. Soc. (N.S.) \textbf{18} (1988), no.~2, 167--171. \MR{929094}
\bibitem{At-Le70}
A.~O.~L. Atkin and J.~Lehner, \emph{Hecke operators on {$\Gamma _{0}(m)$}},
  Math. Ann. \textbf{185} (1970), 134--160. \MR{0268123}
\bibitem{At-OB}
A.~O.~L. Atkin and J.~N. O'Brien, \emph{Some properties of {$p(n)$} and
  {$c(n)$} modulo powers of {$13$}}, Trans. Amer. Math. Soc. \textbf{126}
  (1967), 442--459. \MR{214540}
\bibitem{At-SwD}
A.~O.~L. Atkin and P.~Swinnerton-Dyer, \emph{Some properties of partitions},
  Proc. London Math. Soc. (3) \textbf{4} (1954), 84--106. \MR{0060535}
\bibitem{Be-RNIII}

Bruce~C. Berndt, \emph{Ramanujan's notebooks. {P}art {III}}, Springer-Verlag,
  New York, 1991. \MR{1117903}
\bibitem{Bi89}
Anthony J.~F. Biagioli, \emph{A proof of some identities of {R}amanujan using
  modular forms}, Glasgow Math. J. \textbf{31} (1989), no.~3, 271--295.
  \MR{1021804}
\bibitem{Br-On06}
Kathrin Bringmann and Ken Ono, \emph{The {$f(q)$} mock theta function
  conjecture and partition ranks}, Invent. Math. \textbf{165} (2006), no.~2,
  243--266. \MR{2231957}





  \bibitem{Ch-La98}
Heng Huat Chan and  Mong Lung Lang,
\emph{Ramanujan's modular equations and Atkin-Lehner involutions.}
Israel J. Math.\textbf{ 103} (1998), 1–-16. \MR{1613532}
\bibitem{Ch-To10}
Heng Huat Chan and  Pee Choon Toh,
\emph{New analogues of Ramanujan's partition identities.}
J. Number Theory \textbf{130} (2010), no. 9, 1898–-1913. \MR{2653203}

\bibitem{Ch05}
Song Heng Chan, \emph{Generalized Lambert series identities}, Proc. Lond. Math. Soc. (3), \textbf{91} (3) (2005), pp. 598--622. \MR{2180457}
\bibitem{Ch-Ch-Ga20}
Dandan Chen, Rong Chen, and Frank Garvan, \emph{Congruences modulo powers of
  $5$ for the rank parity function}, Hardy-Ramanujan J. \textbf{43} (2020), 24--45. \MR{4298484}

\bibitem{Ch-Ka-Lo}
Dohoon Choi, Soon-Yi Kang, and Jeremy Lovejoy, \emph{Partitions weighted by the
  parity of the crank}, J. Combin. Theory Ser. A \textbf{116} (2009), no.~5,
  1034--1046. \MR{2522417}

\bibitem{Ch-La04}
Kok~Seng Chua and Mong~Lung Lang, \emph{Congruence subgroups associated to the
  monster}, Experiment. Math. \textbf{13} (2004), no.~3, 343--360. \MR{2103332}

\bibitem{Dr52}
Leila~A. Dragonette, \emph{Some asymptotic formulae for the mock theta series
  of {R}amanujan}, Trans. Amer. Math. Soc. \textbf{72} (1952), 474--500.
  \MR{49927}
\bibitem{Dy44}
F.~J. Dyson, \emph{Some guesses in the theory of partitions}, Eureka (1944),
  no.~8, 10--15. \MR{3077150}
\bibitem{Fr13}
 Karl-Heinz Fricke, \emph{Analytische und p-adische Aspekte von klassischen und Mock-Modulformen}, Ph.D. thesis, Rheinis-chen Friedrich-Wilhelms-Universit$\ddot{a}$t Bonn, 2013, pp.307. https:{//d-nb.info/1045276588/34}
\bibitem{Ga88b}
F.~G. Garvan, \emph{New combinatorial interpretations of {R}amanujan's
  partition congruences mod {$5,7$} and {$11$}}, Trans. Amer. Math. Soc.
  \textbf{305} (1988), no.~1, 47--77. \MR{920146}

\bibitem{Ga19a}
F.~G. Garvan, \emph{Transformation properties for {D}yson's rank function},
  Trans. Amer. Math. Soc. \textbf{371} (2019), no.~1, 199--248. \MR{3885143}

\bibitem{Li75}
G\'{e}rard Ligozat, \emph{Courbes modulaires de genre {$1$}}, Soci\'{e}t\'{e}
  Math\'{e}matique de France, Paris, 1975, Bull. Soc. Math. France, M\'{e}m.
  43, Suppl\'{e}ment au Bull. Soc. Math. France Tome 103, no. 3. \MR{0417060}
\bibitem{Ma09}
Robert~S. Maier, \emph{On rationally parametrized modular equations}, J.
  Ramanujan Math. Soc. \textbf{24} (2009), no.~1, 1--73. \MR{2514149}

\bibitem{Ne59}
Morris Newman, \emph{Construction and application of a class of modular
  functions. {II}}, Proc. London Math. Soc. (3) \textbf{9} (1959), 373--387.
  \MR{0107629}
\bibitem{Pa-Ra12}
Peter Paule and Cristian-Silviu Radu, \emph{The {A}ndrews-{S}ellers family of
  partition congruences}, Adv. Math. \textbf{230} (2012), no.~3, 819--838.
  \MR{2921161}
\bibitem{Ra}
Robert~A. Rankin, \emph{Modular forms and functions}, Cambridge University
  Press, Cambridge, 1977. \MR{0498390}
  \bibitem{Ra77}
 Michael J. Razar,
\emph{Modular forms for $\Gamma_0(N)$  and Dirichlet series.}
Trans. Amer. Math. Soc. \textbf{231} (1977), no. 2, 489-–495. \MR{444576}
\bibitem{Ro94}
Sinai Robins, \emph{Generalized {D}edekind {$\eta$}-products}, The {R}ademacher
  legacy to mathematics ({U}niversity {P}ark, {PA}, 1992), Contemp. Math., vol.
  166, Amer. Math. Soc., Providence, RI, 1994, pp.~119--128. \MR{1284055}
\bibitem{Se1994}
James Sellers, \emph{Congruences involving {$F$}-partition functions},
  Internat. J. Math. Math. Sci. \textbf{17} (1994), no.~1, 187--188.
  \MR{1255240}
\bibitem{Wa36a}
G.~N. Watson, \emph{The {F}inal {P}roblem : {A}n {A}ccount of the {M}ock
  {T}heta {F}unctions}, J. London Math. Soc. \textbf{11} (1936), no.~1, 55--80.
  \MR{1573993}
\bibitem{Wa38}
G.~N. Watson, \emph{Ramanujans {V}ermutung \"{u}ber {Z}erf\"{a}llungszahlen},
  J. Reine Angew. Math. \textbf{179} (1938), 97--128. \MR{1581588}
  \bibitem{Zw01}
S. P. Zwegers, Mock $\theta$-functions and real analytic modular forms, $q$-Series with Applications to Combinatorics, Number Theory, and Physics, Univ. of Illinois at Urbana-Champaign, October 26–28, 2000, Contemporary Mathematics, vol. \textbf{291}, Amer. Math. Soc., 2001, pp. 269–-277. \MR{1874536}
\end{thebibliography}
\end{document}